\newcommand{\RR}{{\mathbb{R}}}
\newcommand{\NN}{{\mathbb{N}}}
\newcommand{\CC}{{\mathbb{C}}}
\newcommand{\asto}{\overset{\rm a.s.}{\longrightarrow}}
\newcommand{\EE}{{\rm E}}
\DeclareMathOperator{\tr}{tr}
\DeclareMathOperator{\argmin}{argmin}
\DeclareMathOperator{\diag}{diag}
\newcounter{ctheorem}
\newtheorem{theorem}[ctheorem]{Theorem}
\newcounter{cassumption}
\newtheorem{assumption}[cassumption]{Assumption}
\newproof{proof}{Proof}
\newcounter{cproposition}
\newtheorem{proposition}[cproposition]{Proposition}
\newcounter{ccorollary}
\newtheorem{corollary}[ccorollary]{Corollary}
\newcounter{clemma}
\newtheorem{lemma}[clemma]{Lemma}
\journal{Journal of Multivariate Analysis}
\begin{document}

\begin{frontmatter}

%\title{The Large Dimensional Regime of Robust Shrinkage Covariance Matrix Estimators\tnoteref{t1}}
\title{Large Dimensional Analysis and Optimization of Robust Shrinkage Covariance Matrix Estimators\tnoteref{t1}}
\tnotetext[t1]{ Couillet's work is supported by the ERC MORE EC--120133. McKay's work is supported by the Hong Kong Research Grants Council under grant number 616713.}

%\author[Couillet, Pascal, and Silverstein]{Romain~Couillet, Fr\'ed\'eric Pascal, and Jack W. Silverstein.}

%\author{Romain~Couillet\inst{1} \and Fr\'ed\'eric Pascal\inst{2} \and Jack W. Silverstein\inst{3}
%\thanks{Silverstein's work is supported by the U.S. Army Research Office, Grant W911NF-09-1-0266. Couillet's work is supported by the ERC MORE EC--120133.}
%}

\author[supelec]{Romain Couillet}
\ead{romain.couillet@supelec.fr}
\address[supelec]{Telecommunication department, Sup\'elec, Gif sur Yvette, France}

\author[HKUST]{Matthew McKay}
\ead{eemckay@ust.hk}
\address[HKUST]{Hong Kong University of Science and Technology}

%\author[sondra]{Fr\'ed\'eric Pascal}
%\ead{frederic.pascal@supelec.fr}
%\address[sondra]{SONDRA Laboratory, Sup\'elec, Gif sur Yvette, France}
%
%\author[ncsu]{Jack W. Silverstein}
%\ead{jack@ncsu.edu}
%\address[ncsu]{Department of Mathematics, North Carolina State University, NC, USA}

%\thanks{Couillet is with Telecommunication department, Sup\'elec, Gif sur Yvette, France; Pascal is with SONDRA Laboratory, Sup\'elec, Gif sur Yvette, France; Silverstein is with Department of Mathematics, North Carolina State University, NC, USA. Silverstein's work is supported by the U.S. Army Research Office, Grant W911NF-09-1-0266. Couillet's work is supported by the ERC MORE EC--120133.}
%\institute{Telecommunication department, Sup\'elec, Gif sur Yvette, France, \email{romain.couillet@supelec.fr} \and SONDRA Laboratory, Sup\'elec, Gif sur Yvette, France, \email{frederic.pascal@supelec.fr} \and Department of Mathematics, North Carolina State University, NC, USA, \email{jack@nscu.edu}}

%\date{Received: date / Revised version: date}

%\titlerunning{The Random Matrix Regime of Maronna's M-estimator}
%\authorrunning{Couillet, Pascal, and Silverstein}

\begin{abstract}
	This article studies two regularized robust estimators of scatter matrices proposed (and proved to be well defined) in parallel in \citep{CHE11} and \citep{PAS13}, based on Tyler's robust M-estimator \citep{TYL87} and on Ledoit and Wolf's shrinkage covariance matrix estimator \citep{LED04}. These hybrid estimators have the advantage of conveying (i) robustness to outliers or impulsive samples and (ii) small sample size adequacy to the classical sample covariance matrix estimator. We consider here the case of i.i.d. elliptical zero mean samples in the regime where both sample and population sizes are large. We demonstrate that, under this setting, the estimators under study asymptotically behave similar to well-understood random matrix models. This characterization allows us to derive optimal shrinkage strategies to estimate the population scatter matrix, improving significantly upon the empirical shrinkage method proposed in \citep{CHE11}.
\end{abstract}	

\begin{keyword}
random matrix theory \sep robust estimation \sep linear shrinkage.
\end{keyword}

\end{frontmatter}

%\maketitle

\section{Introduction}

Many scientific domains customarily deal with (possibly small) sets of large dimensional data samples from which statistical inference is performed. This is in particular the case in financial data analysis where few stationary monthly observations of numerous stock indexes are used to estimate the joint covariance matrix of the stock returns \citep{POT00,LED03,RUB12}, bioinformatics where clustering of genes is obtained based on gene sequences sampled from a small population \citep{SCH05}, computational immunology where correlations among mutations in viral strains are estimated from sampled viral sequences and used as a basis of novel vaccine design \citep{DAH11,QUA13}, psychology where the covariance matrix of multiple psychological traits is estimated from data collected on a group of tested individuals \citep{STE80}, or electrical engineering at large where signal samples extracted from a possibly short time window are used to retrieve parameters of the signal \citep{SCH91}. In many such cases, the number $n$ of independent data samples $x_1,\ldots,x_n\in \CC^N$ (or $\RR^N$) may not be large compared to the size $N$ of the population, suggesting that the empirical sample covariance matrix $\bar{C}_N=\frac1n\sum_{i=1}^n(x_i-\bar{x})(x_i-\bar{x})^*$, $\bar{x}=\frac1n\sum_{i=1}^nx_i$, is a poor estimate for $C_N=\EE[(x_1-\EE[x_1])(x_1-\EE[x_1])^*]$. Several solutions have been proposed to work around this problem. If the end application is not to retrieve $C_N$ but some metric of it, recent works on random matrix theory showed that replacing $C_N$ in the metric by $\bar{C}_N$ often leads to a biased estimate of the metric \citep{MES08}, but that this estimate can be corrected by an improved estimation of the metric itself via the samples $x_1,\ldots,x_n$ \citep{MES08b}. However, when the object under interest is $C_N$ itself and $N\simeq n$, there is little hope to retrieve any consistent estimate of $C_N$. A popular alternative proposed originally in \citep{LED04} is to ``shrink'' $\bar{C}_N$, i.e., consider instead $\bar{C}_N(\rho)=(1-\rho)\bar{C}_N+\rho I_N$ for an appropriate $\rho\in [0,1]$ that minimizes the average distance $\EE[\tr( (\bar{C}_N(\rho)-C_N)^2)]$. The interest of $\rho$ here is to give more or less weight to $\bar{C}_N$ depending on the relevance of the $n$ samples, so that in particular $\rho$ is better chosen close to zero when $n$ is large and close to one when $n$ is small.

In addition to the problem of scarcity of samples, it is often the case that outliers are present among the set of samples. These outliers may arise from erroneous or inconsistent data (e.g., individuals under psychological or biological tests incorrectly identified to fit the test pattern), or from the corruption of some samples by external events (e.g., interference by ambient electromagnetic noise in signal processing). These outliers, if not correctly handled, may further corrupt the statistical inference and in particular the estimation of $C_N$. The field of robust estimation intends to deal with this problem \citep{HUB81,MAR06} by proposing estimators that have the joint capability to naturally attenuate the effect of outliers \citep{HUB64} as well as to appropriately handle samples of an impulsive nature \citep{TYL87}, e.g., elliptically distributed data. A common denominator of such estimators is their belonging to the class of M-estimators, therefore taking the form of the solution to an implicit equation. This poses important problems of analysis in small $N,n$ dimensions, resulting mostly in only asymptotic results in the regime $N$ fixed and $n\to\infty$ \citep{MAR76,KEN91}. This regime is however inconsistent with the present scenario of scarce data where $N\simeq n$. Nonetheless, recent works based on random matrix theory have shown that a certain family of such robust covariance matrix estimators asymptotically behave as $N,n\to\infty$ and $N/n\to c\in(0,1)$ similar to classical random matrices taking (almost) explicit forms. Such observations were made for the class of Maronna's M-estimators of scatter \citep{MAR76} for sample vectors whose independent entries can contain outliers \citep{COU13} and for elliptically distributed samples \citep{COU13b}, as well as for Tyler's M-estimator \citep{TYL87} in \citep{ZHA14}.

In this article, we study two hybrid robust shrinkage covariance matrix estimates $\hat{C}_N(\rho)$ (hereafter referred to as the Abramovich--Pascal estimate) and $\check{C}_N(\rho)$ (hereafter referred to as the Chen estimate) proposed in parallel in \citep{ABR07,PAS13}\footnote{To the authors' knowledge, the first instance of the estimator dates back to \citep{ABR07} although the non-obvious proof of $\hat{C}_N(\rho)$ being well-defined is only found later in \citep{PAS13}.} and in \citep{CHE11}, respectively. Both matrices, whose definition is introduced in Section~\ref{sec:results} below, are empirically built upon Tyler's M-estimate \citep{TYL87} originally designed to cope with elliptical samples whose distribution is unknown to the experimenter and upon the Ledoit--Wolf shrinkage estimator \citep{LED04}. This allows for an improved degree of freedom for approximating the population covariance matrix and importantly allows for $N>n$, which Maronna's and Tyler's estimators do not. In \citep{PAS13} and \citep{CHE11}, $\hat{C}_N(\rho)$ and $\check{C}_N(\rho)$ were proved to be well-defined as the unique solutions to their defining fixed-point matrices. However, little is known of their performance as estimators of $C_N$ in the regime $N\simeq n$ of interest here. Some progress in this direction was made in \citep{CHE11} but this work does not manage to solve the optimal shrinkage problem consisting of finding $\rho$ such that $\EE[\tr((\check{C}_N(\rho)-C_N)^2)]$ is minimized and resorts to solving an approximate problem instead. 

The present article studies the matrices $\hat{C}_N(\rho)$ and $\check{C}_N(\rho)$ from a random matrix approach, i.e., in the regime where $N,n\to\infty$ with $N/n\to c\in(0,\infty)$, and under the assumption of the absence of outliers. Our main results are as follows:
\begin{itemize}
	\item we show that, under the aforementioned setting, both $\hat{C}_N(\rho)$ and $\check{C}_N(\rho)$ asymptotically behave similar to well-known random matrix models and prove in particular that both have a well-identified limiting spectral distribution;
	\item we prove that, up to a change in the variable $\rho$, the matrices $\check{C}_N(\rho)$ and $\hat{C}_N(\rho)/(\frac1N\tr \hat{C}_N(\rho))$ are essentially the same for $N,n$ large, implying that both achieve the same optimal shrinkage performance;
	\item we determine the optimal shrinkage parameters $\hat\rho^\star$ and $\check\rho^\star$ that minimize the almost sure limits $\lim_N\frac1N\tr[(\hat{C}_N(\rho)/(\frac1N\tr \hat{C}_N(\rho))-C_N)]^2$ and $\lim_N\frac1N\tr[(\check{C}_N(\rho)-C_N)]^2$, respectively, both limits being the same. We then propose consistent estimates $\hat\rho_N$ and $\check\rho_N$ for $\hat\rho^\star$ and $\check\rho^\star$ which achieve the same limiting performance. We finally show by simulations that a significant gain is obtained using $\hat\rho^\star$ (or $\hat\rho_N$) and $\check\rho^\star$ (or $\check\rho_N$) compared to the solution $\check\rho_O$ of the approximate problem developed in \citep{CHE11}.
\end{itemize}
In practice, these results allow for a proper use of $\hat{C}_N(\rho)$ and $\check{C}_N(\rho)$ in anticipation of the absence of outliers. In the presence of outliers, it is then expected that both Abramovich--Pascal and Chen estimates will exhibit robustness properties that their asymptotic random matrix equivalents will not. Note in particular that, although $\hat{C}_N(\rho)$ and $\check{C}_N(\rho)$ are shown to be asymptotically equivalent in the absence of outliers, it is not clear at this point whether one of the two estimates will show better performance in the presence of outliers. The study of this interesting scenario is left to future work. 

The remainder of the article is structured as follows. In Section~\ref{sec:results}, we introduce our main results on the large $N,n$ behavior of the matrices $\hat{C}_N(\rho)$ and $\check{C}_N(\rho)$. In Section~\ref{sec:shrink}, we develop the optimal shrinkage analysis, providing in particular asymptotically optimal empirical shrinkage strategies. Concluding remarks are provided in Section~\ref{sec:conclusion}. All proofs of the results of Section~\ref{sec:results} and Section~\ref{sec:shrink} are then presented in Section~\ref{sec:proofs}.

{\it General notations:} The superscript $(\cdot)^*$ stands for Hermitian transpose in the complex case or transpose in the real case. The notation $\Vert\cdot\Vert$ stands for the spectral norm for matrices and the Euclidean norm for vectors. The Dirac measure at point $x$ is denoted ${\bm\delta}_x$. The ordered eigenvalues of a Hermitian (or symmetric) matrix $X$ of size $N\times N$ are denoted $\lambda_1(X)\leq \ldots\leq \lambda_N(X)$. For $\ell>0$ and a positive and positively supported measure $\nu$, we define $M_{\nu,\ell}=\int t^\ell \nu(dt)$ (may be infinite). The arrow ``$\asto$'' designates almost sure convergence. The statement $X \triangleq Y$ defines the new notation $X$ as being equal to $Y$.

\section{Main results}
\label{sec:results}

We start by introducing the main assumptions of the data model under study. We consider $n$ sample vectors $x_1,\ldots,x_n\in\CC^N$ (or $\RR^N$) having the following characteristics.

\begin{assumption}[Growth rate]
	\label{ass:c}
	Denoting $c_N=N/n$, $c_N\to c\in (0,\infty)$ as $N\to\infty$.
\end{assumption}

\begin{assumption}[Population model]
	\label{ass:x}
	The vectors $x_1,\ldots,x_n\in\CC^N$ (or $\RR^N$) are independent with
	\begin{enumerate}[label=\alph*.]
		\item \label{item:x} $x_i=\sqrt{\tau}_iA_Ny_i$, where $y_i\in\CC^{\bar N}$ (or $\RR^{\bar N}$), $\bar{N}\geq N$, is a random zero mean unitarily (or orthogonally) invariant vector with norm $\Vert y_i\Vert^2=\bar N$, $A_N\in\CC^{N\times \bar{N}}$ is deterministic, and $\tau_1,\ldots,\tau_n$ is a collection of positive scalars. We shall denote $z_i=A_Ny_i$.
		\item \label{item:C} $C_N\triangleq A_NA_N^*$ is nonnegative definite, with trace $\frac1N\tr C_N=1$ and spectral norm satisfying $\limsup_N \Vert C_N\Vert <\infty$.
		\item \label{item:nu} $\nu_N\triangleq \frac1N\sum_{i=1}^N {\bm\delta}_{\lambda_i(C_N)}$ satisfies $\nu_N\to \nu$ weakly with $\nu\neq {\bm\delta}_0$ almost everywhere.
	\end{enumerate}
\end{assumption}

Since all considerations to come are equally valid over $\CC$ or $\RR$, we will consider by default that $x_1,\ldots,x_n\in\CC^N$. As the analysis will show, the positive scalars $\tau_i$ have no impact on the robust covariance estimates; with this definition, the distribution of the vectors $x_i$ contains in particular the class of elliptical distributions. Note that the assumption that $y_i$ is zero mean unitarily invariant with norm $\bar N$ is equivalent to saying that $y_i=\sqrt{\bar{N}}\frac{\tilde{y}_i}{\Vert \tilde{y}_i\Vert}$ with $\tilde{y}_i\in\CC^{\bar N}$ standard Gaussian. This, along with $A_N\in\CC^{N\times \bar{N}}$ and $\limsup_N \Vert C_N\Vert <\infty$, implies in particular that $\Vert x_i\Vert^2$ is of order $N$. The assumption that $\nu\neq {\bm\delta}_0$ almost everywhere avoids the degenerate scenario where an overwhelming majority of the eigenvalues of $C_N$ tend to zero, whose practical interest is quite limited. Finally note that the constraint $\frac1N\tr C_N=1$ is inconsequential and in fact defines uniquely both terms in the product $\tau_iC_N$.

The following two theorems introduce the robust shrinkage estimators $\hat{C}_N(\rho)$ and $\check{C}_N(\rho)$, and constitute the main technical results of this article.
\begin{theorem}[Abramovich--Pascal Estimate]
	\label{th:Chitour}
	Let Assumptions~\ref{ass:c} and \ref{ass:x} hold. For $\varepsilon\in (0,\min\{1,c^{-1}\})$, define $\hat{\mathcal R}_\varepsilon=[\varepsilon+\max\{0,1-c^{-1}\},1]$. For each $\rho\in (\max\{0,1-c_N^{-1}\},1]$, let $\hat{C}_N(\rho)$ be the unique solution to
	\begin{align*}
		\hat{C}_N(\rho) &= (1-\rho) \frac1n\sum_{i=1}^n \frac{x_ix_i^*}{\frac1Nx_i^*\hat{C}_N(\rho)^{-1}x_i} + \rho I_N.
	\end{align*}
	Then, as $N\to\infty$, 
	\begin{align*}
		\sup_{\rho\in \hat{\mathcal R}_\varepsilon}\left\Vert \hat{C}_N(\rho) - \hat{S}_N(\rho) \right\Vert \asto 0
	\end{align*}
	where
	\begin{align*}
		\hat{S}_N(\rho) &=  \frac1{\hat{\gamma}(\rho)} \frac{1-\rho}{1-(1-\rho)c} \frac1n\sum_{i=1}^n z_iz_i^* + \rho I_N
	\end{align*}
	and ${\hat{\gamma}}(\rho)$ is the unique positive solution to the equation in $\hat{\gamma}$
	\begin{align*}
		1 &= \int \frac{t}{{\hat{\gamma}}\rho + (1-\rho)t}\nu(dt).
	\end{align*}
	Moreover, the function $\rho\mapsto\hat\gamma(\rho)$ thus defined is continuous on $(0,1]$.
\end{theorem}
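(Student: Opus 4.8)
\emph{Strategy and reductions.} The plan is to show directly that, uniformly in $\rho\in\hat{\mathcal R}_\varepsilon$, the exact fixed point $\hat C_N(\rho)$ is asymptotically of the form $\tfrac{1-\rho}{\gamma_N(\rho)}\tfrac1n\sum_i z_iz_i^*+\rho I_N$ for a suitable scalar $\gamma_N(\rho)$, and then to identify $\lim_N\gamma_N(\rho)$ by a scalar self-consistency computation matching the definition of $\hat\gamma$. First I would note that the scalars $\tau_i$ cancel in each ratio $x_ix_i^*/(\tfrac1N x_i^*\hat C_N(\rho)^{-1}x_i)$, so the equation is unchanged upon replacing $x_i$ by $z_i$; writing $e_i(\rho)\triangleq\tfrac1N z_i^*\hat C_N(\rho)^{-1}z_i$, we have $\hat C_N(\rho)=(1-\rho)\tfrac1n\sum_i e_i(\rho)^{-1}z_iz_i^*+\rho I_N$. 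The representation $y_i=\sqrt{\bar N}\,\tilde y_i/\Vert\tilde y_i\Vert$ with $\tilde y_i$ standard Gaussian lets one replace $z_i$ by $C_N^{1/2}g_i$ up to a multiplicative factor tending to $1$ ($g_i$ standard Gaussian in $\CC^N$), reducing to a standard sample-covariance model and yielding, a.s., $\tfrac1N\Vert z_i\Vert^2\to1$ uniformly in $i$ and $\limsup_N\Vert\tfrac1n\sum_i z_iz_i^*\Vert<\infty$. Two exact facts will be used throughout: $\hat C_N(\rho)\succeq\rho I_N$, whence $\Vert\hat C_N(\rho)^{-1}\Vert\le\rho^{-1}\le\varepsilon^{-1}$ on $\hat{\mathcal R}_\varepsilon$; and, on right-multiplying the defining equation by $\hat C_N(\rho)^{-1}$ and taking $\tfrac1N\tr$ (each of the $n$ summands then contributing $1$), the identity $\tfrac1N\tr\hat C_N(\rho)^{-1}=1$. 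Also, $\rho>\max\{0,1-c_N^{-1}\}$ is precisely $1-(1-\rho)c_N>0$, and on $\hat{\mathcal R}_\varepsilon$ it stays bounded away from $0$. The one genuinely hard input---which I expect to be the \textbf{main obstacle}---is the matching a.s.\ \emph{upper} bound $\limsup_N\sup_{\rho\in\hat{\mathcal R}_\varepsilon}\Vert\hat C_N(\rho)\Vert<\infty$, equivalently $\liminf_N\inf_{\rho,i}e_i(\rho)>0$ (the two being linked via $e_i(\rho)\ge\tfrac1N\Vert z_i\Vert^2/\Vert\hat C_N(\rho)\Vert$); establishing it requires combining the isotropy of the $z_i$, the assumption $\nu\neq{\bm\delta}_0$, and the trace identity to preclude a mass of eigenvalues of $\hat C_N(\rho)$ from escaping to infinity, and then making this uniform over $\hat{\mathcal R}_\varepsilon$. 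From here on I treat these bounds as granted.

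\emph{Concentration of the $e_i$.} Setting $\hat C_{(i)}(\rho)\triangleq\hat C_N(\rho)-\tfrac{1-\rho}{n\,e_i(\rho)}z_iz_i^*\succeq\rho I_N$, the Sherman--Morrison identity gives the \emph{exact} relation $e_i(\rho)=\bigl(1-(1-\rho)c_N\bigr)f_i(\rho)$ with $f_i(\rho)\triangleq\tfrac1N z_i^*\hat C_{(i)}(\rho)^{-1}z_i$. To control $f_i(\rho)$ one must decouple it from $z_i$: I would run the standard order-statistics argument on the sorted values $e_{(1)}(\rho)\le\cdots\le e_{(n)}(\rho)$, using the two-sided bound $(1-\rho)e_{(n)}(\rho)^{-1}\tfrac1n\sum_k z_kz_k^*+\rho I_N\preceq\hat C_N(\rho)\preceq(1-\rho)e_{(1)}(\rho)^{-1}\tfrac1n\sum_k z_kz_k^*+\rho I_N$ to obtain self-referential but monotone bounds on $e_{(1)}(\rho)$ and $e_{(n)}(\rho)$; evaluating the relevant quadratic forms---after a rank-one update of $\tfrac1n\sum_k z_kz_k^*$ to remove the tested vector---via the trace lemma, uniformly over $i$ by a union bound and over $\rho$ by a Lipschitz-in-$\rho$ finite-net argument on the compact set $\hat{\mathcal R}_\varepsilon$, squeezes all the $e_i(\rho)$ to a common value. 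With $\gamma_N(\rho)\triangleq\bigl(1-(1-\rho)c_N\bigr)\tfrac1N\tr C_N\hat C_N(\rho)^{-1}$, this reads $\max_{1\le i\le n}\sup_{\rho\in\hat{\mathcal R}_\varepsilon}|e_i(\rho)-\gamma_N(\rho)|\asto0$, with $\gamma_N(\rho)$ a.s.\ confined to a fixed compact subinterval of $(0,\infty)$ for all large $N$, uniformly in $\rho$ (upper bound $\gamma_N(\rho)\le\Vert C_N\Vert\cdot\tfrac1N\tr\hat C_N(\rho)^{-1}=\Vert C_N\Vert$; lower bound $\gamma_N(\rho)\ge(1-(1-\rho)c_N)/\Vert\hat C_N(\rho)\Vert$, from the a priori bound).

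\emph{Identification of the limit.} From the identity $\hat C_N(\rho)-\bigl(\tfrac{1-\rho}{\gamma_N(\rho)}\tfrac1n\sum_i z_iz_i^*+\rho I_N\bigr)=(1-\rho)\tfrac1n\sum_i\tfrac{\gamma_N(\rho)-e_i(\rho)}{e_i(\rho)\gamma_N(\rho)}z_iz_i^*$ and the previous bounds, $\sup_{\rho\in\hat{\mathcal R}_\varepsilon}\bigl\Vert\hat C_N(\rho)-\bigl(\tfrac{1-\rho}{\gamma_N(\rho)}\tfrac1n\sum_i z_iz_i^*+\rho I_N\bigr)\bigr\Vert\asto0$; since both resolvents are bounded by $\varepsilon^{-1}$, this also gives $\tfrac1N\tr C_N\hat C_N(\rho)^{-1}=\tfrac1N\tr C_N\bigl(\tfrac{1-\rho}{\gamma_N(\rho)}\tfrac1n\sum_i z_iz_i^*+\rho I_N\bigr)^{-1}+o(1)$ uniformly. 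I would then invoke the Marchenko--Pastur deterministic equivalent for $\tfrac1N\tr C_N\bigl(\beta\tfrac1n\sum_i z_iz_i^*+\rho I_N\bigr)^{-1}$, uniformly over $\beta$ in a compact subset of $(0,\infty)$ and $\rho\in\hat{\mathcal R}_\varepsilon$ (legitimate since $\beta=\tfrac{1-\rho}{\gamma_N(\rho)}$ ranges there), and combine it with $\gamma_N(\rho)=\bigl(1-(1-\rho)c_N\bigr)\tfrac1N\tr C_N\hat C_N(\rho)^{-1}$ to show that every a.s.\ limit point $g(\rho)$ of $\tfrac1N\tr C_N\hat C_N(\rho)^{-1}$ satisfies a scalar self-consistency equation; substituting the candidate value and using the algebraic identity $1+c\,e=\bigl(1-(1-\rho)c\bigr)^{-1}$ for $e=\tfrac{1-\rho}{1-(1-\rho)c}$ collapses that equation to exactly $1=\int\tfrac{t}{g\rho+(1-\rho)t}\,\nu(dt)$. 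By uniqueness of its positive root (shown next) this forces $\tfrac1N\tr C_N\hat C_N(\rho)^{-1}\asto\hat\gamma(\rho)$, hence $\gamma_N(\rho)\asto\bigl(1-(1-\rho)c\bigr)\hat\gamma(\rho)$ and $\tfrac{1-\rho}{\gamma_N(\rho)}\to\tfrac1{\hat\gamma(\rho)}\tfrac{1-\rho}{1-(1-\rho)c}$, uniformly on $\hat{\mathcal R}_\varepsilon$; this being exactly the coefficient of $\tfrac1n\sum_i z_iz_i^*$ in $\hat S_N(\rho)$, we conclude $\sup_{\rho\in\hat{\mathcal R}_\varepsilon}\Vert\hat C_N(\rho)-\hat S_N(\rho)\Vert\asto0$.

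\emph{Well-posedness and continuity of $\hat\gamma$.} Finally, $\limsup_N\Vert C_N\Vert<\infty$ makes $\nu$ compactly supported and $\tfrac1N\tr C_N=1$ gives $M_{\nu_N,1}=1$, so by weak convergence $\nu$ is a probability measure (distinct from the point mass at $0$) with $M_{\nu,1}=1$. For fixed $\rho\in(0,1]$, $g\mapsto F(g,\rho)\triangleq\int\tfrac{t}{g\rho+(1-\rho)t}\,\nu(dt)-1$ is continuous and strictly decreasing on $(0,\infty)$, with $F(g,\rho)\to-1$ as $g\to\infty$ and $F(g,\rho)\to\tfrac1{1-\rho}-1>0$ (to $+\infty$ if $\rho=1$) as $g\to0^+$; hence $\hat\gamma(\rho)$ is well defined as its unique positive root, with $\hat\gamma(1)=1$. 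On $(0,\infty)\times[\rho_1,1]$ for any $\rho_1\in(0,1)$, $F$ is jointly continuous by dominated convergence (the integrand being dominated by $t/(g\rho_1)$ on compacta), and $\partial_g F(g,\rho)=-\rho\int\tfrac{t}{(g\rho+(1-\rho)t)^2}\,\nu(dt)<0$, so the implicit function theorem yields that $\rho\mapsto\hat\gamma(\rho)$ is continuous---in fact $C^1$---on $(0,1]$.
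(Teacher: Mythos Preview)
Your proposal has the right skeleton---scale invariance, the Sherman--Morrison identity $e_i(\rho)=(1-(1-\rho)c_N)f_i(\rho)$, and the order-statistics sandwich---but the step you yourself flag as the \textbf{main obstacle} is a genuine gap, and the paper's proof shows that it is both unnecessary and circumventable. You propose to first secure the a~priori bound $\limsup_N\sup_{\rho}\Vert\hat C_N(\rho)\Vert<\infty$ (equivalently $\liminf_N\inf_{\rho,i}e_i(\rho)>0$), then concentrate the $e_i$'s, then identify the limit. The paper does all three in one stroke: from your same self-referential inequality for the ordered $\hat d_i=f_i$, after rescaling one obtains exactly
\[
1\;\le\;\frac1N z_n^*\Bigl(\tfrac{1-\rho}{1-(1-\rho)c_N}\tfrac1n\sum_{i\neq n}z_iz_i^*+\hat d_n(\rho)\,\rho I_N\Bigr)^{-1}z_n.
\]
Now one argues by contradiction: suppose along some $\rho_n\to\rho_0$ one has $\hat d_n(\rho_n)>\hat\gamma(\rho_n)+\ell$ infinitely often. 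Replacing $\hat d_n$ by $\hat\gamma(\rho_n)+\ell$ (monotonicity) turns the right-hand side into a \emph{standard} quadratic form in $z_n$ against a matrix independent of $z_n$, whose a.s.\ limit $e^+$ is computed via the usual Stieltjes-transform fixed point and shown to satisfy $e^+<1$, contradicting the displayed inequality. The symmetric argument for $\hat d_1$ gives the other side. No a~priori norm bound on $\hat C_N(\rho)$ is ever invoked; the boundedness is a \emph{consequence}, not a hypothesis, of the contradiction. Your outline never reaches this substitution-and-contradict step, and without it your ``self-referential but monotone bounds'' do not close: the matrix on the right still carries the unknown $e_{(n)}$ (or $e_{(1)}$), and the trace lemma cannot be applied because that scalar depends on all the $z_j$.

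Two smaller points. First, your plan for uniformity in $\rho$ via a Lipschitz/finite-net argument would require a Lipschitz bound on $\rho\mapsto\hat C_N(\rho)$ that is uniform in $N$; this is plausible but not free, since $\hat C_N(\rho)$ is only implicitly defined. The paper sidesteps this by carrying the contradiction along arbitrary sequences $\rho_n\to\rho_0\in\hat{\mathcal R}_\varepsilon$, which yields the $\sup_\rho$ conclusion directly. Second, your trace identity $\tfrac1N\tr\hat C_N(\rho)^{-1}=1$ is correct and elegant, and your implicit-function-theorem argument for the continuity of $\hat\gamma$ is cleaner than the paper's direct manipulation; but neither compensates for the missing contradiction step above.
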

\begin{proof}
	The proof is deferred to Section~\ref{sec:proof_th_Chitour}.
\end{proof}

\begin{theorem}[Chen Estimate]
	\label{th:Wiesel}
	Let Assumptions~\ref{ass:c} and \ref{ass:x} hold. For $\varepsilon\in (0,1)$, define $\check{\mathcal R}_\varepsilon=[\varepsilon,1]$. For each $\rho\in(0,1]$, let $\check{C}_N(\rho)$ be the unique solution to 
	\begin{align*}
		\check{C}_N(\rho) &=  \frac{\check{B}_N(\rho)}{\frac1N\tr \check{B}_N(\rho)}
	\end{align*}
	where
	\begin{align*}
		\check{B}_N(\rho) &= (1-\rho) \frac1n\sum_{i=1}^n \frac{x_ix_i^*}{\frac1Nx_i^*\check{C}_N(\rho)^{-1}x_i} + \rho I_N.
	\end{align*}
	Then, as $N\to\infty$,
	\begin{align*}
		\sup_{\rho \in \check{\mathcal R}_\varepsilon}\left\Vert \check{C}_N(\rho) - \check{S}_N(\rho) \right\Vert \asto 0
	\end{align*}
	where
	\begin{align*}
		%\check{S}_N(\rho) &=  \frac1{M_{\nu,1}}\frac{1-\rho}{1-\rho+T_\rho} \frac1n\sum_{i=1}^n z_iz_i^* + \frac{T_\rho}{1-\rho+T_\rho} I_N
		\check{S}_N(\rho) &=  \frac{1-\rho}{1-\rho+T_\rho} \frac1n\sum_{i=1}^n z_iz_i^* + \frac{T_\rho}{1-\rho+T_\rho} I_N
	\end{align*}
	in which $T_\rho=\rho\check{\gamma}(\rho)F(\check{\gamma}(\rho);\rho)$ with, for all $x>0$,
	\begin{align*}
		F(x;\rho) &= \frac12\left(\rho - c(1-\rho) \right) + \sqrt{\frac14\left(\rho - c(1-\rho) \right)^2+(1-\rho)\frac1x}
	\end{align*}
	and ${\check{\gamma}}(\rho)$ is the unique positive solution to the equation in $\check{\gamma}$
	\begin{align*}
		%1 &= \int \frac{t}{M_{\nu,1} {\check{\gamma}}\rho + \frac{1-\rho}{(1-\rho)c+F({\check{\gamma}};\rho)} t}\nu(dt).
		1 &= \int \frac{t}{ {\check{\gamma}}\rho + \frac{1-\rho}{(1-\rho)c+F({\check{\gamma}};\rho)} t}\nu(dt).
	\end{align*}
	Moreover, the function $\rho\mapsto\check\gamma(\rho)$ thus defined is continuous on $(0,1]$.
\end{theorem}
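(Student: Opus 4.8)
The plan is to deduce Theorem~\ref{th:Wiesel} from Theorem~\ref{th:Chitour} by means of an exact algebraic identity, valid for every realization, relating $\check C_N(\rho)$ to the Abramovich--Pascal estimator evaluated at a data-dependent shrinkage value $\tilde\rho=\tilde\rho_N(\rho)$; the random-matrix content is then furnished entirely by Theorem~\ref{th:Chitour}, and what remains is a change-of-variables computation. As for Theorem~\ref{th:Chitour}, note first that $\frac1Nx_i^*\check C_N(\rho)^{-1}x_i=\tau_i\,\frac1Nz_i^*\check C_N(\rho)^{-1}z_i$, so the scalars $\tau_i$ cancel in the defining equation and we may take $x_i=z_i$ throughout; the value $\rho=1$ is trivial, since $\check C_N(1)=I_N=\check S_N(1)$.

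\emph{Step 1 (exact reduction).} Substituting $\check C_N(\rho)^{-1}=\bigl(\tfrac1N\tr\check B_N(\rho)\bigr)\check B_N(\rho)^{-1}$ into the definition of $\check B_N(\rho)$ shows that $\check B_N(\rho)$ solves an Abramovich--Pascal-type fixed-point equation in which the identity term carries an extra scalar factor; matching that factor against the Abramovich--Pascal normalization gives the following. If $\rho\in(0,1)$ and $\tilde\rho\in(\max\{0,1-c_N^{-1}\},1)$ satisfy
\[
	\tfrac1N\tr\hat C_N(\tilde\rho)=\frac{(1-\rho)\,\tilde\rho}{\rho\,(1-\tilde\rho)},
\]
and $D:=\hat C_N(\tilde\rho)/\bigl(\tfrac1N\tr\hat C_N(\tilde\rho)\bigr)$, then a direct computation that invokes the Abramovich--Pascal fixed-point equation for $\hat C_N(\tilde\rho)$ shows that $D$ solves the Chen fixed-point equation at parameter $\rho$; hence $D=\check C_N(\rho)$ by uniqueness of the Chen solution. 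For each $N$ such a $\tilde\rho$ exists by the intermediate value theorem: $\tilde\rho\mapsto\hat C_N(\tilde\rho)$ is continuous on $(\max\{0,1-c_N^{-1}\},1]$, the right-hand side above is continuous, strictly increasing, and tends to $+\infty$ as $\tilde\rho\to 1$ while $\tfrac1N\tr\hat C_N(\tilde\rho)$ equals $1$ there, and $\tfrac1N\tr\hat C_N(\tilde\rho)$ exceeds the right-hand side near the left endpoint (where it tends to $+\infty$ if $c\ge 1$ and to a finite positive value if $c<1$). By uniqueness of the Chen solution the matrix $\check C_N(\rho)$ does not depend on the choice of solution $\tilde\rho_N(\rho)$. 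A monotonicity property of $\tilde\rho\mapsto\tfrac1N\tr\hat C_N(\tilde\rho)$, together with Theorem~\ref{th:Chitour}, shows that for all large $N$ every such $\tilde\rho_N(\rho)$ lies in a fixed interval $\hat{\mathcal R}_{\varepsilon'}$, uniformly over $\rho\in\check{\mathcal R}_\varepsilon$; feeding this into $D=\check C_N(\rho)$ and using $\hat C_N(\tilde\rho)\succeq\tilde\rho I_N$ together with $\limsup_N\sup_{\tilde\rho\in\hat{\mathcal R}_{\varepsilon'}}\|\hat S_N(\tilde\rho)\|<\infty$, one also obtains that the eigenvalues of $\check C_N(\rho)$ are bounded above and away from $0$, uniformly in $N$ and $\rho\in\check{\mathcal R}_\varepsilon$.

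\emph{Step 2 (passing to the limit and matching).} By Theorem~\ref{th:Chitour}, $\sup_{\rho}\bigl\|\hat C_N(\tilde\rho_N(\rho))-\hat S_N(\tilde\rho_N(\rho))\bigr\|\asto 0$, and since $\tfrac1N\tr\hat S_N$ is bounded above and away from $0$ on $\hat{\mathcal R}_{\varepsilon'}$, Step~1 gives $\sup_{\rho}\bigl\|\check C_N(\rho)-\hat S_N(\tilde\rho_N(\rho))/(\tfrac1N\tr\hat S_N(\tilde\rho_N(\rho)))\bigr\|\asto 0$. Writing $a(\tilde\rho):=\hat\gamma(\tilde\rho)^{-1}(1-\tilde\rho)/(1-(1-\tilde\rho)c)$, so that $\hat S_N(\tilde\rho)=a(\tilde\rho)\tfrac1n\sum_iz_iz_i^*+\tilde\rho I_N$, both $\hat S_N(\tilde\rho_N(\rho))/(\tfrac1N\tr\hat S_N(\tilde\rho_N(\rho)))$ and $\check S_N(\rho)$ are affine functions of the single matrix $\tfrac1n\sum_iz_iz_i^*$, whose coefficients are bounded uniformly in $N,\rho$; using $\|\tfrac1n\sum_iz_iz_i^*\|=O(1)$ and $\tfrac1N\tr(\tfrac1n\sum_iz_iz_i^*)\asto 1$, their difference tends to $0$ in operator norm, uniformly in $\rho$, once (i) $\tilde\rho_N(\rho)$ converges uniformly in $\rho$ to a deterministic limit $\tilde\rho^\infty(\rho)$ and (ii) the identity $T_\rho=(1-\rho)\,\tilde\rho^\infty(\rho)/a(\tilde\rho^\infty(\rho))$ holds. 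Point (i) follows from a uniform implicit-function argument applied to the deterministic equation obtained by replacing $\tfrac1N\tr\hat C_N(\tilde\rho)$ by its Theorem~\ref{th:Chitour} limit in the scalar equation of Step~1; its solution $\tilde\rho^\infty(\rho)$ is unique and continuous in $\rho$, uniqueness again resting on the monotonicity property above. Point (ii) is the substantive algebraic step: eliminating $\tilde\rho^\infty(\rho)$ and $\hat\gamma(\tilde\rho^\infty(\rho))$ between $a(\tilde\rho^\infty)+\tilde\rho^\infty=(1-\rho)\tilde\rho^\infty/(\rho(1-\tilde\rho^\infty))$, the defining equation of $\hat\gamma$, and the definition of $T_\rho$, one recovers exactly $T_\rho=\rho\,\check\gamma(\rho)F(\check\gamma(\rho);\rho)$ with $F$ and $\check\gamma$ as in the statement. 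This yields $\sup_{\rho\in\check{\mathcal R}_\varepsilon}\|\check C_N(\rho)-\check S_N(\rho)\|\asto 0$. Finally, for each $\rho\in(0,1]$ the positive solution $\check\gamma(\rho)$ of the stated integral equation is unique, the right-hand side being strictly decreasing in $\check\gamma$ (since $x\mapsto F(x;\rho)$ is decreasing) with limits $+\infty$ as $\check\gamma\to 0^+$ and $0$ as $\check\gamma\to\infty$; continuity in $\rho$ then follows from the implicit function theorem.

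\emph{Expected main obstacle.} The technical heart is the control of the data-dependent change of variable $\rho\mapsto\tilde\rho_N(\rho)$ in Steps~1--2: proving it is well defined and, for large $N$, confined (uniformly in $\rho$) to a compact sub-interval on which Theorem~\ref{th:Chitour} converges uniformly, and that it converges uniformly in $\rho$ to a deterministic limit. This rests on a monotonicity lemma for $\tilde\rho\mapsto\tfrac1N\tr\hat C_N(\tilde\rho)$ and on carrying the uniform-in-$\rho$ control through the normalization. The remaining laborious piece is the elimination producing $F$, $\check\gamma$ and $T_\rho$. A self-contained alternative bypasses Theorem~\ref{th:Chitour}: one performs leave-one-out and quadratic-form concentration directly on the Chen equation to show that $\tfrac1Nz_i^*\check C_N(\rho)^{-1}z_i$ concentrates, uniformly in $i$ and $\rho$, around $\tfrac1N\tr\bigl(C_N\check C_N(\rho)^{-1}\bigr)$, then feeds this back to obtain $\check C_N(\rho)=\bigl[(1-\rho)\tfrac1n\sum_iz_iz_i^*+T_\rho I_N\bigr]/(1-\rho+T_\rho)+o(1)$ and closes the resulting self-consistent equation against the standard deterministic equivalent of the shrunk sample covariance matrix $\tfrac1n\sum_iz_iz_i^*$; there the main difficulty is again the uniform-in-$\rho$ leave-one-out analysis in the presence of the normalization.
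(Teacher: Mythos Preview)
Your approach is genuinely different from the paper's. The paper proves Theorem~\ref{th:Wiesel} \emph{directly}, by re-running the leave-one-out and contradiction machinery of Theorem~\ref{th:Chitour} on $\check B_N(\rho)$ itself: it introduces $\check d_i=\frac1Nz_i^*\check B_{(i)}^{-1}z_i$, shows that $\frac1N\tr\check B_N-c_N(1-\rho)=F_N\bigl([\frac1n\sum_i\frac{\frac1N\|z_i\|^2}{\check d_i}]^{-1}\bigr)$ (this is exactly where $F$ enters, as the positive root of a quadratic encoding the trace self-consistency), and then proves $\sup_\rho\max_i|\check d_i(\rho)-\check\gamma(\rho)|\asto 0$ by contradiction. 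The extra difficulty over Theorem~\ref{th:Chitour} is that the $F_N$-factor involving all the $\check d_i$ appears multiplicatively in the key inequality, and the paper handles this by exploiting that $x\mapsto xF_N(x)$ is strictly increasing with derivative uniformly bounded away from zero near the target. Your route instead reduces Theorem~\ref{th:Wiesel} to Theorem~\ref{th:Chitour} via the exact identity $\check C_N(\rho)=\hat C_N(\tilde\rho)/\frac1N\tr\hat C_N(\tilde\rho)$ at a data-dependent $\tilde\rho$; this is essentially a random-matrix precursor of the paper's Lemma~\ref{lem:equivalent} (which does the analogous matching at the level of $\hat S_N$ and $\check S_N$), and your Step~1 algebra is correct. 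What your approach buys is conceptual economy and a transparent explanation of why the two estimators are asymptotically equivalent; what the paper's direct proof buys is self-containment and no need to control a random reparametrization.

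The real gap in your proposal is precisely the point you flag as the main obstacle, and it is not resolved. You need $\tilde\rho_N(\rho)\in\hat{\mathcal R}_{\varepsilon'}$ uniformly in $\rho\in\check{\mathcal R}_\varepsilon$ \emph{before} you can invoke Theorem~\ref{th:Chitour}; equivalently you need an a~priori lower bound on $\alpha_N(\rho)=\frac1N\tr\check B_N(\rho)$ of the form $\alpha_N(\rho)>(1-\rho)/(1-\max\{0,1-c^{-1}\}-\varepsilon')$, since $\tilde\rho_N(\rho)=1-(1-\rho)/\alpha_N(\rho)$. The easy bounds ($\alpha_N\ge\rho$ from $\check B_N\succeq\rho I_N$, or $\alpha_N>c_N(1-\rho)$ from positivity of the leave-one-out quadratic forms) do not suffice when $c<1$, and the ``monotonicity lemma for $\tilde\rho\mapsto\frac1N\tr\hat C_N(\tilde\rho)$'' you invoke is neither stated nor proved, and is not obvious. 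Establishing the required a~priori bound on $\alpha_N$ seems to demand exactly the kind of quadratic-form control on $\check B_N$ that the paper's direct argument supplies, so there is a circularity risk. Similarly, the uniform convergence $\tilde\rho_N(\rho)\to\tilde\rho^\infty(\rho)$ via an ``implicit function argument'' needs the limiting scalar equation to have a \emph{unique} solution varying continuously in $\rho$; this is true (and is essentially Lemma~\ref{lem:equivalent}), but you should use that lemma rather than an unproved monotonicity. Finally, the algebraic elimination in Step~2(ii) that produces $F$, $\check\gamma$ and $T_\rho$ is only asserted; it is doable (the paper effectively carries it out inside the proof of Lemma~\ref{lem:equivalent}), but it is the substance of the matching and should be written out.
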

\begin{proof}
	The proof is deferred to Section~\ref{sec:proof_th_Wiesel}.
\end{proof}

Theorem~\ref{th:Chitour} and Theorem~\ref{th:Wiesel} show that, as $N,n\to\infty$ with $N/n\to c$, the matrices $\hat{C}_N(\rho)$ and $\check{C}_N(\rho)$, defined as the non-trivial solution of fixed-point equations, behave similar to matrices $\hat{S}_N(\rho)$ and $\check{S}_N(\rho)$, respectively, whose characterization is well-known and much simpler than that of  $\hat{C}_N(\rho)$ and $\check{C}_N(\rho)$ themselves. Indeed, $\hat{S}_N(\rho)$ and $\check{S}_N(\rho)$ are random matrices of the sample covariance matrix type thoroughly studied in e.g., \citep{MAR67,SIL95,CHO95}. 

Note that these results are similar in statement to the results of \citep{COU13,COU13b} for robust estimators of the Maronna-type. Technically speaking, the proof of both Theorem~\ref{th:Chitour} and Theorem~\ref{th:Wiesel} unfold from the same technique as produced in these articles. However, while the proof of Theorem~\ref{th:Chitour} comes with no major additional difficulty compared to these works, due to the scale normalization imposed in the definition of $\check{C}_N(\rho)$, the proof of Theorem~\ref{th:Wiesel} requires a more elaborate approach than used in \citep{COU13b}. Another difference to previous works lies here in that, unlike Maronna's estimator that only attenuates the effect of the scale parameters $\tau_i$, the proposed Tyler-based estimators discard this effect altogether. Also, the technical study of Maronna's estimator can be made under the assumption that $C_N=I_N$ (from a natural variable change) while here, because of the regularization term $\rho I_N$, $C_N$ does intervene in an intricate manner in the results.

As a side remark, it is shown in \citep{PAS13} that for each $N,n$ fixed with $n\geq N+1$, $\hat{C}_N(\rho)\to \hat{C}_N(0)$ as $\rho\to 0$ with $\hat{C}_N(0)$ defined (almost surely) as one of the (uncountably many) solutions to
\begin{align}
	\label{eq:CN0}
\hat{C}_N(0)=\frac1n\sum_{i=1}^n \frac{x_ix_i^*}{\frac1Nx_i^*\hat{C}_N(0)^{-1}x_i}.
\end{align}
In the regime where $N,n\to\infty$ and $N/n\to c$, this result is difficult to generalize as it is challenging to handle the limit $\Vert \hat{C}_N(\rho_N)-\hat{S}_N(\rho_N)\Vert$ for a sequence $\{\rho_N\}_{N=1}^\infty$ with $\rho_N\to 0$. The requirement that $\rho_N\to \rho_0>0$ on any such sequence is indeed at the core of the proof of Theorem~\ref{th:Chitour} (see Equations~\eqref{eq:e+} and \eqref{eq:1} in Section~\ref{sec:proof_th_Chitour} where $\rho_0>0$ is necessary to ensure $e^+<1$). This explains why the set $\hat{\mathcal R}_\varepsilon$ in Theorem~\ref{th:Chitour} excludes the region $[0,\varepsilon)$. Similar arguments hold for $\check{C}_N(\rho)$. As a matter of fact, the behavior of any solution $\hat{C}_N(0)$ to \eqref{eq:CN0} in the large $N,n$ regime, recently derived in \citep{ZHA14}, remains difficult to handle with our proof technique.

	An immediate consequence of Theorem~\ref{th:Chitour} and Theorem~\ref{th:Wiesel} is that the empirical spectral distributions of $\hat{C}_N(\rho)$ and $\check{C}_N(\rho)$ converge to the well-known respective limiting distributions of $\hat{S}_N(\rho)$ and $\check{S}_N(\rho)$, characterized in the following result.

\begin{corollary}[Limiting spectral distribution]
	\label{cor:limit}
	Under the settings of Theorem~\ref{th:Chitour} and Theorem~\ref{th:Wiesel},
	\begin{align*}
		\frac1N\sum_{i=1}^N {\bm\delta}_{\lambda_i(\hat{C}_N(\rho))} &\asto \hat{\mu}_\rho,~\rho\in \hat{\mathcal R}_\varepsilon \\
		\frac1N\sum_{i=1}^N {\bm\delta}_{\lambda_i(\check{C}_N(\rho))} &\asto \check{\mu}_\rho,~\rho\in \check{\mathcal R}_\varepsilon
	\end{align*}
	where the convergence arrow is understood as the weak convergence of probability measures, for almost every sequence $\{x_1,\ldots,x_n\}_{n=1}^\infty$, and where
	\begin{align*}
		\hat{\mu}_\rho &= \max\{0,1-c^{-1}\}{\bm\delta}_{\rho} + \underline{\hat{\mu}}_\rho \\
		\check{\mu}_\rho &= \max\{0,1-c^{-1}\}{\bm\delta}_{\frac{ T_\rho }{ 1-\rho + T_\rho } } + \underline{\check{\mu}}_\rho
	\end{align*}
	with $\underline{\hat{\mu}}_\rho$ and $\underline{\check{\mu}}_\rho$ continuous finite measures with compact support in $[\rho,\infty)$ and $[{ T_\rho }( 1-\rho + T_\rho )^{-1},\infty)$ respectively, real analytic wherever their density is positive. The measure $\hat{\mu}_\rho$ is the only measure with Stieltjes transform $m_{\hat{\mu}_\rho}(z)$ defined, for $z\in\CC$ with $\Im[z]>0$, as
	\begin{align*}
		m_{\hat{\mu}_\rho}(z) &= \hat{\gamma}\frac{1-(1-\rho)c}{1-\rho} \int \frac1{ \hat{z}(\rho)  +\frac{t}{1+c \hat{\delta}(z)}}\nu(dt)
	\end{align*}
	where $\hat{z}(\rho)=(\rho-z)\hat{\gamma}(\rho)\frac{1-(1-\rho)c}{1-\rho}$ and $\hat{\delta}(z)$ is the unique solution with positive imaginary part of the equation in $\hat{\delta}$
	\begin{align*}
		\hat{\delta} &= \int \frac{t}{ \hat{z}(\rho)  +\frac{t}{1+c \hat{\delta}}}\nu(dt).
	\end{align*}
	The measure $\check{\mu}_\rho$ is the only measure with Stieltjes transform $m_{\check{\mu}_\rho}(z)$ defined, for $\Im[z]>0$ as
	\begin{align*}
		%m_{\check{\mu}_\rho}(z) &= M_{\nu,1} \frac{1-\rho+T_\rho}{1-\rho} \int \frac1{\check{z}(\rho)+\frac{t}{1+c\check{\delta}(z)}}\nu(dt)
		m_{\check{\mu}_\rho}(z) &= \frac{1-\rho+T_\rho}{1-\rho} \int \frac1{\check{z}(\rho)+\frac{t}{1+c\check{\delta}(z)}}\nu(dt)
	\end{align*}
	%with $\check{z}(\rho)=\frac{M_{\nu,1}}{1-\rho}T_\rho(1-z)-M_{\nu,1}z$ and $\check{\delta}(z)$ the unique solution with positive imaginary part of the equation in $\check{\delta}$
	with $\check{z}(\rho)=\frac{1}{1-\rho}T_\rho(1-z)-z$ and $\check{\delta}(z)$ the unique solution with positive imaginary part of the equation in $\check{\delta}$
	\begin{align*}
		\check{\delta} &= \int \frac{t}{\check{z}(\rho)+\frac{t}{1+c\check{\delta}}}\nu(dt).
	\end{align*}
\end{corollary}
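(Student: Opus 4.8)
The plan is to transfer the statement, via the uniform spectral-norm approximations of Theorem~\ref{th:Chitour} and Theorem~\ref{th:Wiesel}, to the matrices $\hat S_N(\rho)$ and $\check S_N(\rho)$, which are affine images of a single sample-covariance-type matrix whose limiting spectral distribution is classical. First I would recall that for Hermitian $N\times N$ matrices $A,B$, Weyl's inequality gives $\max_i|\lambda_i(A)-\lambda_i(B)|\le \|A-B\|$, so the L\'evy distance between the empirical spectral distributions of $A$ and $B$ is at most $\|A-B\|$. Combined with $\|\hat C_N(\rho)-\hat S_N(\rho)\|\asto 0$ (respectively $\|\check C_N(\rho)-\check S_N(\rho)\|\asto 0$), this shows that for each fixed $\rho$ the empirical spectral distribution of $\hat C_N(\rho)$ has the same almost sure weak limit as that of $\hat S_N(\rho)$ (resp.\ for the checked quantities), so it suffices to identify the limiting spectral distributions of $\hat S_N(\rho)$ and $\check S_N(\rho)$; the ``almost every sequence'' qualifier is handled by intersecting the two full-probability events.

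Next I would note that $\hat S_N(\rho)=a_\rho\,B_N+\rho I_N$ and $\check S_N(\rho)=b_\rho\,B_N+(1-b_\rho)I_N$ are both affine images $t\mapsto s\,t+o$ of $B_N\triangleq\frac1n\sum_{i=1}^n z_iz_i^*$, with $(s,o)=\big(\tfrac1{\hat\gamma(\rho)}\tfrac{1-\rho}{1-(1-\rho)c},\,\rho\big)$ in the first case and $(s,o)=\big(\tfrac{1-\rho}{1-\rho+T_\rho},\,\tfrac{T_\rho}{1-\rho+T_\rho}\big)$ in the second, both slopes being positive on the respective ranges of $\rho$. Since $z_i=A_Ny_i=\sqrt{\bar N}\,A_N\tilde y_i/\|\tilde y_i\|$ with $\tilde y_i$ standard Gaussian and $\bar N/\|\tilde y_i\|^2\asto 1$, $B_N$ has the same limiting spectral distribution as $\frac1n C_N^{1/2}GG^*C_N^{1/2}$ with $G$ an $N\times n$ matrix of i.i.d.\ standard Gaussian entries (the effect of replacing the weights $\bar N/\|\tilde y_i\|^2$ by $1$ is removed by an interlacing/rank-one argument, or one invokes a Marchenko--Pastur theorem stated directly for separable/elliptical models). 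I would then invoke \citep{MAR67,SIL95} to get that the empirical spectral distribution of $B_N$ converges almost surely and weakly to a deterministic measure $\underline\nu$ carrying an atom of mass $\max\{0,1-c^{-1}\}$ at $0$ (the rank deficiency of $B_N$ when $N>n$) and a continuous part whose Stieltjes transform solves a Silverstein fixed-point equation driven by $\nu$ and $c$, and \citep{CHO95} to get that this continuous part has compact support (using $\limsup_N\|C_N\|<\infty$ and $\nu\neq{\bm\delta}_0$), is continuous, and is real analytic wherever its density is positive.

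It then remains to push $\underline\nu$ forward through $t\mapsto s\,t+o$: the atom moves to $o$ (i.e.\ to $\rho$, respectively to $T_\rho(1-\rho+T_\rho)^{-1}$) with unchanged mass, the support shifts to $[\rho,\infty)$ (resp.\ $[T_\rho(1-\rho+T_\rho)^{-1},\infty)$), continuity and real analyticity of the density are preserved, and the Stieltjes transform transforms as $m_{\hat\mu_\rho}(z)=s^{-1}m_{\underline\nu}\big((z-o)/s\big)$. Substituting the Silverstein characterization of $m_{\underline\nu}$ and changing variables — setting $\hat z(\rho)=-(z-\rho)/a_\rho=(\rho-z)\hat\gamma(\rho)\tfrac{1-(1-\rho)c}{1-\rho}$ and likewise for $\check z(\rho)$, and identifying $\hat\delta(z)$, $\check\delta(z)$ with the suitably scaled companion Stieltjes transform of $B_N$ — produces exactly the expressions for $m_{\hat\mu_\rho}$, $m_{\check\mu_\rho}$ and the defining equations for $\hat\delta$, $\check\delta$ in the statement; uniqueness of each limiting measure follows since a finite measure on $\RR$ is determined by its Stieltjes transform on $\{\Im z>0\}$, and uniqueness of $\hat\delta(z)$, $\check\delta(z)$ with positive imaginary part is part of the Silverstein theory. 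The only genuinely non-mechanical step, and the one I expect to cost the most care, is this last bookkeeping: matching the fixed-point equation as stated in \citep{SIL95} (written for the companion transform and in a different normalization) to the precise form in the corollary, and checking that $\hat\gamma(\rho)$, $T_\rho$ and $c$ enter exactly as claimed — the reduction of the sphere-distributed $z_i$ to the Gaussian model being a secondary technical point that is by now standard.
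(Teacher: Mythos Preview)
Your proposal is correct and is precisely the argument the paper has in mind: the paper's own proof is a single sentence stating that the corollary is an immediate application of \citep{SIL95,CHO95} together with Theorems~\ref{th:Chitour} and \ref{th:Wiesel}, and you have simply unpacked that sentence (Weyl's inequality to transfer from $\hat C_N,\check C_N$ to $\hat S_N,\check S_N$, then the affine pushforward of the Silverstein limit of $B_N$). The only nontrivial bookkeeping you flag --- matching the Silverstein fixed-point normalization to the displayed formulas for $\hat z,\check z,\hat\delta,\check\delta$ --- is exactly the content the paper leaves implicit.
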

\begin{proof}
	This is an immediate application of \citep{SIL95,CHO95} and Theorems~\ref{th:Chitour} and \ref{th:Wiesel}.
\end{proof}

\begin{figure}[h!]
  \centering
  \begin{tikzpicture}[font=\footnotesize]
    \renewcommand{\axisdefaulttryminticks}{4} 
    \tikzstyle{every major grid}+=[style=densely dashed]       
    \tikzstyle{every axis y label}+=[yshift=-10pt] 
    \tikzstyle{every axis x label}+=[yshift=5pt]
    \tikzstyle{every axis legend}+=[cells={anchor=west},fill=white,
        at={(0.98,0.98)}, anchor=north east, font=\scriptsize]
    \begin{axis}[
      %ybar,
      xmin=0,
      ymin=0,
      xmax=4.5,
      ymax=1.6,
      bar width=1.5pt,
      grid=major,
      ymajorgrids=false,
      scaled ticks=true,
      %scale ticks above={4},
      xlabel={Eigenvalues},
      ylabel={Density}
      ]
      \addplot+[ybar,mark=none,color=black,fill=gray!40!white,area legend] coordinates{
	      (0.025000,0.000000)(0.075000,0.000000)(0.125000,0.000000)(0.175000,0.000000)(0.225000,0.000000)(0.275000,0.000000)(0.325000,0.000000)(0.375000,0.000000)(0.425000,0.078125)(0.475000,1.015625)(0.525000,1.406250)(0.575000,1.484375)(0.625000,1.484375)(0.675000,1.171875)(0.725000,1.250000)(0.775000,1.015625)(0.825000,0.625000)(0.875000,0.468750)(0.925000,0.000000)(0.975000,0.000000)(1.025000,0.000000)(1.075000,0.000000)(1.125000,0.000000)(1.175000,0.000000)(1.225000,0.000000)(1.275000,0.000000)(1.325000,0.000000)(1.375000,0.000000)(1.425000,0.000000)(1.475000,0.000000)(1.525000,0.000000)(1.575000,0.000000)(1.625000,0.078125)(1.675000,0.156250)(1.725000,0.234375)(1.775000,0.156250)(1.825000,0.234375)(1.875000,0.312500)(1.925000,0.312500)(1.975000,0.234375)(2.025000,0.234375)(2.075000,0.312500)(2.125000,0.234375)(2.175000,0.234375)(2.225000,0.312500)(2.275000,0.312500)(2.325000,0.312500)(2.375000,0.312500)(2.425000,0.234375)(2.475000,0.234375)(2.525000,0.312500)(2.575000,0.234375)(2.625000,0.312500)(2.675000,0.156250)(2.725000,0.312500)(2.775000,0.312500)(2.825000,0.234375)(2.875000,0.234375)(2.925000,0.234375)(2.975000,0.234375)(3.025000,0.234375)(3.075000,0.234375)(3.125000,0.234375)(3.175000,0.156250)(3.225000,0.156250)(3.275000,0.156250)(3.325000,0.234375)(3.375000,0.156250)(3.425000,0.156250)(3.475000,0.156250)(3.525000,0.156250)(3.575000,0.156250)(3.625000,0.156250)(3.675000,0.078125)(3.725000,0.156250)(3.775000,0.078125)(3.825000,0.078125)(3.875000,0.156250)(3.925000,0.078125)(3.975000,0.000000)(4.025000,0.000000)(4.075000,0.000000)(4.125000,0.000000)(4.175000,0.000000)(4.225000,0.000000)(4.275000,0.000000)(4.325000,0.000000)(4.375000,0.000000)(4.425000,0.000000)(4.475000,0.000000)(4.525000,0.000000)(4.575000,0.000000)(4.625000,0.000000)(4.675000,0.000000)(4.725000,0.000000)(4.775000,0.000000)(4.825000,0.000000)(4.875000,0.000000)(4.925000,0.000000)
      };
      \addplot[black,smooth,line width=1pt] plot coordinates{
	      (0.,0.000045)(0.02,0.000048)(0.04,0.000051)(0.06,0.000055)(0.08,0.000059)(0.1,0.000064)(0.12,0.000070)(0.14,0.000076)(0.16,0.000084)(0.18,0.000092)(0.2,0.000102)(0.22,0.000114)(0.24,0.000129)(0.26,0.000147)(0.28,0.000169)(0.3,0.000197)(0.32,0.000234)(0.34,0.000286)(0.36,0.000359)(0.38,0.000475)(0.4,0.000678)(0.42,0.001144)(0.44,0.003820)(0.46,0.849217)(0.48,1.172947)(0.5,1.338276)(0.52,1.427212)(0.54,1.472072)(0.56,1.486673)(0.58,1.481180)(0.6,1.460377)(0.62,1.428754)(0.64,1.387955)(0.66,1.339967)(0.68,1.286662)(0.7,1.227588)(0.72,1.164220)(0.74,1.095522)(0.76,1.022960)(0.78,0.944484)(0.8,0.859677)(0.82,0.767183)(0.84,0.664717)(0.86,0.545014)(0.88,0.394253)(0.9,0.128598)(0.92,0.000768)(0.94,0.000451)(0.96,0.000326)(0.98,0.000256)(1.,0.000210)(1.02,0.000178)(1.04,0.000154)(1.06,0.000135)(1.08,0.000121)(1.1,0.000109)(1.12,0.000099)(1.14,0.000091)(1.16,0.000085)(1.18,0.000079)(1.2,0.000074)(1.22,0.000070)(1.24,0.000067)(1.26,0.000064)(1.28,0.000062)(1.3,0.000061)(1.32,0.000059)(1.34,0.000058)(1.36,0.000058)(1.38,0.000058)(1.4,0.000059)(1.42,0.000060)(1.44,0.000062)(1.46,0.000065)(1.48,0.000069)(1.5,0.000075)(1.52,0.000085)(1.54,0.000099)(1.56,0.000126)(1.58,0.000197)(1.6,0.036046)(1.62,0.091694)(1.64,0.121931)(1.66,0.144766)(1.68,0.162967)(1.7,0.178229)(1.72,0.191142)(1.74,0.202434)(1.76,0.212423)(1.78,0.221027)(1.8,0.228507)(1.82,0.235404)(1.84,0.241556)(1.86,0.246773)(1.88,0.251752)(1.9,0.256116)(1.92,0.259965)(1.94,0.263347)(1.96,0.266360)(1.98,0.269002)(2.,0.271524)(2.02,0.273608)(2.04,0.275437)(2.06,0.276992)(2.08,0.278234)(2.1,0.279391)(2.12,0.280449)(2.14,0.281104)(2.16,0.281786)(2.18,0.282080)(2.2,0.282473)(2.22,0.282510)(2.24,0.282594)(2.26,0.282362)(2.28,0.282083)(2.3,0.281757)(2.32,0.281341)(2.34,0.280779)(2.36,0.280123)(2.38,0.279394)(2.4,0.278429)(2.42,0.277627)(2.44,0.276616)(2.46,0.275498)(2.48,0.274452)(2.5,0.273252)(2.52,0.271986)(2.54,0.270530)(2.56,0.269147)(2.58,0.267784)(2.6,0.266210)(2.62,0.264676)(2.64,0.263197)(2.66,0.261441)(2.68,0.259801)(2.7,0.258067)(2.72,0.256335)(2.74,0.254518)(2.76,0.252654)(2.78,0.250742)(2.8,0.248722)(2.82,0.246730)(2.84,0.244832)(2.86,0.242779)(2.88,0.240665)(2.9,0.238481)(2.92,0.236302)(2.94,0.234195)(2.96,0.231933)(2.98,0.229710)(3.,0.227448)(3.02,0.225076)(3.04,0.222717)(3.06,0.220251)(3.08,0.217836)(3.1,0.215481)(3.12,0.212920)(3.14,0.210411)(3.16,0.207936)(3.18,0.205220)(3.2,0.202711)(3.22,0.199948)(3.24,0.197243)(3.26,0.194575)(3.28,0.191884)(3.3,0.188975)(3.32,0.186231)(3.34,0.183202)(3.36,0.180388)(3.38,0.177406)(3.4,0.174355)(3.42,0.171335)(3.44,0.168253)(3.46,0.165089)(3.48,0.161785)(3.5,0.158556)(3.52,0.155100)(3.54,0.151703)(3.56,0.148310)(3.58,0.144853)(3.6,0.141193)(3.62,0.137371)(3.64,0.133705)(3.66,0.129701)(3.68,0.125698)(3.7,0.121596)(3.72,0.117360)(3.74,0.113159)(3.76,0.108503)(3.78,0.103743)(3.8,0.098811)(3.82,0.093671)(3.84,0.088229)(3.86,0.082513)(3.88,0.076294)(3.9,0.069647)(3.92,0.062258)(3.94,0.053960)(3.96,0.044130)(3.98,0.031463)(4.,0.006866)(4.02,0.000084)(4.04,0.000049)(4.06,0.000038)(4.08,0.000032)(4.1,0.000027)(4.12,0.000024)(4.14,0.000022)(4.16,0.000020)(4.18,0.000019)(4.2,0.000017)(4.22,0.000016)(4.24,0.000015)(4.26,0.000014)(4.28,0.000014)(4.3,0.000013)(4.32,0.000012)(4.34,0.000012)(4.36,0.000011)(4.38,0.000011)(4.4,0.000010)(4.42,0.000010)(4.44,0.000009)(4.46,0.000009)(4.48,0.000009)(4.5,0.000009)(4.52,0.000008)(4.54,0.000008)(4.56,0.000008)(4.58,0.000008)(4.6,0.000007)(4.62,0.000007)(4.64,0.000007)(4.66,0.000007)(4.68,0.000007)(4.7,0.000006)(4.72,0.000006)(4.74,0.000006)(4.76,0.000006)(4.78,0.000006)(4.8,0.000006)(4.82,0.000005)(4.84,0.000005)(4.86,0.000005)(4.88,0.000005)(4.9,0.000005)
      };
      \legend{ {Empirical eigenvalue distribution},{Limiting density $\hat{p}_\rho$} }
    \end{axis}
  \end{tikzpicture}
  \caption{Histogram of the eigenvalues of $\hat{C}_N$ (Abramovich--Pascal type) for $n=2048$, $N=256$, $C_N=\frac13\diag(I_{128},5I_{128})$, $\rho=0.2$, versus limiting eigenvalue distribution.}
  \label{fig:hist_hatC}
\end{figure}
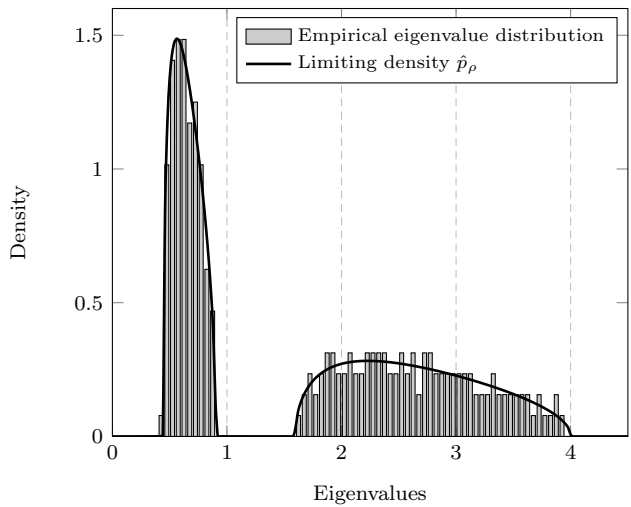

\begin{figure}[h!]
  \centering
  \begin{tikzpicture}[font=\footnotesize]
    \renewcommand{\axisdefaulttryminticks}{4} 
    \tikzstyle{every major grid}+=[style=densely dashed]       
    \tikzstyle{every axis y label}+=[yshift=-10pt] 
    \tikzstyle{every axis x label}+=[yshift=5pt]
    \tikzstyle{every axis legend}+=[cells={anchor=west},fill=white,
        at={(0.98,0.98)}, anchor=north east, font=\scriptsize ]
    \begin{axis}[
      %ybar,
      xmin=0,
      ymin=0,
      xmax=2.5,
      ymax=3,
      bar width=1.3pt,
      grid=major,
      ymajorgrids=false,
      scaled ticks=true,
      %scale ticks above={4},
      xlabel={Eigenvalues},
      ylabel={Density}
      ]
      \addplot+[ybar,mark=none,color=black,fill=gray!40!white,area legend] coordinates{
(0.012500,0.000000)(0.037500,0.000000)(0.062500,0.000000)(0.087500,0.000000)(0.112500,0.000000)(0.137500,0.000000)(0.162500,0.000000)(0.187500,0.000000)(0.212500,0.000000)(0.237500,0.000000)(0.262500,0.000000)(0.287500,0.000000)(0.312500,0.156250)(0.337500,1.875000)(0.362500,2.500000)(0.387500,2.812500)(0.412500,2.343750)(0.437500,2.500000)(0.462500,2.031250)(0.487500,2.031250)(0.512500,1.562500)(0.537500,1.406250)(0.562500,0.781250)(0.587500,0.000000)(0.612500,0.000000)(0.637500,0.000000)(0.662500,0.000000)(0.687500,0.000000)(0.712500,0.000000)(0.737500,0.000000)(0.762500,0.000000)(0.787500,0.000000)(0.812500,0.000000)(0.837500,0.000000)(0.862500,0.000000)(0.887500,0.000000)(0.912500,0.000000)(0.937500,0.000000)(0.962500,0.000000)(0.987500,0.312500)(1.012500,0.156250)(1.037500,0.312500)(1.062500,0.312500)(1.087500,0.312500)(1.112500,0.468750)(1.137500,0.468750)(1.162500,0.625000)(1.187500,0.312500)(1.212500,0.625000)(1.237500,0.312500)(1.262500,0.625000)(1.287500,0.468750)(1.312500,0.468750)(1.337500,0.625000)(1.362500,0.312500)(1.387500,0.625000)(1.412500,0.468750)(1.437500,0.468750)(1.462500,0.468750)(1.487500,0.625000)(1.512500,0.468750)(1.537500,0.312500)(1.562500,0.625000)(1.587500,0.312500)(1.612500,0.468750)(1.637500,0.625000)(1.662500,0.468750)(1.687500,0.156250)(1.712500,0.625000)(1.737500,0.468750)(1.762500,0.468750)(1.787500,0.312500)(1.812500,0.312500)(1.837500,0.312500)(1.862500,0.312500)(1.887500,0.312500)(1.912500,0.468750)(1.937500,0.312500)(1.962500,0.312500)(1.987500,0.312500)(2.012500,0.312500)(2.037500,0.312500)(2.062500,0.156250)(2.087500,0.312500)(2.112500,0.156250)(2.137500,0.312500)(2.162500,0.156250)(2.187500,0.312500)(2.212500,0.000000)(2.237500,0.312500)(2.262500,0.156250)(2.287500,0.000000)(2.312500,0.156250)(2.337500,0.000000)(2.362500,0.000000)(2.387500,0.000000)(2.412500,0.000000)(2.437500,0.000000)(2.462500,0.000000)(2.487500,0.000000)(2.512500,0.000000)(2.537500,0.000000)(2.562500,0.000000)(2.587500,0.000000)(2.612500,0.000000)(2.637500,0.000000)(2.662500,0.000000)(2.687500,0.000000)(2.712500,0.000000)(2.737500,0.000000)(2.762500,0.000000)(2.787500,0.000000)(2.812500,0.000000)(2.837500,0.000000)(2.862500,0.000000)(2.887500,0.000000)(2.912500,0.000000)
      };
      \addplot[black,smooth,line width=1pt] plot coordinates{
	      (0.,0.000098)(0.02,0.000108)(0.04,0.000120)(0.06,0.000133)(0.08,0.000150)(0.1,0.000170)(0.12,0.000194)(0.14,0.000225)(0.16,0.000264)(0.18,0.000315)(0.2,0.000385)(0.22,0.000484)(0.24,0.000634)(0.26,0.000882)(0.28,0.001360)(0.3,0.002641)(0.32,0.184157)(0.34,2.075148)(0.36,2.492919)(0.38,2.618434)(0.4,2.606305)(0.42,2.516640)(0.44,2.380210)(0.46,2.208488)(0.48,2.011036)(0.5,1.786991)(0.52,1.531941)(0.54,1.234581)(0.56,0.852742)(0.58,0.010712)(0.6,0.001422)(0.62,0.000845)(0.64,0.000600)(0.66,0.000462)(0.68,0.000375)(0.7,0.000315)(0.72,0.000273)(0.74,0.000242)(0.76,0.000219)(0.78,0.000202)(0.8,0.000191)(0.82,0.000184)(0.84,0.000181)(0.86,0.000183)(0.88,0.000191)(0.9,0.000208)(0.92,0.000241)(0.94,0.000310)(0.96,0.000534)(0.98,0.125224)(1.,0.228642)(1.02,0.290055)(1.04,0.334227)(1.06,0.368299)(1.08,0.394998)(1.1,0.416936)(1.12,0.434857)(1.14,0.449752)(1.16,0.461684)(1.18,0.471263)(1.2,0.479192)(1.22,0.485436)(1.24,0.490219)(1.26,0.493981)(1.28,0.496372)(1.3,0.498203)(1.32,0.498888)(1.34,0.498755)(1.36,0.498275)(1.38,0.496803)(1.4,0.495030)(1.42,0.492682)(1.44,0.489849)(1.46,0.486661)(1.48,0.483027)(1.5,0.479064)(1.52,0.474818)(1.54,0.470219)(1.56,0.465317)(1.58,0.460379)(1.6,0.454876)(1.62,0.449154)(1.64,0.443409)(1.66,0.437309)(1.68,0.430919)(1.7,0.424579)(1.72,0.417800)(1.74,0.410912)(1.76,0.403885)(1.78,0.396688)(1.8,0.389460)(1.82,0.381847)(1.84,0.374051)(1.86,0.366057)(1.88,0.357780)(1.9,0.349453)(1.92,0.341144)(1.94,0.332151)(1.96,0.323424)(1.98,0.314126)(2.,0.304808)(2.02,0.295001)(2.04,0.285033)(2.06,0.274657)(2.08,0.263864)(2.1,0.252983)(2.12,0.241590)(2.14,0.229618)(2.16,0.216767)(2.18,0.203619)(2.2,0.189758)(2.22,0.174331)(2.24,0.158087)(2.26,0.139639)(2.28,0.118851)(2.3,0.093790)(2.32,0.059365)(2.34,0.000813)(2.36,0.000137)(2.38,0.000097)(2.4,0.000077)(2.42,0.000065)(2.44,0.000056)(2.46,0.000050)(2.48,0.000045)(2.5,0.000041)(2.52,0.000037)(2.54,0.000034)(2.56,0.000032)(2.58,0.000030)(2.6,0.000028)(2.62,0.000026)(2.64,0.000025)(2.66,0.000024)(2.68,0.000022)(2.7,0.000021)(2.72,0.000020)(2.74,0.000019)(2.76,0.000019)(2.78,0.000018)(2.8,0.000017)(2.82,0.000016)(2.84,0.000016)(2.86,0.000015)(2.88,0.000015)(2.9,0.000014)
      };
      \legend{ {Empirical eigenvalue distribution},{Limiting density $\check{p}_\rho$} }
    \end{axis}
  \end{tikzpicture}
  \caption{Histogram of the eigenvalues of $\check{C}_N$ (Chen type) for $n=2048$, $N=256$, $C_N=\frac13\diag(I_{128},5I_{128})$, $\rho=0.2$, versus limiting eigenvalue distribution.}
  \label{fig:hist_checkC}
\end{figure}

\begin{figure}[h!]
  \centering
  \begin{tikzpicture}[font=\footnotesize]
    \renewcommand{\axisdefaulttryminticks}{4} 
    \tikzstyle{every major grid}+=[style=densely dashed]       
    \tikzstyle{every axis y label}+=[yshift=-10pt] 
    \tikzstyle{every axis x label}+=[yshift=5pt]
    \tikzstyle{every axis legend}+=[cells={anchor=west},fill=white,
        at={(0.98,0.98)}, anchor=north east, font=\scriptsize ]
    \begin{axis}[
      %ybar,
      xmin=0.6,
      ymin=0,
      xmax=2.8,
      ymax=1.3,
      bar width=1.3pt,
      grid=major,
      ymajorgrids=false,
      scaled ticks=true,
      %scale ticks above={4},
      xlabel={Eigenvalues},
      ylabel={Density}
      ]
      \addplot+[ybar,mark=none,color=black,fill=gray!40!white,area legend] coordinates{
	      (0.0125,0.000000)(0.0375,0.000000)(0.0625,0.000000)(0.0875,0.000000)(0.1125,0.000000)(0.1375,0.000000)(0.1625,0.000000)(0.1875,0.000000)(0.2125,0.000000)(0.2375,0.000000)(0.2625,0.000000)(0.2875,0.000000)(0.3125,0.000000)(0.3375,0.000000)(0.3625,0.000000)(0.3875,0.000000)(0.4125,0.000000)(0.4375,0.000000)(0.4625,0.000000)(0.4875,0.000000)(0.5125,0.000000)(0.5375,0.000000)(0.5625,0.000000)(0.5875,0.000000)(0.6125,0.000000)(0.6375,0.000000)(0.6625,0.000000)(0.6875,0.000000)(0.7125,20.000000)(0.7375,0.000000)(0.7625,0.703125)(0.7875,1.171875)(0.8125,1.015625)(0.8375,0.937500)(0.8625,0.859375)(0.8875,0.703125)(0.9125,0.703125)(0.9375,0.585938)(0.9625,0.585938)(0.9875,0.468750)(1.0125,0.507812)(1.0375,0.507812)(1.0625,0.390625)(1.0875,0.468750)(1.1125,0.429688)(1.1375,0.351562)(1.1625,0.351562)(1.1875,0.351562)(1.2125,0.351562)(1.2375,0.312500)(1.2625,0.312500)(1.2875,0.312500)(1.3125,0.312500)(1.3375,0.234375)(1.3625,0.273438)(1.3875,0.273438)(1.4125,0.234375)(1.4375,0.234375)(1.4625,0.234375)(1.4875,0.273438)(1.5125,0.234375)(1.5375,0.234375)(1.5625,0.156250)(1.5875,0.234375)(1.6125,0.195312)(1.6375,0.156250)(1.6625,0.234375)(1.6875,0.156250)(1.7125,0.195312)(1.7375,0.156250)(1.7625,0.195312)(1.7875,0.156250)(1.8125,0.156250)(1.8375,0.156250)(1.8625,0.117188)(1.8875,0.156250)(1.9125,0.156250)(1.9375,0.117188)(1.9625,0.156250)(1.9875,0.117188)(2.0125,0.156250)(2.0375,0.078125)(2.0625,0.156250)(2.0875,0.117188)(2.1125,0.156250)(2.1375,0.078125)(2.1625,0.117188)(2.1875,0.117188)(2.2125,0.078125)(2.2375,0.117188)(2.2625,0.078125)(2.2875,0.078125)(2.3125,0.078125)(2.3375,0.117188)(2.3625,0.039062)(2.3875,0.078125)(2.4125,0.000000)(2.4375,0.117188)(2.4625,0.039062)(2.4875,0.039062)(2.5125,0.078125)(2.5375,0.039062)(2.5625,0.039062)(2.5875,0.039062)(2.6125,0.039062)(2.6375,0.039062)(2.6625,0.000000)(2.6875,0.000000)(2.7125,0.000000)(2.7375,0.000000)(2.7625,0.000000)(2.7875,0.000000)(2.8125,0.000000)(2.8375,0.000000)(2.8625,0.000000)(2.8875,0.000000)(2.9125,0.000000)(2.9375,0.000000)(2.9625,0.000000)(2.9875,0.000000)(3.0125,0.000000)(3.0375,0.000000)(3.0625,0.000000)(3.0875,0.000000)(3.1125,0.000000)(3.1375,0.000000)(3.1625,0.000000)(3.1875,0.000000)(3.2125,0.000000)(3.2375,0.000000)(3.2625,0.000000)(3.2875,0.000000)
      };
      \addplot[black,line width=1pt] plot coordinates{
	      (0.,0.000044)(0.75,0.00086)(0.76,0.802005)(0.77,1.105944)(0.78,1.155881)(0.79,1.143222)(0.8,1.103494)(0.81,1.054911)(0.82,1.008544)(0.83,0.962583)(0.84,0.917172)(0.85,0.875408)(0.86,0.836815)(0.87,0.801003)(0.88,0.767822)(0.89,0.736846)(0.9,0.708273)(0.91,0.681838)(0.92,0.657568)(0.93,0.634491)(0.94,0.613565)(0.95,0.594140)(0.96,0.575925)(0.97,0.558761)(0.98,0.541929)(0.99,0.526678)(1.,0.512789)(1.01,0.499662)(1.02,0.487175)(1.03,0.475235)(1.04,0.463178)(1.05,0.453267)(1.06,0.443045)(1.07,0.433495)(1.08,0.423618)(1.09,0.415546)(1.1,0.407080)(1.11,0.399199)(1.12,0.390850)(1.13,0.383566)(1.14,0.376399)(1.15,0.370373)(1.16,0.363159)(1.17,0.356890)(1.18,0.351585)(1.19,0.345065)(1.2,0.339433)(1.21,0.334727)(1.22,0.328755)(1.23,0.323750)(1.24,0.319158)(1.25,0.314025)(1.26,0.309659)(1.27,0.305251)(1.28,0.300588)(1.29,0.296641)(1.3,0.292187)(1.31,0.288399)(1.32,0.284808)(1.33,0.280285)(1.34,0.276547)(1.35,0.273605)(1.36,0.269516)(1.37,0.266482)(1.38,0.263017)(1.39,0.259442)(1.4,0.256118)(1.41,0.253437)(1.42,0.250201)(1.43,0.246605)(1.44,0.244041)(1.45,0.241169)(1.46,0.237913)(1.47,0.235087)(1.48,0.232227)(1.49,0.229727)(1.5,0.227468)(1.51,0.224380)(1.52,0.221738)(1.53,0.219117)(1.54,0.216652)(1.55,0.214656)(1.56,0.212431)(1.57,0.209883)(1.58,0.207579)(1.59,0.205396)(1.6,0.202862)(1.61,0.200588)(1.62,0.198183)(1.63,0.196472)(1.64,0.193702)(1.65,0.191859)(1.66,0.190038)(1.67,0.188088)(1.68,0.185943)(1.69,0.183376)(1.7,0.182022)(1.71,0.180101)(1.72,0.177791)(1.73,0.176217)(1.74,0.174006)(1.75,0.171846)(1.76,0.170589)(1.77,0.168069)(1.78,0.166423)(1.79,0.164441)(1.8,0.163304)(1.81,0.161117)(1.82,0.159263)(1.83,0.157998)(1.84,0.155676)(1.85,0.153970)(1.86,0.152922)(1.87,0.151279)(1.88,0.149022)(1.89,0.147277)(1.9,0.146309)(1.91,0.144569)(1.92,0.142886)(1.93,0.140828)(1.94,0.139493)(1.95,0.137950)(1.96,0.136367)(1.97,0.134442)(1.98,0.133248)(1.99,0.131764)(2.,0.130347)(2.01,0.128307)(2.02,0.127171)(2.03,0.125676)(2.04,0.124310)(2.05,0.122216)(2.06,0.120723)(2.07,0.119317)(2.08,0.117855)(2.09,0.116975)(2.1,0.115496)(2.11,0.114050)(2.12,0.111899)(2.13,0.110904)(2.14,0.109655)(2.15,0.107994)(2.16,0.106007)(2.17,0.105015)(2.18,0.103320)(2.19,0.102103)(2.2,0.100662)(2.21,0.098675)(2.22,0.097874)(2.23,0.096339)(2.24,0.095117)(2.25,0.092872)(2.26,0.091723)(2.27,0.090255)(2.28,0.089299)(2.29,0.086958)(2.3,0.086372)(2.31,0.084776)(2.32,0.083421)(2.33,0.081766)(2.34,0.080441)(2.35,0.078938)(2.36,0.076677)(2.37,0.075427)(2.38,0.074217)(2.39,0.072799)(2.4,0.070655)(2.41,0.069295)(2.42,0.067368)(2.43,0.065737)(2.44,0.064076)(2.45,0.062894)(2.46,0.060856)(2.47,0.059282)(2.48,0.057244)(2.49,0.055301)(2.5,0.054208)(2.51,0.051773)(2.52,0.050303)(2.53,0.048544)(2.54,0.046171)(2.55,0.044125)(2.56,0.041608)(2.57,0.039741)(2.58,0.036803)(2.59,0.034725)(2.6,0.031561)(2.61,0.028610)(2.62,0.025471)(2.63,0.021917)(2.64,0.017521)(2.65,0.011755)(2.66,0.005971)(2.67,0.001816)(2.68,0.000375)(2.69,0.000097)(2.7,0.000049)(2.71,0.000035)(2.72,0.000030)(2.73,0.000028)(2.74,0.000026)(2.75,0.000024)(2.76,0.000023)(2.77,0.000022)(2.78,0.000021)(2.79,0.000020)(2.8,0.000019)(2.81,0.000019)(2.82,0.000018)(2.83,0.000017)(2.84,0.000017)(2.85,0.000016)(2.86,0.000016)(2.87,0.000015)(2.88,0.000015)(2.89,0.000015)(2.9,0.000014)(2.91,0.000014)(2.92,0.000014)(2.93,0.000013)(2.94,0.000013)(2.95,0.000013)(2.96,0.000013)(2.97,0.000012)(2.98,0.000012)(2.99,0.000012)(3.,0.000012)(3.01,0.000011)(3.02,0.000011)(3.03,0.000011)(3.04,0.000011)(3.05,0.000011)(3.06,0.000010)(3.07,0.000010)(3.08,0.000010)(3.09,0.000010)(3.1,0.000010)(3.11,0.000010)(3.12,0.000010)(3.13,0.000009)(3.14,0.000009)(3.15,0.000009)(3.16,0.000009)(3.17,0.000009)(3.18,0.000009)(3.19,0.000009)(3.2,0.000009)(3.21,0.000008)(3.22,0.000008)(3.23,0.000008)(3.24,0.000008)(3.25,0.000008)(3.26,0.000008)(3.27,0.000008)(3.28,0.000008)(3.29,0.000008)
      };
      \addplot[black,line width=0.2pt,mark=*,only marks] plot coordinates{
      		(0.7242,0)
      };
      \legend{ {Empirical eigenvalue distribution},{Limiting density $\check{p}_\rho$},{Dirac mass $1/2$}}
    \end{axis}
  \end{tikzpicture}
  \caption{Histogram of the eigenvalues of $\check{C}_N$ (Chen type) for $n=512$, $N=1024$, $C_N=\frac13\diag(I_{128},5I_{128})$, $\rho=0.8$, versus limiting eigenvalue distribution.}
  \label{fig:hist_checkC_N>n}
\end{figure}

From Corollary~\ref{cor:limit}, $\hat{\mu}_\rho$ is continuous on $(\rho,\infty)$ so that $\hat{\mu}_\rho(dx)=\hat{p}_\rho(x)dx$ where, from the inverse Stieltjes transform formula (see e.g., \citep{SIL06}) for all $x\in (\rho,\infty)$,
\begin{align*}
	\hat{p}_\rho(x) &= \lim_{\varepsilon\to 0}\frac1\pi \Im\left[ m_{\hat{\mu}_\rho}(x+\imath \varepsilon)\right].
\end{align*}
Letting $\varepsilon>0$ small and approximating $\hat{p}_\rho(x)$ by $\frac1\pi \Im[ m_{\hat{\mu}_\rho}(x+\imath \varepsilon)]$ allows one to depict $\hat{p}_\rho$ approximately. Similarly, $\check{\mu}_\rho(dx)=\check{p}_\rho(x)dx$ for all $x\in (T_\rho(1-\rho+T_\rho)^{-1},\infty)$ which can be obtained equivalently. This is performed in Figure~\ref{fig:hist_hatC} and Figure~\ref{fig:hist_checkC} which depict the histogram of the eigenvalues of $\hat{C}_N(\rho)$ and $\check{C}_N(\rho)$ for $\rho=0.2$, $N=256$, $n=2048$, $C_N=\diag(I_{128},5I_{128})$, versus their limiting distributions for $c=1/8$. Figure~\ref{fig:hist_checkC_N>n} depicts $\check{C}_N(\rho)$ for $\rho=0.8$, $N=1024$, $n=512$, $C_N=\diag(I_{128},5I_{128})$ versus its limiting distribution for $c=2$. Note that, when $c=1/8$, the eigenvalues of $\check{C}_N(\rho)$ concentrate in two bulks close to $1/3$ and $5/3$, as expected. Due to the different trace normalization of $\hat{C}_N(\rho)$, the same reasoning holds up to a multiplicative constant. However, when $c=2$, the eigenvalues of $\check{C}_N(\rho)$ are quite remote from masses in $1/3$ and $5/3$, an observation known since \citep{MAR67}.

Another corollary of Theorem~\ref{th:Chitour} and Theorem~\ref{th:Wiesel} is the joint convergence (over both $\rho$ and the eigenvalue index) of the individual eigenvalues of $\hat{C}_N(\rho)$ to those of $\hat{S}_N(\rho)$ and of the individual eigenvalues of $\check{C}_N(\rho)$ to those of $\check{S}_N(\rho)$, as well as the joint convergence over $\rho$ of the moments of the empirical spectral distributions of $\hat{C}_N(\rho)$ and $\check{C}_N(\rho)$. These joint convergence properties are fundamental in problems of optimization of the parameter $\rho$ as discussed in Section~\ref{sec:shrink}.

\begin{corollary}[Joint convergence properties]
	\label{cor:joint}
	Under the settings of Theorem~\ref{th:Chitour} and Theorem~\ref{th:Wiesel}, 
	\begin{align*}
		\sup_{\rho\in\hat{\mathcal R}_\varepsilon}\max_{1\leq i\leq n} \left| \lambda_i(\hat{C}_N(\rho)) - \lambda_i(\hat{S}_N(\rho)) \right| &\asto 0 \\
		\sup_{\rho\in\check{\mathcal R}_\varepsilon}\max_{1\leq i\leq n} \left| \lambda_i(\check{C}_N(\rho)) - \lambda_i(\check{S}_N(\rho)) \right| &\asto 0.
	\end{align*}
	This result implies 
	\begin{align*}
		\limsup_N \sup_{\rho\in\hat{\mathcal R}_\varepsilon}\Vert \hat{C}_N(\rho)\Vert &<\infty \\
		\limsup_N \sup_{\rho\in\check{\mathcal R}_\varepsilon} \Vert \check{C}_N(\rho)\Vert &<\infty.
	\end{align*}
	almost surely. This, and the weak convergence of Corollary~\ref{cor:limit}, in turn induce that, for each $\ell\in \NN$,
	\begin{align*}
		\sup_{\rho\in\hat{\mathcal R}_\varepsilon} \left|\frac1N\tr \left(\hat{C}_N(\rho)^\ell\right) - M_{\hat{\mu}_\rho,\ell} \right| &\asto 0 \\
		\sup_{\rho\in\check{\mathcal R}_\varepsilon} \left|\frac1N\tr \left(\check{C}_N(\rho)^\ell\right) - M_{\check{\mu}_\rho,\ell} \right| &\asto 0
	\end{align*}
where we recall that $M_{\mu,\ell}=\int t^\ell \mu(dt)\in(0,\infty]$ for any probability measure $\mu$ with support in $\RR^+$; in particular, $M_{\hat{\mu}_\rho,1}=\frac1{\hat\gamma(\rho)}\frac{1-\rho}{1-(1-\rho)c}+\rho$ and $M_{\check{\mu}_\rho,1}=1$.
\end{corollary}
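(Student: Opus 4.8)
The plan is to obtain the three displayed conclusions in turn, each essentially as a consequence of Theorems~\ref{th:Chitour}--\ref{th:Wiesel}, Corollary~\ref{cor:limit}, and routine perturbation and sample-covariance estimates.

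\emph{Eigenvalue estimates and uniform norm bounds.} By Weyl's inequality, for Hermitian $N\times N$ matrices $X,Y$ one has $\max_i\vert\lambda_i(X)-\lambda_i(Y)\vert\le\Vert X-Y\Vert$, so $\sup_{\rho\in\hat{\mathcal R}_\varepsilon}\max_i\vert\lambda_i(\hat C_N(\rho))-\lambda_i(\hat S_N(\rho))\vert\le\sup_{\rho\in\hat{\mathcal R}_\varepsilon}\Vert\hat C_N(\rho)-\hat S_N(\rho)\Vert\asto0$ by Theorem~\ref{th:Chitour}, and likewise for $\check C_N(\rho)$ by Theorem~\ref{th:Wiesel}. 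For the norm bounds, $\Vert\hat C_N(\rho)\Vert\le\Vert\hat S_N(\rho)\Vert+\Vert\hat C_N(\rho)-\hat S_N(\rho)\Vert$, where the second term is uniformly negligible and $\hat S_N(\rho)=a(\rho)\frac1n\sum_i z_iz_i^*+\rho I_N$ with $a(\rho)=\frac1{\hat\gamma(\rho)}\frac{1-\rho}{1-(1-\rho)c}$. On the compact set $\hat{\mathcal R}_\varepsilon\subset(0,1]$ the quantity $1-(1-\rho)c$ is bounded away from $0$ --- precisely the reason $\hat{\mathcal R}_\varepsilon$ is defined to exclude a neighborhood of $\max\{0,1-c^{-1}\}$ --- while $\hat\gamma$ is continuous and positive by Theorem~\ref{th:Chitour}; hence $a$ is continuous, thus bounded, on $\hat{\mathcal R}_\varepsilon$. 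Since $\frac1n\sum_i z_iz_i^*$ is a sample covariance matrix whose population $C_N$ has $\limsup_N\Vert C_N\Vert<\infty$, its spectral norm is almost surely bounded uniformly in $N$ (a standard random matrix fact), so $\limsup_N\sup_{\rho\in\hat{\mathcal R}_\varepsilon}\Vert\hat C_N(\rho)\Vert<\infty$ almost surely. The Chen case is analogous, writing $\check S_N(\rho)=b_1(\rho)\frac1n\sum_i z_iz_i^*+b_0(\rho)I_N$ with $b_1(\rho)=\frac{1-\rho}{1-\rho+T_\rho}$ and $b_0(\rho)=\frac{T_\rho}{1-\rho+T_\rho}$, which are nonnegative, sum to one, and are continuous on $\check{\mathcal R}_\varepsilon=[\varepsilon,1]$ ($T_\rho>0$ there, and $\check\gamma$ is continuous by Theorem~\ref{th:Wiesel}), so that in fact $\Vert\check S_N(\rho)\Vert\le\max\{\Vert\frac1n\sum_i z_iz_i^*\Vert,1\}$.

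\emph{Moment convergence.} Fix $\ell\in\NN$. For each fixed $\rho$, the weak convergence of Corollary~\ref{cor:limit} together with the uniform boundedness of the eigenvalues just obtained gives $\frac1N\tr(\hat C_N(\rho)^\ell)\asto M_{\hat\mu_\rho,\ell}$, which is finite because $\hat\mu_\rho$ has compact support. The only delicate point is to upgrade this to convergence \emph{uniform} in $\rho$. I would first transfer the problem to $\hat S_N(\rho)$ using the telescoping identity $X^\ell-Y^\ell=\sum_{j=0}^{\ell-1}X^j(X-Y)Y^{\ell-1-j}$ and $\vert\frac1N\tr M\vert\le\Vert M\Vert$, which yield $\sup_\rho\big\vert\frac1N\tr(\hat C_N(\rho)^\ell)-\frac1N\tr(\hat S_N(\rho)^\ell)\big\vert\le\ell\,\kappa_N^{\ell-1}\sup_\rho\Vert\hat C_N(\rho)-\hat S_N(\rho)\Vert\asto0$, with $\kappa_N=\sup_\rho\max\{\Vert\hat C_N(\rho)\Vert,\Vert\hat S_N(\rho)\Vert\}$ almost surely bounded. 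For $\hat S_N(\rho)$ the $\rho$-dependence is explicit: $\frac1N\tr(\hat S_N(\rho)^\ell)=\sum_{k=0}^\ell\binom\ell k a(\rho)^k\rho^{\ell-k}\,\frac1N\tr\big((\frac1n\sum_i z_iz_i^*)^k\big)$, where each coefficient $a(\rho)^k\rho^{\ell-k}$ is bounded on $\hat{\mathcal R}_\varepsilon$ and each $\frac1N\tr\big((\frac1n\sum_i z_iz_i^*)^k\big)\asto m_k$ for a deterministic $m_k$ (moment convergence of the classical sample covariance model, again via its almost surely bounded support), whence $\sup_\rho\big\vert\frac1N\tr(\hat S_N(\rho)^\ell)-\sum_{k=0}^\ell\binom\ell k a(\rho)^k\rho^{\ell-k}m_k\big\vert\asto0$. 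Letting $N\to\infty$ for fixed $\rho$ and using the preceding pointwise limit forces $\sum_{k=0}^\ell\binom\ell k a(\rho)^k\rho^{\ell-k}m_k=M_{\hat\mu_\rho,\ell}$, and combining the two uniform bounds yields $\sup_{\rho\in\hat{\mathcal R}_\varepsilon}\big\vert\frac1N\tr(\hat C_N(\rho)^\ell)-M_{\hat\mu_\rho,\ell}\big\vert\asto0$. The Chen statement follows identically with $b_0(\rho),b_1(\rho)$ replacing $\rho,a(\rho)$.

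\emph{First moments and the main obstacle.} Taking $\ell=1$, $\frac1N\tr\big(\frac1n\sum_i z_iz_i^*\big)=\frac1N\tr\big((\frac1n\sum_i y_iy_i^*)C_N\big)\asto\frac1N\tr C_N=1$ by concentration of the quadratic forms $y_i^*C_Ny_i$ about $\tr C_N$ (recall $\EE[y_iy_i^*]=I_{\bar N}$), giving $M_{\hat\mu_\rho,1}=a(\rho)+\rho=\frac1{\hat\gamma(\rho)}\frac{1-\rho}{1-(1-\rho)c}+\rho$ and $M_{\check\mu_\rho,1}=b_1(\rho)+b_0(\rho)=1$. The sole non-mechanical step above is the passage from pointwise to uniform convergence of the normalized traces; it is defused by transferring the question to the explicit matrices $\hat S_N(\rho)$ and $\check S_N(\rho)$, whose dependence on $\rho$ is confined to finitely many continuous bounded scalar functions multiplying the $\rho$-free, almost surely convergent traces $\frac1N\tr\big((\frac1n\sum_i z_iz_i^*)^k\big)$. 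Everything else is Weyl's inequality, the two theorems, and standard spectral-norm and moment bounds for sample covariance matrices.
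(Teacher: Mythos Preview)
Your argument is correct. The eigenvalue and norm steps match the paper's (Weyl's inequality plus $\limsup_N\Vert\frac1n\sum_i z_iz_i^*\Vert<\infty$), but for the uniform moment convergence you take a genuinely different and more elementary route. The paper works with Stieltjes transforms: it establishes pointwise convergence of $m_N(z;\rho)\to m_{\hat\mu_\rho}(z)$, observes that after the change of variable $z\mapsto\hat z(\rho)$ this is a family of analytic functions uniformly bounded on compact sets away from the limiting spectrum, invokes Vitali's theorem to obtain convergence uniform in $(z,\rho)$, and then recovers the moments via Cauchy integrals $\oint z^\ell m_N(z;\rho)\,dz$. You instead exploit the concrete algebraic form $\hat S_N(\rho)=a(\rho)W+\rho I_N$ with $W=\frac1n\sum_i z_iz_i^*$: since $I_N$ commutes with $W$, the binomial expansion separates $\rho$-dependence (finitely many bounded continuous coefficients $a(\rho)^k\rho^{\ell-k}$) from randomness (the $\rho$-free traces $\frac1N\tr W^k$), making uniformity in $\rho$ automatic. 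Your approach is shorter and avoids complex analysis entirely, at the cost of relying on the very specific affine-in-$W$ structure of $\hat S_N(\rho)$ and $\check S_N(\rho)$; the paper's Vitali argument would extend more readily to approximating matrices with less explicit $\rho$-dependence.
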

\begin{proof}
	The proof is provided in Section~\ref{sec:proof_joint}.
\end{proof}

\section{Application to optimal shrinkage}
\label{sec:shrink}

We now apply Theorems~\ref{th:Chitour} and \ref{th:Wiesel} to the problem of optimal linear shrinkage, originally considered in \citep{LED04} for the simpler sample covariance matrix model. The optimal linear shrinkage problem consists in choosing $\rho$ to be such that a certain distance metric between $\hat{C}_N(\rho)$ (or $\check{C}_N(\rho)$) and $C_N$ is minimized, therefore allowing for a more appropriate estimation of $C_N$ via $\hat{C}_N(\rho)$ or $\check{C}_N(\rho)$.
The metric selected here is the squared Frobenius norm of the difference between the (possibly scaled) robust estimators and $C_N$, which has the advantage of being a widespread matrix distance (e.g., as considered in \citep{LED04}) and a metric amenable to mathematical analysis.\footnote{Alternative metrics (such as the geodesic distance on the cone of nonnegative definite matrices) can be similarly considered. The appropriate choice of such a metric heavily depends on the ultimate problem to optimize.}
In \citep{CHE11}, the authors studied this problem in the specific case of $\check{C}_N(\rho)$ but did not find an expression for the optimal theoretical $\rho$ due to the involved structure of $\check{C}_N(\rho)$ for all finite $N,n$ and therefore resorted to solving an approximate problem, the solution of which is denoted here $\check\rho_O$. Instead, we show that for large $N,n$ values the optimal $\rho$ under study converges to a limiting value $\check\rho^\star$ that takes an extremely simple explicit expression and a similar result holds for $\hat{C}_N(\rho)$ for which an equivalent optimal $\hat{\rho}^\star$ is defined.

Our first result is a lemma of fundamental importance which demonstrates that, up to a change in the variable $\rho$, $\hat{S}_N(\rho)/M_{\hat\mu_{\rho},1}$ and $\check{S}_N(\rho)$ (constructed from the samples $x_1,\ldots,x_n$) are completely equivalent to the original Ledoit--Wolf linear shrinkage model for the (non observable) samples $z_1,\ldots,z_n$.

\begin{lemma}[Model Equivalence]
	\label{lem:equivalent}
For each $\rho\in (0,1]$, there exist unique $\hat\rho\in (\max\{0,1-c^{-1}\},1]$ and $\check\rho\in (0,1]$ such that
\begin{align*}
	\frac{\hat{S}_N(\hat\rho)}{M_{\hat\mu_{\hat\rho},1}}=\check{S}_N(\check\rho) &= (1-\rho) \frac1n\sum_{i=1}^n z_iz_i^* + \rho I_N.
\end{align*}
Besides, the maps $(0,1]\to (\max\{0,1-c^{-1}\},1]$, $\rho\mapsto \hat\rho$ and $(0,1]\to (0,1]$, $\rho\mapsto \check\rho$ thus defined are continuously increasing and onto.
\end{lemma}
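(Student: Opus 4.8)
The plan is to reduce the statement to the monotonicity and invertibility of two scalar reparametrisation maps. Write $\Sigma=\frac1n\sum_{i=1}^nz_iz_i^*$. By Theorem~\ref{th:Chitour} and Corollary~\ref{cor:joint}, $\hat S_N(\hat\rho)=a(\hat\rho)\Sigma+\hat\rho I_N$ and $M_{\hat\mu_{\hat\rho},1}=a(\hat\rho)+\hat\rho$ with $a(\hat\rho)=\hat\gamma(\hat\rho)^{-1}(1-\hat\rho)/(1-(1-\hat\rho)c)$, which is finite and positive on $(\max\{0,1-c^{-1}\},1)$ and vanishes at $\hat\rho=1$; hence $\hat S_N(\hat\rho)/M_{\hat\mu_{\hat\rho},1}=(1-\hat q(\hat\rho))\Sigma+\hat q(\hat\rho)I_N$ with $\hat q(\hat\rho)=\hat\rho/(a(\hat\rho)+\hat\rho)$. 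Likewise $\check S_N(\check\rho)=(1-\check q(\check\rho))\Sigma+\check q(\check\rho)I_N$ with $\check q(\check\rho)=T_{\check\rho}/(1-\check\rho+T_{\check\rho})$. Almost surely $\Sigma$ is not a scalar matrix for any $N$, so two matrices of the form $\alpha\Sigma+\beta I_N$ coincide iff $(\alpha,\beta)$ coincide; since the two coefficient pairs above each sum to one, the asserted identities are equivalent to the scalar equations $\hat q(\hat\rho)=\rho$ and $\check q(\check\rho)=\rho$. It therefore suffices to prove that $\hat q\colon(\max\{0,1-c^{-1}\},1]\to(0,1]$ and $\check q\colon(0,1]\to(0,1]$ are continuous, strictly increasing and onto; the inverses $\rho\mapsto\hat\rho=\hat q^{-1}(\rho)$ and $\rho\mapsto\check\rho=\check q^{-1}(\rho)$ then automatically have the same properties, which is the conclusion of the lemma.

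For $\hat q=(1+a(\hat\rho)/\hat\rho)^{-1}$ it is enough to show $a(\hat\rho)/\hat\rho$ is continuous and strictly decreasing from $+\infty$ to $0$. I would change variables to $u=1-\hat\rho\in[0,\min\{1,c^{-1}\})$ and $b=\hat\rho\,\hat\gamma(\hat\rho)>0$, under which the fixed-point equation of Theorem~\ref{th:Chitour} becomes $1=\int\frac{t}{b+ut}\,\nu(dt)$ and $a(\hat\rho)/\hat\rho=u/(b(u)(1-uc))$. Since the right-hand side of this equation is strictly decreasing in both $b$ and $u$, the map $u\mapsto b(u)$ is strictly decreasing, with $b(0)=1$ (using $\int t\,\nu(dt)=1$); hence $u\mapsto b(u)(1-uc)$ is strictly decreasing, equal to $1$ at $u=0$ and tending to $0$ as $u\to\min\{1,c^{-1}\}$ (since $b(u)\to0$ when $c\le1$, and $1-uc\to0$ with $b$ bounded when $c>1$). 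Therefore $a(\hat\rho)/\hat\rho=u/(b(u)(1-uc))$ is continuous and strictly increasing in $u$ from $0$ to $+\infty$, i.e. strictly decreasing in $\hat\rho$, which gives the claim for $\hat q$.

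For $\check q$ I would first simplify $T_{\check\rho}$. Put $s=\check\rho-c(1-\check\rho)$; since $F(\check\gamma(\check\rho);\check\rho)$ solves $F^2-sF=(1-\check\rho)/\check\gamma(\check\rho)$, setting $G(\check\rho)=F(\check\gamma(\check\rho);\check\rho)-s=\sqrt{\frac{s^2}{4}+\frac{1-\check\rho}{\check\gamma(\check\rho)}}-\frac{s}{2}>0$ on $(0,1)$ gives $F=G+s$, $FG=(1-\check\rho)/\check\gamma(\check\rho)$, hence $T_{\check\rho}=\check\rho\check\gamma(\check\rho)F=\check\rho(1-\check\rho)/G(\check\rho)$ and $\check q(\check\rho)=\check\rho/(G(\check\rho)+\check\rho)$, exactly the structure of $\hat q$. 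Moreover $(1-\check\rho)c+F=G+\check\rho$, so the defining equation of $\check\gamma$ reads $1=\int\frac{t}{\check\gamma\check\rho+\frac{1-\check\rho}{G+\check\rho}t}\,\nu(dt)$ while the quadratic reads $\check\gamma G(G+s)=1-\check\rho$. Introducing $q=\check q(\check\rho)$ and the auxiliary variable $r=q(1-\check\rho)/\check\rho$ (so $\check\rho=q/(q+r)$) and eliminating $\check\gamma$ and $G$ from these two relations collapses everything to the single relation $q/(1-q)=K(r)(1-cr)$, where $K(r)$ denotes the decreasing inverse of $K\mapsto\int\frac{t}{K+t}\,\nu(dt)$ and $r$ ranges over $(0,\min\{1,c^{-1}\})$. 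Both $K(r)$ and $1-cr$ are positive and strictly decreasing in $r$, hence so are $q=q(r)$ and $\check\rho=q(r)/(q(r)+r)$; therefore, on eliminating $r$, $\check\rho\mapsto q$ is strictly increasing, and the limits $(q,\check\rho)\to(1,1)$ as $r\to0$ and $(q,\check\rho)\to(0,0)$ as $r\to\min\{1,c^{-1}\}$ identify $\check q$ as a continuous strictly increasing bijection onto $(0,1]$.

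The step I expect to be the main obstacle is the Chen case: unlike in the Abramovich--Pascal analysis, where $\hat\rho$ enters the auxiliary equation for $b$ only linearly, here $\check\gamma(\check\rho)$ and $F(\check\gamma(\check\rho);\check\rho)$ are tied together by a genuinely nonlinear fixed point, and the reduction hinges on spotting the right variables ($G$, $q$, and above all $r=q(1-\check\rho)/\check\rho$) that linearise the system. One must also verify that this elimination is reversible, so that no spurious branches are introduced, and control the boundary behaviour of $\check\gamma(\check\rho)$ as $\check\rho\to0$, which is what forces $\check q(0^+)=0$. Once $\hat q$ and $\check q$ are known to be continuous strictly increasing bijections onto $(0,1]$, the lemma follows by setting $\hat\rho=\hat q^{-1}(\rho)$ and $\check\rho=\check q^{-1}(\rho)$.
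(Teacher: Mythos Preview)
Your reduction to showing that the scalar reparametrisation maps $\hat q$ and $\check q$ are continuous increasing bijections onto $(0,1]$ is exactly the paper's strategy, and your treatment of $\hat q$ via the substitution $u=1-\hat\rho$, $b=\hat\rho\hat\gamma(\hat\rho)$ is essentially the paper's argument (which observes directly that $\rho\mapsto\rho\hat\gamma(\rho)$ is increasing and handles the lower limit by contradiction), only packaged a little more cleanly.

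Where you genuinely diverge is in the Chen case. The paper does not attempt an explicit parametrisation: it rewrites the defining equation of $\check\gamma(\rho)$ entirely in terms of $\check f(\rho)=\check q(\rho)$ and $\rho$, obtaining
\[
1=\int\frac{t}{\frac{\check f(\rho)}{\rho-c(1-\rho)\check f(\rho)}\cdot\frac{(1-\rho)\check f(\rho)}{\rho(1-\check f(\rho))}+\frac{1-\rho}{\rho}\check f(\rho)\,t}\,\nu(dt),
\]
and then argues by contradiction: if $\check f$ were decreasing on some interval, each positive factor in the denominator would be decreasing in $\rho$, forcing the integral to be strictly increasing while it is constant equal to one. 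Your route---introducing $G=F-s$, recognising $\check q=\check\rho/(G+\check\rho)$, and then collapsing everything to $q/(1-q)=K(r)(1-cr)$ via $r=q(1-\check\rho)/\check\rho$---is more constructive: once the right variables are found, monotonicity is immediate because both $K(r)$ and $1-cr$ are positive decreasing, and the graph $(\check\rho(r),q(r))$ is traced out monotonically. The paper's argument is more mechanical and requires no inspired substitution; yours is slicker but depends on spotting the parametrisation by $r$, which is precisely the obstacle you flagged. One minor imprecision: the upper limit of $r$ is $\min\{1-\nu(\{0\}),c^{-1}\}$ rather than $\min\{1,c^{-1}\}$ in general, since $\int t/(K+t)\,\nu(dt)\to 1-\nu(\{0\})$ as $K\to0$; but your limit computations for $q$ and $\check\rho$ survive this correction unchanged, so the conclusion stands.
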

\begin{proof}
	The proof is provided in Section~\ref{sec:proof_lemma}.
\end{proof}

Along with Theorem~\ref{th:Chitour} and Theorem~\ref{th:Wiesel}, Lemma~\ref{lem:equivalent} indicates that, up to a change in the variable $\rho$, $\hat{C}_N(\rho)$ and $\check{C}_N(\rho)$ can be somewhat viewed as asymptotically equivalent (but there is no saying whether they can be claimed equivalent for all finite $N,n$).
As such, thanks to Lemma~\ref{lem:equivalent}, we now show that the optimal shrinkage parameters $\rho$ for both $\hat{C}_N(\rho)/(\frac1N\tr \hat{C}_N(\rho))$ and $\check{C}_N(\rho)$ lead to the same asymptotic performance, which corresponds to the asymptotically optimal Ledoit--Wolf linear shrinkage performance but for the vectors $z_1,\ldots,z_n$.

\begin{proposition}[Optimal Shrinkage]
	\label{prop:shrink}
For each $\rho\in (0,1]$, define\footnote{Recall that, for $A$ Hermitian, $\frac1N\tr (A^2)=\frac1N\tr(AA^*)=\frac1N\Vert A\Vert_F^2$ with $\Vert\cdot\Vert_F$ the Frobenius norm for matrices.}
\begin{align*}
	\hat{D}_N(\rho) &= \frac1N\tr\left(\left( \frac{\hat{C}_N(\rho)}{\frac1N\tr \hat{C}_N(\rho)} - C_N \right)^2\right) \\
	\check{D}_N(\rho) &= \frac1N\tr\left(\left(  \check{C}_N(\rho) - C_N \right)^2\right).
\end{align*}
Also denote $D^\star=c \frac{M_{\nu,2}-1}{c+M_{\nu,2}-1}$, $\rho^\star=\frac{c}{c+M_{\nu,2}-1}$, and $\hat\rho^\star\in(\max\{0,1-c^{-1}\},1]$, $\check{\rho}^\star\in(0,1]$ the unique solutions to
\begin{align*}
	\frac{\hat\rho^\star}{\frac1{\hat\gamma(\hat\rho^\star)}\frac{1-\hat\rho^\star}{1-(1-\hat\rho^\star)c}+\hat\rho^\star} &= \frac{T_{\check\rho^\star}}{1-\check\rho^\star+T_{\check\rho^\star}} = \rho^\star.
\end{align*}
Then, letting $\varepsilon<\min(\hat\rho^\star-\max\{0,1-c^{-1}\},\check\rho^\star)$, under the setting of Theorem~\ref{th:Chitour} and Theorem~\ref{th:Wiesel}, 
\begin{align*}
	\inf_{\rho \in \hat{\mathcal R}_\varepsilon} \hat{D}_N(\rho) &\asto D^\star,\quad \inf_{\rho \in \check{\mathcal R}_\varepsilon} \check{D}_N(\rho) \asto D^\star
\end{align*}
and
\begin{align*}
	\hat{D}_N(\hat\rho^\star) &\asto D^\star,\quad \check{D}_N(\check\rho^\star) \asto D^\star.
\end{align*}
Moreover, letting $\hat\rho_N$ and $\check\rho_N$ be random variables such that $\hat\rho_N\asto \hat\rho^\star$ and $\check\rho_N\asto \check\rho^\star$, 
\begin{align*}
	\hat{D}_N(\hat\rho_N) &\asto D^\star, \quad \check{D}_N(\check\rho_N) \asto D^\star.
\end{align*}
\end{proposition}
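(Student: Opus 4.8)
The idea is to push, uniformly in $\rho$, both estimation problems onto the explicit Ledoit--Wolf model $S_N(u)\triangleq(1-u)\frac1n\sum_{i=1}^n z_iz_i^*+uI_N$ for the unobserved $z_i$'s, by means of Theorems~\ref{th:Chitour}--\ref{th:Wiesel}, Corollary~\ref{cor:joint} and the change of variables of Lemma~\ref{lem:equivalent}, and then to carry out an elementary quadratic optimisation on that model.

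\emph{Step 1 (uniform transfer).} For the Chen estimate, $\frac1N\tr\check{C}_N(\rho)=1$ by construction, $\limsup_N\sup_{\rho\in\check{\mathcal R}_\varepsilon}\Vert\check{C}_N(\rho)\Vert<\infty$ by Corollary~\ref{cor:joint}, and $\limsup_N\Vert C_N\Vert<\infty$ by Assumption~\ref{ass:x}; combining $\sup_{\rho\in\check{\mathcal R}_\varepsilon}\Vert\check{C}_N(\rho)-\check{S}_N(\rho)\Vert\asto0$ with the Hermitian bound $\frac1N|\tr(A^2)-\tr(B^2)|=\frac1N|\tr((A-B)(A+B))|\le\Vert A-B\Vert(\Vert A\Vert+\Vert B\Vert)$ applied to $A=\check{C}_N(\rho)-C_N$, $B=\check{S}_N(\rho)-C_N$ gives $\sup_{\rho\in\check{\mathcal R}_\varepsilon}\big|\check{D}_N(\rho)-\frac1N\tr((\check{S}_N(\rho)-C_N)^2)\big|\asto0$. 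For the Abramovich--Pascal estimate, $\hat{C}_N(\rho)\succeq\rho I_N$ from its fixed-point equation, so $\frac1N\tr\hat{C}_N(\rho)\ge\rho\ge\varepsilon$ on $\hat{\mathcal R}_\varepsilon$, while $\sup_{\rho\in\hat{\mathcal R}_\varepsilon}\big|\frac1N\tr\hat{C}_N(\rho)-M_{\hat\mu_\rho,1}\big|\asto0$ (Corollary~\ref{cor:joint}) with $M_{\hat\mu_\rho,1}\ge\rho$; hence the random normalisations $\frac1N\tr\hat{C}_N(\rho)$ are bounded away from $0$ and from above uniformly in $\rho\in\hat{\mathcal R}_\varepsilon$, and dividing in $\sup_{\rho}\Vert\hat{C}_N(\rho)-\hat{S}_N(\rho)\Vert\asto0$ and applying the same bound gives $\sup_{\rho\in\hat{\mathcal R}_\varepsilon}\big|\hat{D}_N(\rho)-\frac1N\tr((\hat{S}_N(\rho)/M_{\hat\mu_\rho,1}-C_N)^2)\big|\asto0$. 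By Lemma~\ref{lem:equivalent} the maps $\rho\mapsto\hat\rho$ and $\rho\mapsto\check\rho$ there are continuous increasing bijections; let $\psi\colon(\max\{0,1-c^{-1}\},1]\to(0,1]$ and $\check\psi\colon(0,1]\to(0,1]$ be their inverses, so that $\hat{S}_N(\rho)/M_{\hat\mu_\rho,1}=S_N(\psi(\rho))$ and $\check{S}_N(\rho)=S_N(\check\psi(\rho))$ identically in the samples, with $\psi(\rho)=\rho/M_{\hat\mu_\rho,1}$ and $\check\psi(\rho)=T_\rho/(1-\rho+T_\rho)$ by comparison of $I_N$-coefficients. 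Since $M_{\hat\mu_\rho,1}=\frac1{\hat\gamma(\rho)}\frac{1-\rho}{1-(1-\rho)c}+\rho$ (Corollary~\ref{cor:joint}), the definitions of $\hat\rho^\star$ and $\check\rho^\star$ in the statement read precisely $\psi(\hat\rho^\star)=\check\psi(\check\rho^\star)=\rho^\star$.

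\emph{Step 2 (the clean model).} With $R_N=\frac1n\sum_{i=1}^n z_iz_i^*$, the function $u\mapsto\frac1N\tr((S_N(u)-C_N)^2)$ is a quadratic in $u$ whose coefficients are fixed affine combinations of $\frac1N\tr R_N$, $\frac1N\tr(R_N^2)$, $\frac1N\tr(R_NC_N)$, $\frac1N\tr C_N$ and $\frac1N\tr C_N^2$. These converge almost surely: $\frac1N\tr C_N=1$ and $\frac1N\tr C_N^2\to M_{\nu,2}$ by Assumption~\ref{ass:x}; $\frac1N\tr R_N\asto1$ and $\frac1N\tr(R_NC_N)\asto M_{\nu,2}$ by the law of large numbers applied to the i.i.d.\ order-one terms $\frac1N\Vert z_i\Vert^2$ and $\frac1N z_i^*C_Nz_i$ (of means $\frac1N\tr C_N$ and $\frac1N\tr C_N^2$); and $\frac1N\tr(R_N^2)\asto M_{\nu,2}+c$ since $R_N$ is a sample covariance matrix with population spectral measure $\nu$ (standard, from \citep{MAR67,SIL95} together with $\limsup_N\Vert R_N\Vert<\infty$, or directly from $\frac1N\tr(R_N^2)=\frac1{Nn^2}\sum_{i,j}|z_i^*z_j|^2$). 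Therefore $\frac1N\tr((S_N(u)-C_N)^2)\asto g(u)\triangleq(c+M_{\nu,2}-1)u^2-2cu+c$, and since only the coefficients vary with $N$ this convergence is uniform on $[0,1]$; composing with the continuous maps $\psi,\check\psi$ yields $\sup_{\rho\in\hat{\mathcal R}_\varepsilon}|\hat{D}_N(\rho)-g(\psi(\rho))|\asto0$ and $\sup_{\rho\in\check{\mathcal R}_\varepsilon}|\check{D}_N(\rho)-g(\check\psi(\rho))|\asto0$.

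\emph{Step 3 (optimisation and conclusion).} By Jensen and $\int t\,\nu(dt)=1$ one has $M_{\nu,2}\ge1$, so $c+M_{\nu,2}-1>0$ and $g$ is strictly convex with unique minimiser $\rho^\star=\frac{c}{c+M_{\nu,2}-1}\in(0,1]$ and minimum $g(\rho^\star)=\frac{c(M_{\nu,2}-1)}{c+M_{\nu,2}-1}=D^\star$. As $\psi,\check\psi$ are increasing bijections onto $(0,1]$ (in particular $\psi(1)=\check\psi(1)=1$), $\psi(\hat{\mathcal R}_\varepsilon)=[\psi(\varepsilon+\max\{0,1-c^{-1}\}),1]$ and $\check\psi(\check{\mathcal R}_\varepsilon)=[\check\psi(\varepsilon),1]$; the hypothesis $\varepsilon<\min(\hat\rho^\star-\max\{0,1-c^{-1}\},\check\rho^\star)$ gives $\psi(\varepsilon+\max\{0,1-c^{-1}\})\le\psi(\hat\rho^\star)=\rho^\star$ and $\check\psi(\varepsilon)\le\check\psi(\check\rho^\star)=\rho^\star$, so $\rho^\star$ lies in both image intervals. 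Hence $\inf_{\rho\in\hat{\mathcal R}_\varepsilon}\hat{D}_N(\rho)\asto\inf_{u\in\psi(\hat{\mathcal R}_\varepsilon)}g(u)=D^\star$ and $\inf_{\rho\in\check{\mathcal R}_\varepsilon}\check{D}_N(\rho)\asto D^\star$; evaluating at $\hat\rho^\star$ and $\check\rho^\star$ gives $\hat{D}_N(\hat\rho^\star)\asto g(\psi(\hat\rho^\star))=g(\rho^\star)=D^\star$ and $\check{D}_N(\check\rho^\star)\asto D^\star$; and for $\hat\rho_N\asto\hat\rho^\star$ (which then lies in $\hat{\mathcal R}_\varepsilon$ for all large $N$) the bound $|\hat{D}_N(\hat\rho_N)-D^\star|\le\sup_{\rho\in\hat{\mathcal R}_\varepsilon}|\hat{D}_N(\rho)-g(\psi(\rho))|+|g(\psi(\hat\rho_N))-g(\psi(\hat\rho^\star))|$ together with continuity of $g\circ\psi$ gives $\hat{D}_N(\hat\rho_N)\asto D^\star$, and identically $\check{D}_N(\check\rho_N)\asto D^\star$. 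The main obstacle is Step~1: handling the random trace normalisation of $\hat{C}_N(\rho)$ uniformly in $\rho$ and verifying that the operator-norm and moment estimates of Theorems~\ref{th:Chitour}--\ref{th:Wiesel} and Corollary~\ref{cor:joint} are preserved under composition with the change-of-variables maps of Lemma~\ref{lem:equivalent}; once this is in place, Steps~2 and~3 are routine.
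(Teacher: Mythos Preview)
Your proof is correct and follows essentially the same route as the paper: uniform transfer from $\hat{C}_N,\check{C}_N$ to $\hat{S}_N,\check{S}_N$ via Theorems~\ref{th:Chitour}--\ref{th:Wiesel} and Corollary~\ref{cor:joint}, then reparametrisation to the Ledoit--Wolf model $S_N(u)$ via Lemma~\ref{lem:equivalent}, and finally minimisation of the explicit limiting quadratic (your $g$ equals the paper's $\bar D$). Your Step~3 argument for $\inf_\rho\hat D_N(\rho)\asto D^\star$ directly from uniform convergence is slightly more streamlined than the paper's four-inequality sandwich with an explicit minimiser $\hat\rho_N^\circ$, but the content is the same.
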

\begin{proof}
	The proof is provided in Section~\ref{sec:proof_optimal}.
\end{proof}

The last part of Proposition~\ref{prop:shrink} states that, if consistent estimates $\hat\rho_N$ and $\check\rho_N$ of $\hat\rho^\star$ and $\check{\rho}^\star$ exist, then they have optimal shrinkage performance in the large $N,n$ limit. Such estimates may of course be defined in multiple ways. We present below a simple example based on $\hat{C}_N(\rho)$ and $\check{C}_N(\rho)$.

\begin{proposition}[Optimal Shrinkage Estimate]
	\label{prop:optimal_shrink}
	Under the setting of Proposition~\ref{prop:shrink}, let $\hat\rho_N\in (\max\{0,1-c^{-1}\},1]$ and $\check\rho_N\in (0,1]$ be solutions (not necessarily unique) to 
\begin{align*}
	\frac{\hat{\rho}_N}{\frac1N\tr \hat{C}_N(\hat{\rho}_N)} &= \frac{c_N}{\frac1N\tr \left[\left( \frac1n\sum_{i=1}^n \frac{x_ix_i^*}{\frac1N\Vert x_i\Vert^2} \right)^2\right]-1}  \\
	\frac{\check{\rho}_N \frac1n \sum_{i=1}^n \frac{x_i^*\check{C}_N(\check{\rho}_N)^{-1}x_i}{\Vert x_i\Vert^2}}{1-\check{\rho}_N + \check{\rho}_N \frac1n \sum_{i=1}^n \frac{x_i^*\check{C}_N(\check{\rho}_N)^{-1}x_i}{\Vert x_i\Vert^2}}  &= \frac{c_N}{\frac1N\tr\left[ \left( \frac1n\sum_{i=1}^n \frac{x_ix_i^*}{\frac1N\Vert x_i\Vert^2} \right)^2\right]-1} \nonumber
\end{align*}
defined arbitrarily when no such solutions exist. Then $\hat{\rho}_N\asto \hat\rho^\star$ and $\check{\rho}_N\asto \check\rho^\star$, so that $\hat{D}_N(\hat{\rho}_N)\asto D^\star$ and $\check{D}_N(\check{\rho}_N)\asto D^\star$.
\end{proposition}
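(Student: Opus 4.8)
The plan is as follows. We establish three facts, each holding almost surely: (i) the common right-hand side of the two defining equations converges to the constant $\rho^\star$; (ii) the left-hand side of the hat equation, viewed as a function of $\rho$, converges uniformly on $\hat{\mathcal R}_\varepsilon$ to $\hat g(\rho):=\rho/M_{\hat\mu_\rho,1}$, and that of the Chen equation converges uniformly on $\check{\mathcal R}_\varepsilon$ to $\check g(\rho):=T_\rho/(1-\rho+T_\rho)$; (iii) $\hat g$ and $\check g$ are strictly increasing continuous bijections of $(\max\{0,1-c^{-1}\},1]$ and $(0,1]$ onto $(0,1]$, and, by Proposition~\ref{prop:shrink}, $\hat\rho^\star=\hat g^{-1}(\rho^\star)$ and $\check\rho^\star=\check g^{-1}(\rho^\star)$. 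Granting these, a routine argument on (approximate) zeros of uniformly convergent strictly monotone functions yields $\hat\rho_N\asto\hat\rho^\star$ and $\check\rho_N\asto\check\rho^\star$, after which $\hat D_N(\hat\rho_N)\asto D^\star$ and $\check D_N(\check\rho_N)\asto D^\star$ follow directly from the last assertion of Proposition~\ref{prop:shrink}.

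For (i): since $x_i=\sqrt{\tau_i}z_i$, the scalars $\tau_i$ cancel in $x_ix_i^*/(\tfrac1N\|x_i\|^2)=z_iz_i^*/(\tfrac1N\|z_i\|^2)$; a union bound combined with the exponential concentration of the spherical quadratic form $\tfrac1N\|z_i\|^2=\tfrac1N y_i^*A_N^*A_Ny_i$ around $\tfrac1N\tr C_N=1$ gives $\max_{i\le n}|\tfrac1N\|z_i\|^2-1|\asto 0$, so that $\tfrac1n\sum_i x_ix_i^*/(\tfrac1N\|x_i\|^2)$ is operator-norm close to $\tfrac1n\sum_i z_iz_i^*$. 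The latter is asymptotically equivalent to a sample covariance matrix with population covariance $\EE[z_iz_i^*]=A_NA_N^*=C_N$, whose limiting spectral distribution has first moment $M_{\nu,1}=1$ and second moment $M_{\nu,2}+cM_{\nu,1}^2=M_{\nu,2}+c$ (finite since $\limsup_N\|C_N\|<\infty$). Hence $\tfrac1N\tr[(\tfrac1n\sum_i x_ix_i^*/(\tfrac1N\|x_i\|^2))^2]\asto M_{\nu,2}+c$ and the right-hand side tends to $c/(M_{\nu,2}+c-1)=\rho^\star$; in the generic case $M_{\nu,2}>1$ this lies in $(0,1)$ (the boundary case $\nu={\bm\delta}_1$, where $\rho^\star=1$, being handled by the same argument with one-sided limits).

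For (ii), the hat case is immediate: Corollary~\ref{cor:joint} with $\ell=1$ gives $\sup_{\rho\in\hat{\mathcal R}_\varepsilon}|\tfrac1N\tr\hat{C}_N(\rho)-M_{\hat\mu_\rho,1}|\asto 0$, and since $\tfrac1N\tr\hat{C}_N(\rho)\ge\rho\ge\varepsilon$ on $\hat{\mathcal R}_\varepsilon$ this transfers to $\sup_{\rho\in\hat{\mathcal R}_\varepsilon}|\hat\rho/(\tfrac1N\tr\hat{C}_N(\rho))-\hat g(\rho)|\asto 0$. The Chen case is the crux. Writing the left-hand side as $\rho u_N(\rho)/(1-\rho+\rho u_N(\rho))$ with $u_N(\rho):=\tfrac1n\sum_i (\tfrac1N z_i^*\check{C}_N(\rho)^{-1}z_i)/(\tfrac1N\|z_i\|^2)$ (again the $\tau_i$ cancel), one must show $u_N(\rho)\to T_\rho/\rho$ uniformly on $\check{\mathcal R}_\varepsilon$. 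The steps are: first replace $\check{C}_N(\rho)$ by $\check{S}_N(\rho)$ uniformly via Theorem~\ref{th:Wiesel}, which is legitimate because $\lambda_1(\check{C}_N(\rho))$ and $\lambda_1(\check{S}_N(\rho))$ are bounded away from $0$ uniformly on $\check{\mathcal R}_\varepsilon$ (Corollary~\ref{cor:limit} locates the supports in $[T_\rho(1-\rho+T_\rho)^{-1},\infty)$ with $T_\rho$ continuous and positive on $[\varepsilon,1]$, and Corollary~\ref{cor:joint} controls the extreme eigenvalues), so both inverses are uniformly bounded; then control $\tfrac1N z_i^*\check{S}_N(\rho)^{-1}z_i$ through a leave-one-out (rank-one perturbation) identity together with the concentration of quadratic forms, identifying the deterministic limit $\tfrac1N\tr(C_N\check{S}_N(\rho)^{-1})$ via the deterministic-equivalent formulas of \citep{SIL95,CHO95} underlying Corollary~\ref{cor:limit}; finally, comparing the resulting relation with the definition of $\check{S}_N(\rho)$ — equivalently, taking the normalized trace of the fixed-point equation for $\check{B}_N(\rho)$ — pins down the limit of $\tfrac1N z_i^*\check{C}_N(\rho)^{-1}z_i$ as $T_\rho/\rho$, uniformly in $i$ and $\rho$. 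Uniformity in $\rho$ throughout is obtained by an $\varepsilon$-net argument on the compact set $\check{\mathcal R}_\varepsilon$, using the Lipschitz dependence on $\rho$ of all quantities involved. This uniform-in-$\rho$ control of the random bilinear forms $\tfrac1N z_i^*\check{C}_N(\rho)^{-1}z_i$ and their identification with $T_\rho/\rho$ is the main obstacle, but it reuses, essentially verbatim, the leave-one-out and deterministic-equivalent estimates already developed in the proof of Theorem~\ref{th:Wiesel}.

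Finally, for (iii): by Lemma~\ref{lem:equivalent}, $\hat g$ and $\check g$ are precisely the inverses of the continuously increasing bijections $\rho\mapsto\hat\rho$ and $\rho\mapsto\check\rho$ introduced there — match the $I_N$-coefficients of $\hat{S}_N(\hat\rho)/M_{\hat\mu_{\hat\rho},1}$ and $\check{S}_N(\check\rho)$ against $(1-\rho)\tfrac1n\sum_i z_iz_i^*+\rho I_N$, noting that in each identity the two coefficients sum to $1$ — hence $\hat g,\check g$ are themselves continuously increasing bijections onto $(0,1]$; choosing $\varepsilon<\min(\hat\rho^\star-\max\{0,1-c^{-1}\},\check\rho^\star)$ as prescribed places $\rho^\star$ strictly interior to $\hat g(\hat{\mathcal R}_\varepsilon)$ and $\check g(\check{\mathcal R}_\varepsilon)$. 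On the almost sure event on which (i) and (ii) hold, for all large $N$ the intermediate value theorem produces a solution $\hat\rho_N\in\hat{\mathcal R}_\varepsilon$ (resp.\ $\check\rho_N\in\check{\mathcal R}_\varepsilon$) of the corresponding defining equation — so the arbitrary-definition branch is eventually not triggered — and by strict monotonicity of the limiting function together with $\sup$-closeness, any such solution lies in an arbitrarily small neighbourhood of $\hat g^{-1}(\rho^\star)=\hat\rho^\star$ (resp.\ $\check g^{-1}(\rho^\star)=\check\rho^\star$); thus $\hat\rho_N\asto\hat\rho^\star$ and $\check\rho_N\asto\check\rho^\star$. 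Invoking the last statement of Proposition~\ref{prop:shrink} then gives $\hat D_N(\hat\rho_N)\asto D^\star$ and $\check D_N(\check\rho_N)\asto D^\star$, completing the proof.
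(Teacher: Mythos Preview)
Your proposal is correct and follows essentially the same three-step strategy as the paper: (i) the right-hand side converges to $\rho^\star$ via $\tfrac1N\tr[(\tfrac1n\sum_i z_iz_i^*)^2]\asto M_{\nu,2}+c$ and $\max_i|\tfrac1N\|z_i\|^2-1|\asto 0$; (ii) the left-hand sides converge uniformly to $\hat g(\rho)=\rho/M_{\hat\mu_\rho,1}$ and $\check g(\rho)=T_\rho/(1-\rho+T_\rho)$; (iii) $\hat g,\check g$ are the strictly increasing bijections of Lemma~\ref{lem:equivalent}, so any solution of the defining equations is forced toward $\hat\rho^\star,\check\rho^\star$.

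The one place where your argument genuinely differs from the paper is the Chen quadratic-form step in (ii). The paper does \emph{not} pass through $\check S_N(\rho)$; instead it invokes two explicit identities extracted from the proof of Theorem~\ref{th:Wiesel}, namely
\[
\rho\,\tfrac1N z_i^*\check C_N(\rho)^{-1}z_i \;=\; \rho\,\check d_i(\rho)\,\bigl(\tfrac1N\tr\check B_N(\rho)-c_N(1-\rho)\bigr)
\;=\; \rho\,\check d_i(\rho)\,F_N\!\Bigl(\bigl[\tfrac1n\textstyle\sum_j \tfrac{\|z_j\|^2/N}{\check d_j(\rho)}\bigr]^{-1}\Bigr),
\]
and then applies the already-established uniform convergence $\sup_{\rho}\max_i|\check d_i(\rho)-\check\gamma(\rho)|\asto 0$ together with continuity of $x\mapsto xF_N(x)$ to get $\rho\,\tfrac1N z_i^*\check C_N(\rho)^{-1}z_i\to\rho\check\gamma(\rho)F(\check\gamma(\rho);\rho)=T_\rho$ directly. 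Your route --- replace $\check C_N$ by $\check S_N$ in spectral norm, then do a fresh leave-one-out on $\check S_N$ and identify $\tfrac1N\tr(C_N\check S_N^{-1})$ via deterministic equivalents --- is valid but more circuitous: you would have to redo a computation that the proof of Theorem~\ref{th:Wiesel} already encodes in the single line $\check d_i\to\check\gamma$. The paper's shortcut buys you uniformity in both $i$ and $\rho$ for free, without an $\varepsilon$-net or a separate Lipschitz argument.
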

\begin{proof}
	The proof is deferred to Section~\ref{sec:proof_optimal_shrink}.
\end{proof}

Figure~\ref{fig:shrinkage} illustrates the performance in terms of the metric $\check{D}_N$ of the empirical shrinkage coefficient $\check{\rho}_N$ introduced in Proposition~\ref{prop:optimal_shrink} versus the optimal value $\inf_{\rho\in (0,1]}\{\check{D}_N(\rho)\}$, averaged over $10\,000$ Monte Carlo simulations. We also present in this graph the almost sure limiting value $D^\star$ of both $\check{D}_N(\check{\rho}_N)$ and $\inf_{\rho\in \check{\mathcal R}_\varepsilon}\{\check{D}_N(\rho)\}$ for some sufficiently small $\varepsilon$, as well as $\check{D}_N(\check{\rho}_O)$ of $\check{\rho}_O$ defined in \citep[Equation~(12)]{CHE11} as the minimizing solution of $\EE[\frac1N\tr (\check{C}_{O}(\rho)-C_N)^2]$ with $\check{C}_{O}(\rho)$ the so-called ``clairvoyant estimator''
\begin{align*}
	\check{C}_{O}(\rho) &= (1-\rho)\frac1n\sum_{i=1}^n \frac{x_ix_i^*}{\frac1Nx_i^*C_N^{-1}x_i} + \rho I_N.
\end{align*}
We consider in this graph $N=32$ constant, $n\in\{2^k,k=1,\ldots,7\}$, and $C_N=[C_N]_{i,j=1}^N$ with $[C_N]_{ij}=r^{|i-j|}$, $r=0.7$, which is the same setting as considered in \citep[Section~4]{CHE11}. 

It appears in Figure~\ref{fig:shrinkage} that a significant improvement is brought by $\check{\rho}_N$ over $\check{\rho}_O$, especially for small $n$, which translates the poor quality of $\check{C}_{O}(\rho)$ as an approximation of $\check{C}_N(\rho)$ for large values of $c_N$ (obviously linked to $\frac1Nx_i^*C_N^{-1}x_i$ being then a bad approximation for $\frac1Nx_i^*\check{C}_N(\rho)^{-1}x_i$). Another important remark is that, even for so small values of $N,n$, $\inf_{\rho\in (0,1]}\check{D}_N(\rho)$ is extremely close to the limiting optimal, suggesting here that the limiting results of Proposition~\ref{prop:shrink} are already met for small practical values. The approximation $\check{\rho}_N$ of $\check\rho^\star$, translated here through $\check{D}_N(\check{\rho}_N)$, also demonstrates good practical performance at small values of $N,n$.

We additionally mention that we produced similar curves for $\hat{C}_N(\rho)$ in place of $\check{C}_N(\rho)$ which happened to show virtually the same performance as the equivalents curves for $\check{C}_N(\rho)$. This is of course expected (with exact match) for $\inf_{\rho\in (0,1]}\hat{D}_N(\rho)$ which, up to the region $[0,\varepsilon)$, matches $\inf_{\rho\in (0,1]}\check{D}_N(\rho)$ for large enough $N,n$, and similarly for $\hat{D}_N(\hat{\rho}_N)$ since $\hat{\rho}_N$ was designed symmetrically to $\check{\rho}_N$.

Associated to Figure~\ref{fig:shrinkage} is Figure~\ref{fig:shrinkage_rho} which provides the shrinkage parameter values, optimal and approximated, for both the Abramovich--Pascal and Chen estimates, along with the clairvoyant $\check{\rho}_O$ of \citep{CHE11}. Recall that the $\hat{(\cdot)}$ values must only be compared to one another, and similarly for the $\check{(\cdot)}$ values (so in particular $\check{\rho}_O$ only compares against the $\check{(\cdot)}$ values). It appears here that $\check{\rho}_O$ is a rather poor estimate for $\argmin_{\rho\in (0,1]}\check{D}_N(\rho)$ for a large range of values of $n$. It tends in particular to systematically overestimate the weight to be put on the sample covariance matrix.

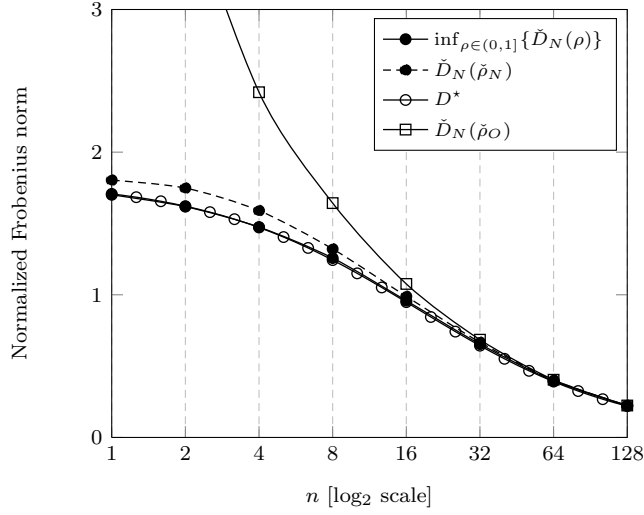
\begin{figure}[h!]
  \centering
  \begin{tikzpicture}[font=\footnotesize]
    \renewcommand{\axisdefaulttryminticks}{4} 
    \tikzstyle{every major grid}+=[style=densely dashed]       
    \tikzstyle{every axis y label}+=[yshift=-10pt] 
    \tikzstyle{every axis x label}+=[yshift=5pt]
    \tikzstyle{every axis legend}+=[cells={anchor=west},fill=white,
        at={(0.98,0.98)}, anchor=north east, font=\scriptsize ]
    \begin{semilogxaxis}[
      %ybar,
      xmin=1,
      ymin=0,
      xmax=128,
      ymax=3,
      xtick={1,2,4,8,16,32,64,128},
      xticklabels={$1$,$2$,$4$,$8$,$16$,$32$,$64$,$128$},
      bar width=1.3pt,
      grid=major,
      ymajorgrids=false,
      scaled ticks=true,
      %scale ticks above={4},
      xlabel={$n$ [log$_2$ scale]},
      ylabel={Normalized Frobenius norm},
      log basis x={2}
      ]
      \addplot[black,mark=*,smooth,line width=0.5pt] plot coordinates{
	      %(1,1.710413)(2,1.622925)(4,1.476002)(8,1.257387)(16,0.958598)(32,0.654461)(64,0.396244)(128,0.222277)
	      (1,1.700861)(2,1.618079)(4,1.474713)(8,1.257387)(16,0.958598)(32,0.654461)(64,0.396244)(128,0.222277)
      };
      \addplot[black,densely dashed,smooth,mark=*,smooth,line width=0.5pt] plot coordinates{
	      %(1,1.761233)(2,1.732165)(4,1.589729)(8,1.321316)(16,0.988920)(32,0.667180)(64,0.401279)(128,0.224243)
	      (1,1.803825)(2,1.748016)(4,1.589927)(8,1.321316)(16,0.988920)(32,0.667180)(64,0.401279)(128,0.224243)
      };
      \addplot[domain=1:256,smooth,mark=o,line width=0.5pt] {32/x*(1.8038)/(32/x+1.8038)};
      \addplot[black,mark=square,smooth,line width=0.5pt] plot coordinates{
	      %(1,6.070254)(2,3.892826)(4,2.416070)(8,1.642827)(16,1.076083)(32,0.685260)(64,0.404383)(128,0.223857)
	      (1,6.061630)(2,3.885264)(4,2.420059)(8,1.642827)(16,1.076083)(32,0.685260)(64,0.404383)(128,0.223857)
      };
      \legend{{ $\inf_{\rho\in (0,1]} \{\check{D}_N(\rho)\}$ }, {$\check{D}_N(\check\rho_N)$}, {$D^\star$}, {$\check{D}_N(\check\rho_O)$} }
    \end{semilogxaxis}
  \end{tikzpicture}
  \caption{Performance of optimal shrinkage averaged over $10\,000$ Monte Carlo simulations, for $N=32$, various values of $n$, $[C_N]_{ij}=r^{|i-j|}$ with $r=0.7$; $\check\rho_N$ is given in Proposition~\ref{prop:optimal_shrink}; $\check\rho_O$ is the clairvoyant estimator proposed in \citep[Equation~(12)]{CHE11}; $D^\star$ taken with $c=N/n$.}
  \label{fig:shrinkage}
\end{figure}

\begin{figure}[h!]
  \centering
  \begin{tikzpicture}[font=\footnotesize]
    \renewcommand{\axisdefaulttryminticks}{4} 
    \tikzstyle{every major grid}+=[style=densely dashed]       
    \tikzstyle{every axis y label}+=[yshift=-10pt] 
    \tikzstyle{every axis x label}+=[yshift=5pt]
    \tikzstyle{every axis legend}+=[cells={anchor=west},fill=white,
        at={(0.02,0.02)}, anchor=south west, font=\scriptsize ]
    \begin{semilogxaxis}[
      %ybar,
      xmin=1,
      ymin=0,
      xmax=128,
      ymax=1,
      xtick={1,2,4,8,16,32,64,128},
      xticklabels={$1$,$2$,$4$,$8$,$16$,$32$,$64$,$128$},
      bar width=1.3pt,
      grid=major,
      ymajorgrids=false,
      mark options={solid},
      scaled ticks=true,
      %scale ticks above={4},
      legend columns=3,
      xlabel={$n$ [log$_2$ scale]},
      ylabel={Shrinkage parameter},
      log basis x={2}
      ]
      \addplot[black,mark=triangle*,line width=0.5pt] plot coordinates{
(1,0.980687)(2,0.960952)(4,0.924197)(8,0.859237)(16,0.756050)(32,0.610550)(64,0.439700)(128,0.281400)
      };
      \addplot[black,densely dashed,mark=triangle*,smooth,line width=0.5pt] plot coordinates{
(1,1.000000)(2,0.986773)(4,0.951149)(8,0.881600)(16,0.779975)(32,0.639900)(64,0.473750)(128,0.310800)
      };
      \addplot[mark=triangle,line width=0.5pt] plot coordinates {
(1,0.988750)(2,0.967500)(4,0.935000)(8,0.870000)(16,0.770000)(32,0.620000)(64,0.460000)(128,0.300000)
      };
      \addplot[black,smooth,mark=*,line width=0.5pt] plot coordinates{
	      (1,0.980789)(2,0.958281)(4,0.914106)(8,0.829463)(16,0.682225)(32,0.475950)(64,0.274000)(128,0.139600)
      };
      \addplot[black,smooth,densely dashed,mark=*,smooth,line width=0.5pt] plot coordinates{
(1,1.000000)(2,0.985710)(4,0.946787)(8,0.862569)(16,0.719125)(32,0.514825)(64,0.307700)(128,0.158800)
      };
      \addplot[mark=o,smooth,line width=0.5pt] plot coordinates {
	      (1,0.980000)(2,0.960000)(4,0.920000)(8,0.840000)(16,0.690000)(32,0.490000)(64,0.290000)(128,0.150000)
      };
      \addplot[black,mark=square,smooth,line width=0.5pt] plot coordinates{
(1,0.947556)(2,0.900339)(4,0.818742)(8,0.693111)(16,0.530352)(32,0.360870)(64,0.220159)(128,0.123696)
      };
%      \addplot[domain=1:256,black,smooth,line width=0.5pt] {32/x/(32/x+1.8038)};
      \legend{  {$\hat\rho^\circ$}, {$\hat\rho_N$}, {$\hat\rho^\star$}, {$\check{\rho}^\circ$}, {$\check\rho_N$}, {$\check\rho^\star$}, {$\check\rho_O$}} %, {$\rho^\star$} }
    \end{semilogxaxis}
  \end{tikzpicture}
  \caption{Shrinkage parameter $\rho$ averaged over $10\,000$ Monte Carlo simulations, for $N=32$, various values of $n$, $[C_N]_{ij}=r^{|i-j|}$ with $r=0.7$; $\hat\rho_N$ and $\check\rho_N$ given in Proposition~\ref{prop:optimal_shrink}; $\check\rho_O$ is the clairvoyant estimator proposed in \citep[Equation~(12)]{CHE11}; $\rho^\star$, $\hat{\rho}^\star$, and $\check{\rho}^\star$ taken with $c=N/n$; $\hat\rho^\circ=\argmin_{\{\rho\in(\max\{0,1-c_N^{-1}\},1]\}}\{\hat D_N(\rho)\}$ and $\check\rho^\circ=\argmin_{\{\rho\in(0,1]\}}\{\check D_N(\rho)\}$.}
  \label{fig:shrinkage_rho}
\end{figure}
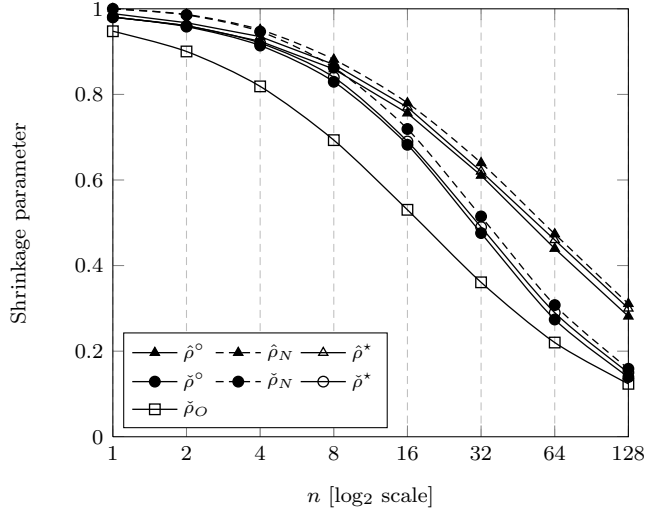

\section{Concluding remarks}
\label{sec:conclusion}
The article shows that, in the large dimensional random matrix regime, the Abramovich--Pascal and Chen estimators for elliptical samples $x_1,\ldots,x_n$ are (up to a variable change) asymptotically equivalent, so that both can be used interchangeably. They are also equivalent to the classical Ledoit--Wolf estimator for the samples $z_1,\ldots,z_n$ or, as can be easily verified, for the samples $\sqrt{N}x_1/\Vert x_1\Vert,\ldots,\sqrt{N}x_n/\Vert x_n\Vert$. This means that for elliptical samples, at least as far as first order convergence is concerned, the Abramovich--Pascal and Chen estimators perform similar to a normalized version of Ledoit--Wolf. 

Recalling that robust estimation theory aims in particular at handling sample sets corrupted by outliers, the performance of the Abramovich--Pascal and Chen estimators given in this paper (not considering outliers) can be seen as a base reference for the ``clean data'' scenario which paves the way for future work in more advanced scenarios. In the presence of outliers, it is expected that the Abramovich--Pascal and Chen estimates exhibit robustness properties that the normalized Ledoit--Wolf scheme does not possess by appropriately weighting good versus outlying data. The study of this scenario is currently under investigation. Also, the extension of this work to second order analysis, e.g., to central limit theorems on linear statistics of the robust estimators, is a direction of future work that will allow to handle more precisely the gain of robust versus non-robust schemes in the not-too-large dimensional regime.

In terms of applications, Proposition~\ref{prop:optimal_shrink} allows for the design of covariance matrix estimators, with minimal Frobenius distance to the population covariance matrix for impulsive i.i.d. samples but in the absence of outliers, and having robustness properties in the presence of outliers. This is fundamental to those scientific fields where the covariance matrix is the object of central interest. More generally though, Theorems~\ref{th:Chitour} and \ref{th:Wiesel} can be used to design optimal covariance matrix estimators under other metrics than the Frobenius norm. This is in particular the case in applications to finance where a possible target consists in the minimization of the risk induced by portfolios built upon such covariance matrix estimates, see e.g., \citep{LED03,RUB12,JIA13}. The possibility to let the number of samples be less than the population size (as opposed to robust estimators of the Maronna-type \citep{MAR76}) is also of interest to applications where optimal shrinkage is not a target but where robustness is fundamental, such as array processing with impulsive noise (e.g., multi-antenna radar) where direction-of-arrival estimates are sought for (see e.g., \citep{MES08c,COU13}). These considerations are also left to future work.

\section{Proofs}
\label{sec:proofs}

This section successively introduces the proofs of Theorem~\ref{th:Chitour}, Theorem~\ref{th:Wiesel}, Corollary~\ref{cor:joint}, Lemma~\ref{lem:equivalent}, Proposition~\ref{prop:shrink}, and Proposition~\ref{prop:optimal_shrink}. The methodology of proof of Theorem~\ref{th:Chitour} closely follows that of \citep{COU13b}. The proof of Theorem~\ref{th:Wiesel} also relies on the same ideas but is more technical due to the imposed normalization of $\check{C}_N(\rho)$ to be of trace $N$. The proofs of the corollary, lemma, and propositions then rely mostly on the important joint convergence over $\rho$ proved in Theorem~\ref{th:Chitour} and Theorem~\ref{th:Wiesel}, and on standard manipulations of random matrix theory and fixed-point equation analysis.

\subsection{Proof of Theorem~\ref{th:Chitour}}
\label{sec:proof_th_Chitour}

The proof of existence and uniqueness of $\hat{C}_N(\rho)$ is given in \citep{PAS13}. 

The existence and uniqueness of $\hat{\gamma}(\rho)$ is quite immediate as the right-hand side integral in the definition of $\hat{\gamma}(\rho)$ is a decreasing function of $\hat{\gamma}$ (since $\rho>0$) with limits $1/(1-\rho)>1$ as $\hat{\gamma}\to 0$ (since $\nu\neq {\bm\delta}_0$ almost everywhere) and zero as $\hat{\gamma}\to\infty$. We now prove the continuity of $\hat\gamma$ on $(0,1]$. Let $\rho_0,\rho\in(0,1]$ and $\hat\gamma_0=\hat\gamma(\rho_0)$, $\hat\gamma=\hat\gamma(\rho)$. Then
\begin{align*}
	\int \frac{t}{\hat\gamma\rho+(1-\rho)t}\nu(dt) - \int\frac{t}{\hat\gamma_0\rho_0+(1-\rho_0)t}\nu(dt) &=0.
\end{align*}
Setting the difference into a common integral and isolating the term $\hat\gamma_0-\hat\gamma$, this becomes, after some calculus,
\begin{align*}
	(\hat\gamma_0-\hat\gamma)\rho_0 &= -\hat\gamma(\rho_0-\rho) + (\rho-\rho_0) \frac{\int \frac{t^2}{(\hat\gamma\rho+(1-\rho)t)(\hat\gamma_0\rho_0+(1-\rho_0)t)}\nu(dt)}{\int \frac{t}{(\hat\gamma\rho+(1-\rho)t)(\hat\gamma_0\rho_0+(1-\rho_0)t)}\nu(dt)}.
\end{align*}
Since the support of $\nu$ is bounded by $\limsup_N\Vert C_N\Vert<\infty$ and in particular $\hat\gamma(\rho)\leq\rho^{-1} \limsup_N\Vert C_N\Vert$ by definition of $\hat\gamma$, the ratio of integrals above is uniformly bounded on $\rho$ in a certain small neighborhood of $\rho_0>0$. Taking the limit $\rho\to\rho_0$ then brings $\hat\gamma_0-\hat\gamma\to 0$, which proves the continuity.

From now on, for readability, we discard all unnecessary indices $\rho$ when no confusion is possible.

Note first that $x_i$ can be equivalently replaced by $z_i$ from the definition of $\hat{C}_N(\rho)$ which is independent of $\tau_1,\ldots,\tau_n$.
Consider $\rho\in\hat{\mathcal R}_\varepsilon$ fixed and assume $\hat{C}_N$ exists for all $N$ on the realization $\{z_1,\ldots,z_n\}_{n=1}^\infty$ (a probability one event). We start by rewriting $\hat{C}_N$ in a more convenient form. Denoting $\hat{C}_{(i)}\triangleq \hat{C}_N-(1-\rho)\frac1n\frac{z_iz_i^*}{\frac1Nz_i^*\hat{C}_N^{-1}z_i}$ and using $(A+tvv^*)^{-1}v=A^{-1}v/(1+tv^*A^{-1}v)$ for positive definite Hermitian $A$, vector $v$, and scalar $t>0$, we have
\begin{align*}
	\frac1Nz_i^*\hat{C}_N^{-1}z_i &= \frac{\frac1Nz_i^*\hat{C}_{(i)}^{-1}z_i}{1+(1-\rho)c\frac{\frac1Nz_i^*\hat{C}_{(i)}^{-1}z_i}{\frac1Nz_i^*\hat{C}_N^{-1}z_i}}
\end{align*}
so that
\begin{align*}
	\frac1Nz_i^*\hat{C}_N^{-1}z_i &= (1-(1-\rho)c_N) \frac1Nz_i^* \hat{C}_{(i)}^{-1}z_i
\end{align*}
and we can rewrite $\hat{C}_N$ as
\begin{align*}
	\hat{C}_N &= \frac{1-\rho}{1-(1-\rho)c_N} \frac1n\sum_{i=1}^n \frac{z_iz_i^*}{\frac1Nz_i^*\hat{C}_{(i)}^{-1}z_i} + \rho I_N.
\end{align*}
The interest of this rewriting is detailed in \citep{COU13b} and mostly lies in the intuition that $\frac1Nz_i^*\hat{C}_{(i)}^{-1}z_i$ should be close to $\frac1N\tr (\hat{C}_N^{-1})$ for all $i$, while $\frac1Nz_i^*\hat{C}_N^{-1}z_i$ is a priori more involved.

To proceed with the proof, for $i\in\{1,\ldots,n\}$, denote $\hat{d}_i(\rho)\triangleq \frac1Nz_i^*\hat{C}_{(i)}^{-1}z_i$ and, up to relabeling, assume $\hat{d}_1(\rho)\leq \ldots \leq \hat{d}_n(\rho)$. Then, using $A\succeq B\Rightarrow B^{-1}\succeq A^{-1}$ for positive Hermitian matrices $A,B$,
\begin{align*}
	\hat{d}_n(\rho) &= \frac1Nz_n^* \left( \frac{1-\rho}{1-(1-\rho)c_N} \frac1n\sum_{i=1}^{n-1} \frac{z_iz_i^*}{\hat{d}_i(\rho)} + \rho I_N \right)^{-1} z_n \\
	&\leq \frac1Nz_n^* \left( \frac{1-\rho}{1-(1-\rho)c_N} \frac1n\sum_{i=1}^{n-1} \frac{z_iz_i^*}{\hat{d}_n(\rho)} + \rho I_N \right)^{-1} z_n.
\end{align*}
Since $z_n\neq 0$, this implies
\begin{align}
	\label{eq:1leq}
	1 &\leq \frac1Nz_n^* \left( \frac{1-\rho}{1-(1-\rho)c_N} \frac1n\sum_{i=1}^{n-1} z_iz_i^* + \hat{d}_n(\rho)\rho I_N \right)^{-1} z_n.
\end{align}
Similarly, with the same derivations, but with opposite inequalities
\begin{align*}
	1 &\geq \frac1Nz_1^* \left( \frac{1-\rho}{1-(1-\rho)c_N} \frac1n\sum_{i=2}^n z_iz_i^* + \hat{d}_1(\rho)\rho I_N \right)^{-1} z_1.
\end{align*}

Our objective is to show that $\sup_{\rho \in \hat{\mathcal R}_\varepsilon}\max_{1\leq i\leq n}|\hat{d}_i(\rho)-{\hat{\gamma}}(\rho)|\asto 0$ where ${\hat{\gamma}}(\rho)$ is given in the statement of the theorem. This is proved via a contradiction argument.

For this, assume that there exists a sequence $\{\rho_n\}_{n=1}^\infty$ over which $\hat{d}_n(\rho_n)>{\hat{\gamma}}(\rho_n)+\ell$ infinitely often, for some $\ell>0$ fixed. Since $\{\rho_n\}_{n=1}^\infty$ is bounded, it has a limit point $\rho_0\in \hat{\mathcal R}_\varepsilon$. Let us restrict ourselves to such a subsequence on which $\rho_n\to\rho_0$ and $\hat{d}_n(\rho_n)>{\hat{\gamma}}(\rho_n)+\ell$. On this sequence, from \eqref{eq:1leq}
\begin{align}
	\label{eq:main_ineq}
	1&\leq \frac1Nz_n^* \left( \frac{1-\rho_n}{1-(1-\rho_n)c_N} \frac1n\sum_{i=1}^{n-1} z_iz_i^* + ({\hat{\gamma}}(\rho_n)+\ell)\rho_n I_N \right)^{-1} z_n \triangleq \hat{e}_n.
\end{align}

Assume first $\rho_0\neq 1$. From standard random matrix results, we have
\begin{align}
	\label{eq:conv_en}
	\hat{e}_n &= \frac{1-(1-\rho_n)c_N}{1-\rho_n} \frac1Nz_n^* \left( \frac1n\sum_{i=1}^{n-1} z_iz_i^* + ({\hat{\gamma}}(\rho_n)+\ell)\rho_n \frac{1-(1-\rho_n)c_N}{1-\rho_n} I_N \right)^{-1} z_n \nonumber \\
	&\asto \frac{1-(1-\rho_0)c}{1-\rho_0} \delta\left( - ({\hat{\gamma}}(\rho_0)+\ell)\rho_0 \frac{1-(1-\rho_0)c}{1-\rho_0} \right) \triangleq e^+
\end{align}
where, for $x>0$, $\delta(x)$ is the unique positive solution to the equation
\begin{align*}
	\delta(x) &= \int \frac{t}{-x + \frac{t}{1+c\delta(x)}} \nu(dt).
\end{align*}
The convergence \eqref{eq:conv_en} follows from several classical ingredients. For this, we first use the fact that, for each $p\geq 2$, $w>0$, and $j\in\{1,\ldots,n\}$, (see e.g., \citep{SIL95,COU13} for similar arguments)
\begin{align*}
	\EE\left[ \left| \frac1Nz_j^* \left( \frac1n\sum_{i\neq j} z_iz_i^* + w I_N \right)^{-1}z_j - \delta (-w) \right|^p  \right] &= \mathcal O\left( N^{-p/2} \right)
\end{align*}
which, taking $p\geq 4$ along with Boole's inequality, Markov inequality, and Borel--Cantelli lemma, ensures that
\begin{align*}
	\max_{1\leq j\leq n} \left| \frac1Nz_j^* \left( \frac1n\sum_{i\neq j} z_iz_i^* + w I_N \right)^{-1}z_j - \delta (-w) \right| &\asto 0.
\end{align*}
Using successively $A^{-1}-B^{-1}=A^{-1}(B-A)B^{-1}$ for invertible $A,B$ matrices and the fact that $\Vert ( \frac1n\sum_{i\neq j} z_iz_i^* + w I_N)^{-1}\Vert < w^{-1}$ and $\limsup_n \max_{1\leq i\leq n}\frac1N\Vert z_i\Vert^2=M_{\nu,1}=1<\infty$ a.s., we then have, for any positive sequence $w_n\to w>0$, 
\begin{align*}
	&\max_{1\leq j\leq n} \left| \frac1Nz_j^* \left( \frac1n\sum_{i\neq j} z_iz_i^* + w_n I_N \right)^{-1}z_j - \frac1Nz_j^* \left( \frac1n\sum_{i\neq j} z_iz_i^* + w I_N \right)^{-1}z_j \right| \nonumber \\
	&= |w_n-w| \max_{1\leq j\leq n} \left| \frac1Nz_j^* \left( \frac1n\sum_{i\neq j} z_iz_i^* + w_n I_N \right)^{-1}\left( \frac1n\sum_{i\neq j} z_iz_i^* + w I_N \right)^{-1}z_j\right| \\
	&\leq |w_n-w|  \frac1{w_nw} \max_{1\leq j\leq n} \frac1N\Vert z_j\Vert^2 \\
	&\asto 0
\end{align*}
from which the convergence \eqref{eq:conv_en} unfolds.

Developing the expression of $e^+$ then leads to $e^+$ being the unique positive solution of the equation
\begin{align*}
	e^+ &= \int \frac{t}{({\hat{\gamma}}(\rho_0)+\ell)\rho_0 + \frac{t}{\frac{1-(1-\rho_0)c}{1-\rho_0} + ce^+} } \nu(dt)
\end{align*}
which we write equivalently
\begin{align}
	\label{eq:e+}
	1 &= \int \frac{t}{({\hat{\gamma}}(\rho_0)+\ell)\rho_0 e^+ + \frac{t e^+}{\frac{1-(1-\rho_0)c}{1-\rho_0} + ce^+} } \nu(dt).
\end{align}
Note that the right-hand side term is a decreasing function $f$ of $e^+$. From the definition of ${\hat{\gamma}}(\rho_0)$, we can in parallel write
\begin{align}
	\label{eq:1}
	1 &= \int \frac{t}{{\hat{\gamma}}(\rho_0) \rho_0 \times 1 + \frac{t \times 1}{\frac{1-(1-\rho_0)c}{1-\rho_0} + c\times 1} } \nu(dt)
\end{align}
where we purposely made the terms $1$ explicit. Now, since both integrals above equal $1$, since $\ell>0$, and since $f$ is decreasing, we must have $e^+<1$. But this is in contradiction with $\hat{e}_n\geq 1$ and the convergence \eqref{eq:conv_en}. 

If instead, $\rho_0=1$, then from the definition of $\hat{e}_n$ in \eqref{eq:main_ineq}, and since $\frac1N\Vert z_n\Vert^2\asto M_{\nu,1}=1$ (from $\lim_n \max_{1\leq i\leq n} | \frac1N\Vert z_i\Vert^2 - M_{\nu,1} | \asto 0$), $\limsup_n \Vert \frac1n\sum_{i=1}^ n z_iz_i^*\Vert<\infty$ a.s. (from Assumption~\ref{ass:x}--\ref{item:C} and \citep{SIL98}), and $\hat{\gamma}(1)=M_{\nu,1}=1$, we have
\begin{align*}
	\hat{e}_n &\asto \frac{M_{\nu,1}}{M_{\nu,1}+\ell} = \frac{1}{1+\ell} < 1
\end{align*}
again contradicting $\hat{e}_n\geq 1$.

Hence, for all large $n$, there is no sequence of $\rho_n$ for which $\hat{d}_n(\rho_n) > {\hat{\gamma}}(\rho_n)+\ell$ infinitely often and therefore $\hat{d}_n(\rho)\leq {\hat{\gamma}}(\rho)+\ell$ for all large $n$ a.s., uniformly on $\rho\in\hat{\mathcal R}_\varepsilon$.

The same reasoning holds for $\hat{d}_1(\rho)$ which can be proved greater than ${\hat{\gamma}}(\rho)-\ell$ for all large $n$ uniformly on $\rho\in\hat{\mathcal R}_\varepsilon$. Consequently, since $\ell>0$ is arbitrary, from the ordering of the $\hat{d}_i(\rho)$, we have proved that $\sup_{\rho\in\hat{\mathcal R}_\varepsilon}\max_{1\leq i\leq n}|\hat{d}_i(\rho)-{\hat{\gamma}}(\rho)|\asto 0$.

From there, we then find that
\begin{align*}
	\sup_{\rho \in \hat{\mathcal R}_\varepsilon}\left\Vert \hat{S}_N(\rho) - \hat{C}_N(\rho) \right\Vert &\leq \left\Vert \frac1n\sum_{i=1}^n z_iz_i^* \right\Vert \sup_{\rho \in \hat{\mathcal R}_\varepsilon} \max_{1\leq i\leq n}  \frac{1-\rho}{1-(1-\rho)c_N} \left| \frac{\hat{d}_i(\rho)-{\hat{\gamma}(\rho)}}{{\hat{\gamma}(\rho)} \hat{d}_i(\rho)} \right| \\
	&\asto 0
\end{align*}
where we used the fact that $\limsup_n\left\Vert \frac1n\sum_{i=1}^n z_iz_i^* \right\Vert < \infty$ a.s. from Assumption~\ref{ass:x}--\ref{item:C} and \citep{SIL98}, and the fact that $0<\varepsilon<c^{-1}$.

\subsection{Proof of Theorem~\ref{th:Wiesel}}
\label{sec:proof_th_Wiesel}

The proof of existence and uniqueness is given in \citep{CHE11}. The proof of Theorem~\ref{th:Wiesel} unfolds similarly as the proof of Theorem~\ref{th:Chitour} but it slightly more involved due to the difficulty brought by the normalization of $\check{C}_N(\rho)$ by its trace. For this reason, we first introduce some preliminary results needed in the main core of the proof. Note also that, similar to the proof of Theorem~\ref{th:Chitour}, we may immediately consider $z_i$ in place of $x_i$ in the expression of $\check{C}_N(\rho)$ from the independence of $\check{C}_N(\rho)$ with respect to $\tau_1,\ldots,\tau_n$.

From now on, for the sake of readability, we discard the unnecessary indices $\rho$.

\subsubsection{Some preliminaries}
%Note first that we can always assume $\frac1N\tr C_N=1$ for each $N$ up to a renormalization of the $z_i$, i.e., by replacing $z_i$ by $z_i(\frac1N\tr C_N)^{-\frac12}$, which does not alter the expression of $\check{C}_N$. We take this assumption from now on. In particular, we will assume $M_{\nu,1}=1$ and will prove Theorem~\ref{th:Wiesel} first in this setting.

We start by some considerations on ${\check{\gamma}}(\rho)$ and $F_N(x)$ defined as the unique positive solution to the equation in $F_N$
\begin{align}
	\label{eq:FN}
	F_N=(1-\rho)\frac1x \frac1{F_N}+\rho-c_N(1-\rho).
\end{align}
Note first that, for $x>0$, \eqref{eq:FN} can be written as a second order polynomial whose solutions have opposite signs, the positive one being explicitly given by
	\begin{align*}
		F_N(x) &= \frac12\left(\rho - c_N(1-\rho) \right) + \sqrt{\frac14\left(\rho - c_N(1-\rho) \right)^2+(1-\rho)\frac1x}.
	\end{align*}
	The function $F_N(x)$ is decreasing with $\lim_{x\to 0}F_N(x)=\infty$ and $\lim_{x\to\infty}F_N(x)=\max \{\rho-c_N(1-\rho),0\}$. As $N\to\infty$, $c_N\to c$, and $F_N(x)\to F(x)=F(x;\rho)$ defined in the statement of the theorem which therefore satisfies $F(x)=(1-\rho)\frac1x \frac1{F(x)}+\rho-c(1-\rho)$ and is decreasing with $\lim_{x\to 0}F(x)=\infty$ and $\lim_{x\to\infty}F(x)=\max \{\rho-c(1-\rho),0\}$. This implies in particular that the function
\begin{align}
	\label{eq:fun_G}
	G: x \mapsto \int \frac{t}{x\rho + \frac{1-\rho}{(1-\rho)c+F(x)}t}\nu(dt)
\end{align}
is decreasing with $\lim_{x\to 0}G(x)=\infty$ and $\lim_{x\to\infty}G(x)=0$. Hence the existence and uniqueness of ${\check{\gamma}}(\rho)$ as defined in the theorem.

Now consider the function $H_N:x\mapsto xF_N(x)$ for $x>0$ and $\rho<1$. Then, for $x>0$,
\begin{align*}
	H_N'(x) &= \frac12\frac{A(x)+B(x)}{\sqrt{\left( \frac{\rho-(1-\rho)c_N}2 \right)^2x^2+(1-\rho)x}}
\end{align*}
where
\begin{align*}
	A(x)&= 2\left( \frac{\rho-(1-\rho)c_N}2 \right)\sqrt{\left( \frac{\rho-(1-\rho)c_N}2 \right)^2x^2+(1-\rho)x} \\
	B(x)&= 1-\rho + 2 \left( \frac{\rho-(1-\rho)c_N}2 \right)^2x.
\end{align*}
Although $A(x)$ may be negative, it is easily verified that $B(x)^2=A(x)^2+(1-\rho)^2$ for all $x\geq 0$. Therefore, if $\rho<1$, for each $w_0>0$, there exists $\varepsilon>0$ such that
\begin{align}
	\label{eq:uniform_HN'}
	\liminf_N \sup_{w_0-\varepsilon<x<w_0+\varepsilon} H_N'(x) > 0
\end{align}
a relation which will be useful in the core of the proof of Theorem~\ref{th:Wiesel}.

To prove continuity of ${\check{\gamma}}$, the same arguments as in the proof of Theorem~\ref{th:Chitour} hold. That is, take $\rho_0,\rho\in(0,1]$ and denote $\check\gamma_0=\check\gamma(\rho_0)$ and $\check\gamma=\check\gamma(\rho)$. Then, by definition of $\check\gamma(\rho)$, using $F(x)=(1-\rho)\frac1x \frac1{F(x)}+\rho-c(1-\rho)$,
\begin{align*}
	\int \frac{t}{\check\gamma_0\rho_0+\frac{(1-\rho_0)\check\gamma_0F(\check\gamma_0)}{1-\rho_0+\rho_0\check\gamma_0 F(\check{\gamma}_0)}t}\nu(dt) - \int \frac{t}{\check\gamma\rho+\frac{(1-\rho)\check\gamma F(\check\gamma)}{1-\rho+\rho \check\gamma F(\check{\gamma})}t}\nu(dt) &= 0.
\end{align*}
Setting these to a common denominator gives, after some calculus,
\begin{align}
	\label{eq:continuity_check}
&	\left[ (\check\gamma_0-\check\gamma)\rho_0+\check\gamma(\rho_0-\rho)\right]\int \frac{t}{D(t)}\nu(dt) \nonumber\\
&= \frac{(1-\rho)(1-\rho_0)(\check\gamma F(\check\gamma)-\check\gamma_0 F(\check\gamma_0))+(\rho_0-\rho)\check\gamma\check\gamma_0F(\check\gamma)F(\check\gamma_0)}{(1-\rho+\rho\check\gamma F(\check\gamma))(1-\rho_0+\rho_0\check\gamma_0 F(\check\gamma_0))} \int \frac{t^2}{D(t)}\nu(dt)
\end{align}
where 
\begin{align*}
D(t)=\left(\check\gamma_0\rho_0+\frac{(1-\rho_0)\check\gamma_0F(\check\gamma_0)}{1-\rho_0+\rho_0\check\gamma_0 F(\check{\gamma}_0)}t\right)\left(\check\gamma\rho+\frac{(1-\rho)\check\gamma F(\check\gamma)}{1-\rho+\rho \check\gamma F(\check{\gamma})}t\right)>0.
\end{align*}
Note now that $\check\gamma(\rho)\leq \rho^{-1}\limsup_N\Vert C_N\Vert$ and, on a small neighborhood of $\rho_0\in(0,1]$, $\check\gamma=\check\gamma(\rho)$ is uniformly away from zero. Indeed, if this were not the case, on some subsequence $\rho_k\to\rho_0$ such that $\check\gamma(\rho_k)\to 0$, the definition of $\check\gamma$ would imply 
\begin{align*}
	1=\int \frac{t}{\check\gamma(\rho_k) \rho_k+\frac{1-\rho}{(1-\rho_k)c+F(\check\gamma(\rho_k))}}\nu(dt)\to 0
\end{align*}
which is a contradiction. This implies as a consequence that $F(\check\gamma)$ is bounded on a neighborhood of $\rho_0$. All this implies that all terms proportional to $\rho_0-\rho$ in \eqref{eq:continuity_check} tend to zero as $\rho\to\rho_0$, so that, in the limit $\rho\to\rho_0$,
\begin{align*}
	(\check\gamma_0-\check\gamma)\rho_0 \int \frac{t\nu(dt)}{D(t)} + \frac{(1-\rho)(1-\rho_0)(\check\gamma_0 F(\check\gamma_0)-\check\gamma F(\check\gamma))}{(1-\rho+\rho\check\gamma F(\check\gamma))(1-\rho_0+\rho_0\check\gamma_0 F(\check\gamma_0))} \int \frac{t^2\nu(dt)}{D(t)} &\to 0.
\end{align*}
But, since $x\mapsto xF(x)$ is increasing, $\check\gamma_0 F(\check\gamma_0)-\check\gamma F(\check\gamma)$ is of the same sign as $\check\gamma_0-\check\gamma$. As $D(t)$ is uniformly bounded for $\rho$ in a small neighborhood of $\rho_0$, this induces $\check\gamma_0-\check\gamma\to 0$, which concludes the proof of continuity.

\subsubsection{Main proof}

Let us now work on the matrix $\check{B}_N$. From the definition of $\check{C}_N$,
\begin{align*}
	\check{B}_N &= \frac{1-\rho}{\frac1N\tr \check{B}_N} \frac1n\sum_{i=1}^n \frac{z_iz_i^*}{\frac1Nz_i^*\check{B}_N^{-1}z_i} + \rho I_N.
\end{align*}

Denoting $\check{B}_{(i)}=\check{B}_N-\frac{1-\rho}{\frac1N\tr \check{B}_N}\frac1n\frac{z_iz_i^*}{\frac1Nz_i^*\check{B}_N^{-1}z_i}$ and using again $(A+txx^*)^{-1}x=A^{-1}x/(1+tx^*A^{-1}x)$, we have this time
\begin{align*}
	\frac1Nz_i^*\check{B}_N^{-1}z_i &= \frac{\frac1Nz_i^*\check{B}_{(i)}^{-1}z_i}{1+ (1-\rho)c_N\frac{\frac1Nz_i^*\check{B}_{(i)}^{-1}z_i}{\frac1Nz_i^*\check{B}_N^{-1}z_i} \frac1{\frac1N\tr \check{B}_N} }
\end{align*}
so that
\begin{align}
	\label{eq:BN_Bi}
	\frac1Nz_i^*\check{B}_N^{-1} z_i &= \frac1Nz_i^*\check{B}_{(i)}^{-1}z_i \left( 1 - c_N(1-\rho) \frac1{\frac1N\tr \check{B}_N} \right).
\end{align}
From the positivity of both quadratic forms above, this implies in particular that $\frac1N\tr \check{B}_N-c(1-\rho)>0$.

Replacing the quadratic forms $\frac1Nz_i^*\check{B}_N^{-1} z_i$ in the expression of $\check{B}_N$, we can now rewrite $\check{B}_N$ as
\begin{align}
	\label{eq:B}
	\check{B}_N &= \frac{1-\rho}{\frac1N\tr \check{B}_N - c_N(1-\rho)} \frac1n\sum_{i=1}^n \frac{z_iz_i^*}{\frac1Nz_i^*\check{B}_{(i)}^{-1}z_i} + \rho I_N.
\end{align}

Denote now $\check{d}_i\triangleq \frac1Nz_i^*\check{B}_{(i)}^{-1}z_i$ and assume, up to relabeling, that $\check{d}_1\leq \ldots \leq \check{d}_n$ for all $n$. Then, with the definition of $\check{B}_{(i)}$, we have
\begin{align*}
	\check{d}_n &= \frac1Nz_n^* \left( \frac{1-\rho}{\frac1N\tr \check{B}_N - c_N(1-\rho)} \frac1n\sum_{i=1}^{n-1} \frac{z_iz_i^*}{\check{d}_i} + \rho I_N \right)^{-1}z_n \\
	&\leq \frac1Nz_n^* \left( \frac{1-\rho}{\frac1N\tr \check{B}_N - c_N(1-\rho)} \frac1n\sum_{i=1}^{n-1} \frac{z_iz_i^*}{\check{d}_n} + \rho I_N \right)^{-1}z_n \\
	&= \frac{\frac1N\tr \check{B}_N - c_N(1-\rho)}{1-\rho} \frac1Nz_n^* \left( \frac1n\sum_{i=1}^{n-1} \frac{z_iz_i^*}{\check{d}_n} + \rho \frac{\frac1N\tr \check{B}_N - c_N(1-\rho)}{1-\rho} I_N \right)^{-1}z_n
\end{align*}
where the inequality follows from the initial quadratic form being increasing when seen as a function of $\check{d}_i$ for each $i$. 
This can be equivalently written
\begin{align}
	\label{eq:1leqb}
	1 &\leq \frac{\frac1N\tr \check{B}_N - c_N(1-\rho)}{1-\rho} \frac1Nz_n^* \left( \frac1n\sum_{i=1}^{n-1} z_iz_i^* + \check{d}_n\rho \frac{\frac1N\tr \check{B}_N - c_N(1-\rho)}{1-\rho} I_N \right)^{-1}z_n.
\end{align}

At this point, it is convenient to express \eqref{eq:1leqb} as a function of $F_N$ defined in \eqref{eq:FN}. From \eqref{eq:B}, note indeed that
\begin{align*}
	\frac1N\tr \check{B}_N &= \frac{1-\rho}{\frac1N\tr \check{B}_N - c_N(1-\rho)} \frac1n\sum_{i=1}^n\frac{\frac1N\Vert z_i\Vert^2}{\check{d}_i} + \rho
\end{align*}
so that, since $\frac1N\tr \check{B}_N - c_N(1-\rho)>0$,
\begin{align}
	\label{eq:B_FN}
	\frac1N\tr \check{B}_N - c_N(1-\rho) &= F_N\left( \left[ \frac1n\sum_{i=1}^{n}\frac{\frac1N\Vert z_i\Vert^2}{\check{d}_i} \right]^{-1} \right).
\end{align}
Since $F_N$ is decreasing, the term on the right-hand side is decreasing in $\check{d}_i$ for each $i$. Hence
\begin{align*}
	F_N\left( \left[ \frac1n\sum_{i=1}^{n}\frac{\frac1N\Vert z_i\Vert^2}{\check{d}_i} \right]^{-1} \right) &\geq F_N\left( \check{d}_n \left[  \frac1n\sum_{i=1}^{n}\frac1N\Vert z_i\Vert^2 \right]^{-1} \right).
\end{align*}

This implies, returning to \eqref{eq:1leqb}
\begin{align}
	\label{eq:1leqc}
	1 &\leq \frac1{1-\rho} F_N\left( \left[ \frac1n\sum_{i=1}^{n}\frac{\frac1N\Vert z_i\Vert^2}{\check{d}_i} \right]^{-1} \right) \nonumber \\
	&\times \frac1Nz_n^* \left( \frac1n\sum_{i=1}^{n-1} z_iz_i^* + \check{d}_n \frac{\rho}{1-\rho} F_N\left( \check{d}_n \left[  \frac1n\sum_{i=1}^{n}\frac1N\Vert z_i\Vert^2 \right]^{-1} \right) I_N \right)^{-1}z_n.
\end{align}

With this, similar to the proof of Theorem~\ref{th:Chitour}, we will now show via a contradiction argument that $\sup_{\rho\in\check{\mathcal R}_\varepsilon}\max_{1\leq i\leq n}|\check{d}_i(\rho)-\check{\gamma}(\rho)|\asto 0$. 
Let us then assume that, on a sequence $\{\rho_n\}_{n=1}^\infty$, $\check{d}_n=\check{d}_n(\rho_n) > {\check{\gamma}}(\rho_n) + \ell=\check{\gamma}+\ell$ infinitely often, for some $\ell>0$, and let us consider a subsequence on which $\rho_n\to \rho_0\in\check{\mathcal R}_\varepsilon$ and $\check{d}_n(\rho_n)>{\check{\gamma}}(\rho_n)+\ell$. Then, from the fact that $H_N(x)=xF_N(x)$ is increasing for $x>0$, we have
\begin{align}
	\label{eq:1_leq_gamma_ell}
	1 &\leq \frac1{1-\rho} F_N\left( \left[ \frac1n\sum_{i=1}^{n}\frac{\frac1N\Vert z_i\Vert^2}{\check{d}_i} \right]^{-1} \right) \nonumber \\
	&\times \frac1Nz_n^* \left( \frac1n\sum_{i=1}^{n-1} z_iz_i^* + \frac{({\check{\gamma}}+\ell)\rho}{1-\rho} F_N\left( ({\check{\gamma}}+\ell) \left[  \frac1n\sum_{i=1}^{n}\frac1N\Vert z_i\Vert^2 \right]^{-1} \right) I_N \right)^{-1}z_n.
\end{align}

Assume first that $\rho_0<1$. We will deal with each factor involving $F_N$ on the right-hand side of \eqref{eq:1_leq_gamma_ell}. We start with the right-most factor.
Using $\max_{1\leq i\leq n} \{\frac1N\Vert z_i\Vert^2\}\asto 1$ since $\frac1N\tr C_N=1$ for each $N$, $\check{\gamma}(\rho_n)\to \check{\gamma}(\rho_0)$ (by continuity of $\check{\gamma}$) and also the fact that $\lim_N \inf_{\{\check{\gamma}(\rho_0)-\eta<x<\check{\gamma}(\rho_0)+\eta\}} H_N'(x)>0$ for some $\eta>0$ small (from \eqref{eq:uniform_HN'}), from classical random matrix theory results, e.g., \citep{SIL95}, we obtain, with probability one
\begin{align}
	&\lim_n \frac1Nz_n^* \left( \frac1n\sum_{i=1}^{n-1} z_iz_i^* + \frac{({\check{\gamma}}+\ell)\rho_n}{1-\rho_n} F_N\left( ({\check{\gamma}}+\ell) \left[  \frac1n\sum_{i=1}^{n}\frac1N\Vert z_i\Vert^2 \right]^{-1} \right) I_N \right)^{-1}z_n \nonumber \\
	&<\lim_n \frac1Nz_n^* \left( \frac1n\sum_{i=1}^{n-1} z_iz_i^* + \frac{{\check{\gamma}}\rho_n}{1-\rho_n} F_N\left( {\check{\gamma}} \left[  \frac1n\sum_{i=1}^{n}\frac1N\Vert z_i\Vert^2 \right]^{-1} \right) I_N \right)^{-1}z_n \label{eq:strict_ineq} \\
	&= \delta \nonumber
\end{align}
where $\delta$ is the unique positive solution to
\begin{align*}
	\delta &= \int \frac{t}{\frac{\rho_0{\check{\gamma}(\rho_0)} F({\check{\gamma}}(\rho_0))}{1-\rho_0}+\frac{t}{1+c \delta}}\nu(dt).
\end{align*}
Note here the fundamental importance of having $H_N'$ uniformly positive in a neighborhood of $\check{\gamma}(\rho_0)$ to ensure the inequality sign in \eqref{eq:strict_ineq} remains strict when passing to the limit over $n$.
We will now show that $e\triangleq \frac{F({\check{\gamma}}(\rho_0))}{1-\rho_0}\delta=1$. Indeed, from the above equation,
\begin{align*}
	e &= \int \frac{t}{ \rho_0{\check{\gamma}(\rho_0)}+\frac{(1-\rho_0)t}{F({\check{\gamma}(\rho_0)})+(1-\rho_0)c e}}\nu(dt)
\end{align*}
or equivalently
\begin{align}
	\label{eq:1_of_e}
	1 &= \int \frac{t}{e\rho_0{\check{\gamma}(\rho_0)}+\frac{(1-\rho_0)te}{F({\check{\gamma}(\rho_0)})+(1-\rho_0)c e}}\nu(dt).
\end{align}
The right-hand side of \eqref{eq:1_of_e} is a decreasing function of $e$ with limits $\infty$ as $e\to 0$ and $0$ as $e\to\infty$. As an equation of $e$, \eqref{eq:1_of_e} therefore has a unique positive solution which happens to be $1$ by definition of ${\check{\gamma}}(\rho_0)$ in the theorem statement. Therefore, $e=1$.

Now consider the leading factor involving $F_N$ in \eqref{eq:1_leq_gamma_ell}. We will show that this factor is uniformly bounded. For this, proceeding similarly as above with $\check{d}_1$ instead of $\check{d}_n$, note that \eqref{eq:1leqc}, with $\rho=\rho_n$, becomes (this is obtained by reverting all inequality signs in the preceding derivations)
\begin{align}
	\label{eq:1leqd}
	1 &\geq \frac1{1-\rho_n} F_N\left( \left[ \frac1n\sum_{i=1}^{n}\frac{\frac1N\Vert z_i\Vert^2}{\check{d}_i} \right]^{-1} \right) \nonumber \\
	&\times\frac1Nz_1^* \left( \frac1n\sum_{i=1}^{n-1} z_iz_i^* + \check{d}_1 \frac{\rho_n}{1-\rho_n} F_N\left( \check{d}_1 \left[  \frac1n\sum_{i=1}^{n}\frac1N\Vert z_i\Vert^2 \right]^{-1} \right) I_N \right)^{-1}z_1.
\end{align}
Assume $\frac1n\sum_{i=1}^n\frac{\frac1N\Vert z_i\Vert^2}{\check{d}_i}\to \infty$ on some subsequence (of probability one) over which $\max_i \frac1N\Vert z_i\Vert^2\to 1$. In particular $\check{d}_1\to 0$. Then, from the limiting values taken by $F_N$ and $H_N$, the quadratic form in \eqref{eq:1leqd} has positive limit (even infinite if $c>1$) while the first term on the right-hand side tends to infinity. This contradicts \eqref{eq:1leqd} altogether and therefore $\limsup_n\frac1n\sum_{i=1}^n\frac{\frac1N\Vert z_i\Vert^2}{\check{d}_i}<\infty$.

Since in addition $\check{d}_i\leq \rho_n^{-1} \frac1N\Vert z_i\Vert^2$ (using $\Vert (A+\rho_n I_N)^{-1}\Vert\leq \rho_n^{-1}$ for nonnegative Hermitian $A$) is uniformly bounded a.s. for all large $n$, it follows that $\frac1n\sum_{i=1}^n\frac{\frac1N\Vert z_i\Vert^2}{\check{d}_i}$ is uniformly bounded and bounded away from zero. This implies that $F_N\left( \left[ \frac1n\sum_{i=1}^{n}\frac{\frac1N\Vert z_i\Vert^2}{\check{d}_i} \right]^{-1} \right)$ is uniformly bounded, as desired.

Getting back to \eqref{eq:1_leq_gamma_ell} with $\rho=\rho_n$, we can therefore extract a further subsequence on which the latter converges to $F^\infty$ and $\check{d}_1$ converges to $\check{d}_1^\infty$ ($\check{d}_1^\infty$ can be zero) and we then have along this subsequence
\begin{align}
	\label{eq:cruz_equation}
	1 &< \frac{F^\infty}{1-\rho_0} \delta = \frac{F^\infty}{F({\check{\gamma}}(\rho_0))}
\end{align}
with the equality arising from $F(\check{\gamma}(\rho_0))\delta=1-\rho_0$.

Since $F_N$ is increasing,
\begin{align*}
	F_N\left( \left[ \frac1n\sum_{i=1}^{n}\frac{\frac1N\Vert z_i\Vert^2}{\check{d}_i} \right]^{-1} \right) &\leq F_N\left( \check{d}_i \left[ \frac1n\sum_{i=1}^n \frac1N\Vert z_i\Vert^2 \right]^{-1}\right)
\end{align*}
so that, taking the limit over $n$, $F^\infty\leq F(\check{d}_1^\infty)$ (set equal to $\infty$ if $\check{d}_1^\infty=0$). This further implies
\begin{align*}
	F({\check{\gamma}}(\rho_0)) &< F(\check{d}_1^\infty)
\end{align*}
so that, if $\check{d}_1^\infty>0$, inverting the above inequality, gives $\check{d}_1^\infty<{\check{\gamma}}(\rho_0)$. Obviously, if $\check{d}_1^\infty=0$, this is still true. Therefore $\check{d}_1(\rho_n)<{\check{\gamma}}(\rho_0)-\ell'$ infinitely often for some $\ell'>0$ along the considered subsequence.

Conserving the same subsequence and reproducing the same steps for the sequence $\check{d}_1(\rho_n)$ instead of $\check{d}_n(\rho_n)$ (from \eqref{eq:1leqd}, use $\check{d}_1(\rho_n)<{\check{\gamma}}(\rho_n)-\ell'$ infinitely often and the growth of $H_N$ similar to before), we obtain this time
\begin{align*}
	1 &> \frac{F^\infty}{F({\check{\gamma}}(\rho_0))}
\end{align*}
which contradicts \eqref{eq:cruz_equation}. 

Assume now $\rho_0=1$. 
Starting from \eqref{eq:1leqb} with $\rho=\rho_n$ and the expression of $F_N$, we have
\begin{align*}
	1 &\leq \limsup_N F_N\left( \left[ \frac1n\sum_{i=1}^n \frac{\frac1N\Vert z_i\Vert^2}{\check{d}_i} \right]^{-1} \right) \nonumber \\ 
	&\times \frac1Nz_n^* \left( (1-\rho_n) \frac1n\sum_{i=1}^{n-1} z_iz_i^* + \check{d}_n\rho_n F_N\left( \left[ \frac1n\sum_{i=1}^n \frac{\frac1N\Vert z_i\Vert^2}{\check{d}_i} \right]^{-1} \right)  I_N \right)^{-1}z_n \\
	&\leq \limsup_N F_N\left( \left[ \frac1n\sum_{i=1}^n \frac{\frac1N\Vert z_i\Vert^2}{\check{d}_i} \right]^{-1} \right) \nonumber \\ 
	&\times \frac1Nz_n^* \left( (1-\rho_n) \frac1n\sum_{i=1}^{n-1} z_iz_i^* + (\check{\gamma}+\ell)\rho_n F_N\left( \left[ \frac1n\sum_{i=1}^n \frac{\frac1N\Vert z_i\Vert^2}{\check{d}_i} \right]^{-1} \right)  I_N \right)^{-1}z_n \\
	&= \frac1{\check{\gamma}(\rho_0)+\ell}
\end{align*}
since $\rho_n\to \rho_0=1$, since $\frac1n\sum_{i=1}^n \frac{\frac1N\Vert z_i\Vert^2}{\check{d}_i}$ is uniformly away from zero (as shown previously), and since $\limsup_n\Vert \frac1n\sum_{i=1}^nz_iz_i^*\Vert<\infty$ \citep{SIL98}. But then, the fact that $\check{\gamma}(\rho_0)=1$ by definition along with the above relation leads to $1\leq 1/(1+\ell)$, again a contradiction.

Therefore, gathering the results, our very initial hypothesis that there exists a subsequence of $n$ and $\rho_n$ over which $\check{d}_n(\rho_n)>\gamma(\rho_n)+\ell$ infinitely often is invalid and we conclude that, instead, $\sup_{\rho \in \check{\mathcal R}_\varepsilon}\check{d}_n(\rho)-{\check{\gamma}}(\rho)\leq \ell$ for all large $n$ a.s.

The same procedure works similarly when starting over with $\check{d}_1$ and assuming with the same contradiction argument that $\check{d}_1(\rho'_n)<{\check{\gamma}}(\rho'_n)-\ell$ infinitely often on some sequence $\rho'_n$. Taking a subsequence over which $\rho'_n\to \rho'_0$, this will imply this time that $\check{d}_n(\rho'_0)>{\check{\gamma}}(\rho'_0)+\ell'$ for some $\ell'>0$ for all large $n$ a.s. which we now know is invalid. 

Gathering the results, we finally obtain
\begin{align}
	\label{eq:conv_checkd}
	\sup_{\rho\in\check{\mathcal R}_\varepsilon}\max_{1\leq i\leq n} |\check{d}_i(\rho)-{\check{\gamma}}(\rho)| \asto 0
\end{align}
as desired. 
This implies from \eqref{eq:B_FN} that
\begin{align*}
	\sup_{\rho\in\check{\mathcal R}_\varepsilon} \left|\frac1N\tr \check{B}_N - c(1-\rho) - F({\check{\gamma}}(\rho)) \right| \asto 0
\end{align*}
with $\inf_{\rho\in\check{\mathcal R}_\varepsilon}F({\check{\gamma}}(\rho))>0$ so that, from \eqref{eq:B}, Assumption~\ref{ass:x}--\ref{item:C}, and \citep{SIL98},
\begin{align*}
	\sup_{\rho\in\check{\mathcal R}_\varepsilon} \left\Vert \check{B}_N - \left[\frac{1-\rho}{F({\check{\gamma}(\rho)}){\check{\gamma}(\rho)}}\frac1n\sum_{i=1}^n z_iz_i^* + \rho I_N\right] \right\Vert \asto 0.
\end{align*}
Dividing the expression inside the norm by $\frac1N\tr \check{B}_N$ and taking the limit finally gives
\begin{align*}
	\sup_{\rho\in\check{\mathcal R}_\varepsilon} \left\Vert \check{C}_N - \left[\frac{1-\rho}{\rho F({\check{\gamma}}){\check{\gamma}} + (1-\rho)}\frac1n\sum_{i=1}^n z_iz_i^* + \frac{\rho {\check{\gamma}} F({\check{\gamma}})}{\rho{\check{\gamma}} F({\check{\gamma}}) + (1-\rho) } I_N\right] \right\Vert \asto 0
\end{align*}
with $\check{\gamma}=\check{\gamma}(\rho)$, which is the expected result.

\subsection{Proof of Corollary~\ref{cor:joint}}
\label{sec:proof_joint}
	We only give the proof for $\hat{C}_N(\rho)$. Similar arguments hold for $\check{C}_N(\rho)$. The joint eigenvalue convergence is an application of \citep[Theorem~4.3.7]{HOR85} on the spectral norm convergence of Theorems~\ref{th:Chitour} and \ref{th:Wiesel}. The norm boundedness results from $\sup_{\rho\in \hat{\mathcal R}_\varepsilon} | \Vert \hat{C}_N(\rho) \Vert - \Vert \hat{S}_N(\rho)\Vert |\asto 0$ and from $\limsup_N\sup_{\rho\in \hat{\mathcal R}_\varepsilon} \Vert \hat{S}_N(\rho)\Vert <\infty$ by an application of \citep{SIL98}.
	The joint convergence of moments over $\hat{\mathcal R}_\varepsilon$ follows first from the convergence $\hat{m}_N(z;\rho) - m_{\hat{\mu}_\rho}(z)\asto 0$ for each $z$ with $\Im[z]>0$ and for each $\rho\in\hat{\mathcal R}_\varepsilon$  where $m_N(z;\rho)=\frac1N\tr( (\hat{S}_N(\rho) - z I_N)^{-1})$ (as a consequence of Corollary~\ref{cor:limit}). Since this holds for each such $z$, the almost sure convergence is also valid uniformly on a countable set of $z$ with $\Im[z]>0$ having a limit point away from the union $\mathcal U$ over $\rho\in\hat{\mathcal R}_\varepsilon$ of the limiting spectra of $\hat{S}_N(\rho)$, $\mathcal U$ being a bounded set since $\limsup_N \sup_{\rho\in\hat{\mathcal R}_\varepsilon} \Vert \hat{S}_N(\rho)\Vert<\infty$. But then, since
	\begin{align*}
		\frac{(1-\rho)m_N(z;\rho)}{\hat{\gamma}(\rho)(1-(1-\rho)c)} &= \frac1N\tr \left[\left( \frac1n\sum_{i=1}^n z_iz_i^* + \frac{\rho-z}{1-\rho}\hat{\gamma}(\rho)(1-(1-\rho)c) I_N\right)^{-1}\right]
	\end{align*}
	is analytic in $\hat{z}(\rho)=\frac{\rho-z}{1-\rho}\hat{\gamma}(\rho)(1-(1-\rho)c)$ and bounded on all bounded regions away from $\mathcal U$, by Vitali's convergence theorem \citep{TIT39}, the convergence $\hat{m}_N(z;\rho) - m_{\hat{\mu}_\rho}(z)\asto 0$ is uniform on such bounded sets of $(z,\rho)$. Using the Cauchy integrals $\oint z^km_N(z;\rho)dz=\frac1N\tr (\hat{S}_N(\rho)^\ell)$ and $\oint z^km_{\hat{\mu}_\rho}(z)dz=M_{\hat{\mu}_\rho,k}$ for each $k\in\NN$ on a contour that circles around (but sufficiently away from) $\mathcal U$ implies $\sup_{\rho\in\hat{\mathcal R}_\varepsilon} |\frac1N\tr (\hat{S}_N(\rho)^\ell) - M_{\hat{\mu}_\rho,\ell}|\asto 0$, from which the result unfolds.

\subsection{Proof of Lemma~\ref{lem:equivalent}}
\label{sec:proof_lemma}
We start with $\hat{S}_N$. Remark first that, for $\rho\in (\max\{0,1-c^{-1}\},1]$,
\begin{align*}
	\frac{\hat{S}_N(\rho)}{M_{\hat\mu_{\rho},1}} &= \left(1-\frac{\rho}{\frac1{\hat\gamma(\rho)}\frac{1-\rho}{1-(1-\rho)c}+\rho}\right) \frac1n\sum_{i=1}^n z_iz_i^* + \frac{\rho}{\frac1{\hat\gamma(\rho)}\frac{1-\rho}{1-(1-\rho)c}+\rho}I_N.
\end{align*}
Denoting
\begin{align*}
\hat f:(\max\{0,1-c^{-1}\},1] &\to (0,1] \\
	\rho&\mapsto \frac{\rho}{\frac1{\hat\gamma(\rho)} \frac{1-\rho}{1-(1-\rho)c} + \rho} = \frac1{\frac1{\rho\hat\gamma(\rho)} \frac{1-\rho}{1-(1-\rho)c} + 1}
\end{align*}
we have $\frac{\hat{S}_N(\rho)}{M_{\hat\mu_{\rho},1}}=(1-\hat{f}(\rho))\frac1n\sum_{i=1}^n z_iz_i^* + \hat{f}(\rho)I_N$ and it therefore suffices to show that $\hat f$ is continuously increasing and onto. The continuity of $\hat f$ unfolds immediately from the continuity of $\hat\gamma$. By the definition of $\hat\gamma$, the function $\rho\mapsto \rho\hat\gamma(\rho)$ is increasing and nonnegative (since $\nu$ is distinct from ${\bm\delta}_0$ almost everywhere) while $\rho\mapsto \frac{1-\rho}{1-(1-\rho)c}$ is decreasing and nonnegative. Therefore, $\hat f$ is increasing and nonnegative. It remains to show that $\hat f$ is onto. Clearly $\hat f(1)=1$ since $\hat\gamma(1)=M_{\nu,1}=1$. 
To handle the lower limit, let us rewrite
\begin{align*}
	\hat f(\rho) &=  \frac{\rho\hat{\gamma}(\rho)(1-(1-\rho)c)}{ 1-\rho+\rho\hat{\gamma}(\rho)(1-(1-\rho)c)}
\end{align*}
which we aim to show approaches zero as $\rho\downarrow \max\{0,1-c^{-1}\}$. For this, assume $\rho_k\hat{\gamma}(\rho_k)(1-(1-\rho_k)c)\to \ell\in (0,\infty]$ for a sequence $\rho_k\downarrow \max\{0,1-c^{-1}\}$. Then, from the defining equation of $\hat{\gamma}(\rho)$ in Theorem~\ref{th:Chitour},
\begin{align*}
	1 &= \int \frac{(1-(1-\rho_k)c)t}{\rho_k\hat{\gamma}(\rho_k)(1-(1-\rho_k)c)+(1-\rho_k)(1-(1-\rho_k)c)t}\nu(dt) \\
	&\leq \frac{(1-(1-\rho_k)c) \limsup_N \Vert C_N\Vert}{\rho_k\hat{\gamma}(\rho_k)(1-(1-\rho_k)c)+(1-\rho_k)(1-(1-\rho_k)c)\limsup_N \Vert C_N\Vert} \\
	&\to \frac{\lim_k (1-(1-\rho_k)c) \limsup_N \Vert C_N\Vert}{\ell + \lim_k (1-\rho_k) (1-(1-\rho_k)c) \limsup_N \Vert C_N\Vert} \\
	&< 1
\end{align*}
since the limit is either zero (when $c\geq 1$) or $(1-c)\limsup_N \Vert C_N\Vert/(\ell+(1-c)\limsup_N \Vert C_N\Vert)<1$ (when $c<1$). But this is a contradiction. This implies that $\rho\hat{\gamma}(\rho)(1-(1-\rho)c)\to 0$ and consequently $\hat f(\rho)\to 0$ as $\rho\downarrow \max\{0,1-c^{-1}\}$, which completes the proof for $\hat S(\rho)$.

Similarly, for $\check S(\rho)$, define
\begin{align*}
\check{f} : (0,1] &\to (0,1] \\
\rho &\mapsto \frac{T_\rho}{1-\rho+T_{\rho}}
\end{align*}
where we recall that $T_{\rho}=\rho\check{\gamma}(\rho)F(\check{\gamma}(\rho);\rho)$ and which is such that $\check{S}_N(\rho)=(1-\check{f}(\rho))\frac1n\sum_{i=1}^nz_iz_i^*+\check{f}(\rho)I_N$. We will show that $\check{f}$ is continuously increasing and onto. The continuity arises from the continuity of $\check\gamma$. We first show that $\check\gamma$ is onto. For the upper limit, $\check f(1)=1$. For the lower limit, assume $T_{\rho_k}\to \ell \in (0,\infty]$ over a sequence $\rho_k\to 0$, so that in particular $T_{\rho_k}\rho_k^{-1}\to \infty$. Then, by the definition of $\check{\gamma}(\rho)$ and since $F(x;\rho)=(1-\rho)\frac1{xF(x;\rho)}+\rho-c(1-\rho)$,
\begin{align*}
	1 &= \int \frac{1}{\check{\gamma}(\rho_k)\rho_k t^{-1} + T_{\rho_k} \rho_k^{-1}\frac{1-\rho_k}{1-\rho_k + T_{\rho_k}}}\nu(dt) \to 0
\end{align*}
by dominated convergence (recall that $\nu$ has bounded support), which is a contradiction. This implies $\check f(\rho)\to 0$ as $\rho\to 0$. It remains to show that $\check{f}$ is increasing. For this, we will rewrite the equation defining $\check\gamma(\rho)$ as a function of $\check{f}(\rho)$. Using again $F(x;\rho)=(1-\rho)\frac1{xF(x;\rho)}+\rho-c(1-\rho)$, we first have, for each $t\geq 0$,
\begin{align*}
	\check\gamma(\rho)\rho + \frac{1-\rho}{(1-\rho)c + F(\check\gamma(\rho);\rho)}t &= \check\gamma(\rho)\rho + \frac{1-\rho}{(1-\rho)\frac1{\check\gamma(\rho)F(\check\gamma(\rho);\rho)}+\rho}t \\
	&= \check\gamma(\rho)\rho + \frac{(1-\rho)\check\gamma(\rho)F(\check\gamma(\rho);\rho)}{1-\rho+\rho\check\gamma(\rho)F(\check\gamma(\rho);\rho)}t \\
	&= \frac{\rho\check\gamma(\rho)F(\check\gamma(\rho);\rho)}{F(\check\gamma(\rho);\rho)} + \frac{1-\rho}{\rho} \check{f}(\rho)t \\
	&= \frac1{F(\check\gamma(\rho);\rho)} \frac{(1-\rho)\check{f}(\rho)}{1-\check{f}(\rho)} + \frac{1-\rho}{\rho} \check{f}(\rho)t
\end{align*}
where in the last equality we used $(1-\rho)\check{f}(\rho)=(1-\check{f}(\rho))\rho\check{\gamma}(\rho)F(\check{\gamma}(\rho);\rho)$. We now work on $F(\check\gamma(\rho);\rho)$. By its implicit definition,
\begin{align*}
	\frac1{F(\check\gamma(\rho);\rho)} &= \frac1{(1-\rho)\frac1{\check\gamma(\rho)F(\check\gamma(\rho);\rho)}+\rho-c(1-\rho)} \\
	&= \frac{\rho \check\gamma(\rho)F(\check\gamma(\rho);\rho)}{\rho(1-\rho)+\rho^2\check\gamma(\rho)F(\check\gamma(\rho);\rho)-c(1-\rho)\rho\check\gamma(\rho)F(\check\gamma(\rho);\rho)} \\
	&= \frac{(1-\rho)\check{f}(\rho)}{1-\check{f}(\rho)}\frac1{\rho(1-\rho)+\rho \frac{(1-\rho)\check{f}(\rho)}{1-\check{f}(\rho)}-c(1-\rho)\frac{(1-\rho)\check{f}(\rho)}{1-\check{f}(\rho)}} \\
	&= \frac{\check{f}(\rho)}{\rho-c(1-\rho)\check{f}(\rho)}
\end{align*}
where the last equation follows from standard algebraic simplification. Note here in particular that, by positivity of $F(x;\rho)$ for $x>0$, $\rho-c(1-\rho)\check{f}(\rho)>0$. Plugging the two results above in the defining equation for $\check{\gamma}(\rho)$, we obtain
\begin{align}
	\label{eq:1_of_f}
	1 &= \int \frac{t}{\frac{\check{f}(\rho)}{\rho-c(1-\rho)\check{f}(\rho)}\frac{(1-\rho)\check{f}(\rho)}{\rho(1-\check{f}(\rho))} + \frac{1-\rho}{\rho}\check{f}(\rho)t }\nu(dt).
\end{align}
Now assume that $\check{f}(\rho)$ is decreasing on an open neighborhood of $\rho_0\in(0,1)$. Then $\rho\mapsto \frac{1-\rho}{\rho}\check{f}(\rho)$ and $\rho \mapsto \frac{(1-\rho)\check{f}(\rho)}{\rho(1-\check{f}(\rho))}$ are also decreasing. This follows from the fact that, on this neighborhood, $\rho\mapsto (1-\rho)/\rho=1/\rho-1$, $\rho\mapsto 1-\rho$, and $\rho\mapsto\check{f}(\rho)/(1-\check{f}(\rho))=-1+1/(1-\check{f}(\rho))$ are all positive decreasing functions of $\rho$. Finally,
\begin{align*}
	\frac{\check{f}(\rho)}{\rho-c(1-\rho)\check{f(\rho)}} &= \frac1{\frac{\rho}{\check{f}(\rho)}+c(\rho-1)}
\end{align*}
which is also positive decreasing, since $\rho\mapsto\rho/\check{f}(\rho)$ and $\rho\mapsto c(\rho-1)$ are both increasing and of positive sum. But then, the right-hand side of \eqref{eq:1_of_f} is increasing on a neighborhood of $\rho_0$ while being constant equal to one, which is a contradiction. Therefore, our initial assumption that $\check{f}(\rho)$ is locally decreasing around $\rho_0$ does not hold, and therefore $\check{f}(\rho)$ is increasing there and thus increasing on $(0,1]$. This completes the proof.

\subsection{Proof of Proposition~\ref{prop:shrink}}
\label{sec:proof_optimal}
	We only prove the result for $\hat{C}_N$, the treatment for $\check{C}_N$ being the same. First observe that, denoting $A_N(\hat\rho)=\frac{\hat{C}_N(\hat\rho)}{\frac1N\tr \hat{C}_N(\hat\rho)} - \frac{\hat{S}_N(\hat\rho)}{M_{\hat\mu_{\hat\rho},1}}$,
	\begin{align*}
		&\sup_{\hat\rho\in \hat{\mathcal R}_\varepsilon} \left| \hat{D}_N(\hat\rho) - \frac1N\tr\left( \left( \frac{\hat{S}_N(\hat\rho)}{M_{\hat\mu_{\hat\rho},1}} - C_N \right)^2\right) \right| \\
		&=\sup_{\hat\rho\in \hat{\mathcal R}_\varepsilon} \left| \frac1N\tr \left(A_N(\hat\rho) \left[ \frac{\hat{C}_N(\hat\rho)}{\frac1N\tr \hat{C}_N(\hat\rho)} + \frac{\hat{S}_N(\hat\rho)}{M_{\hat\mu_{\hat\rho},1}} - 2 C_N\right] \right)\right| \\
	&\leq \sup_{\hat\rho\in \hat{\mathcal R}_\varepsilon} \left\{ 2\left| \frac1N\tr (A_N(\hat\rho)C_N) \right| + \left| \frac1N\tr \left(A_N(\hat\rho) \left[ \frac{\hat{C}_N(\hat\rho)}{\frac1N\tr \hat{C}_N(\hat\rho)} + \frac{\hat S_N(\hat\rho)}{M_{\hat\mu_{\hat\rho},1}}  \right]\right) \right|  \right\} \\
		&\leq \sup_{\hat\rho\in \hat{\mathcal R}_\varepsilon} \left\Vert A_N(\hat\rho)\right\Vert \sup_{\hat\rho\in \hat{\mathcal R}_\varepsilon} \left( 3 + \frac{\frac1N\tr\hat S_N(\hat\rho)}{M_{\hat\mu_{\hat\rho},1}} \right)
	\end{align*} 
	where we used $|\tr (AB)|\leq \tr A \Vert B\Vert$ for nonnegative definite $A$ along with $\frac1N\tr C_N=1$. Now, 
	\begin{align*}
		\sup_{\hat\rho\in \hat{\mathcal R}_\varepsilon} \left\Vert A_N(\hat\rho)\right\Vert &\leq  \frac{\sup_{\hat\rho\in \hat{\mathcal R}_\varepsilon} M_{\hat\mu_{\hat\rho},1} \sup_{\hat\rho\in \hat{\mathcal R}_\varepsilon} \Vert \hat{C}_N(\hat\rho)-\hat{S}_N(\hat\rho) \Vert}{\inf_{\hat\rho\in \hat{\mathcal R}_\varepsilon} \frac1N\tr\hat{C}_N(\hat\rho)M_{\hat\mu_{\hat\rho},1} } \nonumber \\
		&+ \frac{ \sup_{\hat\rho\in \hat{\mathcal R}_\varepsilon}  \Vert \hat{S}_N(\hat\rho)\Vert \sup_{\hat\rho\in \hat{\mathcal R}_\varepsilon} \left| \frac1N\tr\hat{C}_N(\hat\rho) - M_{\hat\mu_{\hat\rho},1} \right| }{\inf_{\hat\rho\in \hat{\mathcal R}_\varepsilon} M_{\hat\mu_{\hat\rho},1}\frac1N\tr\left(\hat{C}_N(\hat\rho)\right) }.
	\end{align*}
	Since $M_{\hat\mu_{\hat\rho},1}=\frac1{\hat\gamma(\hat\rho)}\frac{1-\hat\rho}{1-(1-\hat\rho)c}$ is uniformly bounded across $\hat\rho \in \hat{\mathcal R}_\varepsilon$, this finally implies from Theorem~\ref{th:Chitour} and Corollary~\ref{cor:joint} that both right-hand side terms tend almost surely to zero in the large $N,n$ limit (in particular since the denominators are bounded away from zero), and finally
	\begin{align*}
		\sup_{\hat\rho\in \hat{\mathcal R}_\varepsilon} \left| \hat{D}_N(\hat\rho) - \frac1N\tr \left[\left( \frac{\hat{S}_N(\hat\rho)}{M_{\hat\mu_{\hat\rho},1}} - C_N \right)^2\right] \right| &\asto 0.
	\end{align*}

Moreover, from Lemma~\ref{lem:equivalent}, for each $\hat\rho\in(\max\{0,1-c^{-1}\},1]$,
	\begin{align*}
		\frac1N\tr\left[ \left( \frac{\hat{S}_N(\hat\rho)}{M_{\hat\mu_{\hat\rho},1}} - C_N \right)^2\right] = \frac1N\tr\left[ \left( \bar{S}_N(\rho) - C_N \right)^2\right]
	\end{align*}
with $\rho=\hat{\rho}(\frac1{\hat\gamma(\hat\rho)}\frac{1-\hat\rho}{1-(1-\hat\rho)c}+\hat\rho)^{-1}\in (0,1]$ and with $\bar{S}_N=(1-\rho)\frac1n\sum_{i=1}^nz_iz_i^*+\rho I_N$. Also, using $\frac1N\tr \left(\frac1n\sum_{i=1}^nz_iz_i^*\right)\asto M_{\nu,1}=1$, $\frac1N\tr \left[\left(\frac1n\sum_{i=1}^nz_iz_i^*\right)^2\right]\asto M_{\nu,2}+c$, and basic arithmetic derivations
	\begin{align*}
		\sup_{\rho\in [0,1]} \left| \frac1N\tr\left[ \left( \bar{S}_N(\rho) - C_N \right)^2\right] - \bar{D}(\rho) \right| &\asto 0
	\end{align*}
	where 
	\begin{align*}
		\bar{D}(\rho) &= (M_{\nu,2}-1)\rho^2+c(1-\rho)^2.
	\end{align*}
	Note importantly that, from the Cauchy-Schwarz inequality, $1=M_{\nu,1}^2\leq M_{\nu,2}$ and therefore $M_{\nu,2}-1\geq 0$ with equality if and only if $\nu={\bm\delta}_a$ for some $a\geq 0$ almost everywhere.
	From the above convergence, we then have, for any $\varepsilon>0$ small,
	\begin{align}
		\label{eq:sup_rho0}
		\sup_{\hat\rho\in \hat{\mathcal R}_\varepsilon} \left| \hat{D}_N(\hat\rho) - \bar{D}(\rho)\right| &\asto 0.
	\end{align}

	Now, call $\rho^\star$ the minimizer of $\bar{D}(\rho)$ over $[0,1]$. It is easily verified that $\rho^\star\in(0,1]$ is as defined in the theorem. Also denote $\hat\rho^\star$ the unique value such that $\rho^\star={\hat{\rho}^\star}(\frac1{\hat\gamma(\hat\rho^\star)}\frac{1-\hat\rho^\star}{1-(1-\hat\rho^\star)c}+\hat\rho^\star)^{-1}$, which is well defined according to Lemma~\ref{lem:equivalent}. Call also $\hat\rho^\circ_N$ the minimizer of $\hat{D}_N(\hat\rho)$ over $\hat{\mathcal R}_\varepsilon$ and $\rho^\circ_N=\hat{\rho}^\circ_N(\frac1{\hat\gamma(\hat\rho^\circ_N)}\frac{1-\hat\rho^\circ_N}{1-(1-\hat\rho^\circ_N)c}+\hat\rho^\circ_N)^{-1}$. If $\varepsilon$ is as given in the theorem statement, $\hat{\rho}^\star\in \hat{\mathcal R}_\varepsilon$ and then
	\begin{align*}
		\bar{D}(\rho^\star) &\leq \bar{D}(\rho^\circ_N) \\
		\hat{D}_N(\hat\rho^\circ_N) &\leq \hat{D}_N(\hat\rho^\star) \\
		\hat{D}_N(\hat\rho^\star) - \bar{D}(\rho^\star) &\asto 0 \\
		\hat{D}_N(\hat\rho^\circ_N) - \bar{D}(\rho^\circ_N) &\asto 0
	\end{align*}
	the last two equations following from \eqref{eq:sup_rho0} (the joint convergence in \eqref{eq:sup_rho0} is fundamental since $\rho^\circ_N$ and $\hat\rho^\circ_N$ are not constant with $N$). These four relations together ensure that
	\begin{align*}
		\hat{D}_N(\hat\rho^\circ_N) - \bar{D}(\rho^\star) &\asto 0 \\
		\hat{D}_N(\hat\rho^\circ_N) - \hat D_N(\hat\rho^\star) &\asto 0.
	\end{align*}
	These and the fact that $\bar{D}(\rho^\star)=D^\star$ as defined in the theorem statement conclude the proof of the first part of the theorem.

	For the second part, denoting $\rho_N={\hat{\rho}_N}(\frac1{\hat\gamma(\hat\rho_N)}\frac{1-\hat\rho_N}{1-(1-\hat\rho_N)c}+\hat\rho_N)^{-1}$, we have that $\bar{D}(\rho_N)-\bar{D}(\rho^\star)\asto 0$ by continuity of $\bar{D}$ since $\rho_N\asto \rho^\star$ and therefore, since $\hat{D}_N(\hat\rho_N)-\bar{D}(\rho_N)\asto 0$ by \eqref{eq:sup_rho0}, $\hat{D}_N(\hat\rho_N)-\bar{D}(\rho^\star)\asto 0$ which is the expected result.

\subsection{Proof of Proposition~\ref{prop:optimal_shrink}}
\label{sec:proof_optimal_shrink}
	We first show the following identities
	\begin{align}
		\frac1n\tr\left[ \left( \frac1n\sum_{i=1}^n \frac{x_ix_i^*}{\frac1N\Vert x_i\Vert^2} \right)^2\right] - c_N  &\asto M_{\nu,2} \label{eq:est_3} \\
%		\sup_{\rho\in \hat{\mathcal R}_\varepsilon}\left|\left[\frac1{\hat\gamma(\rho)}\frac{1-\rho}{1-(1-\rho)c}+\rho\right]-\frac1N\tr \hat{C}_N(\rho)\right| &\asto 0 \label{eq:est_1}\\
		\sup_{\rho\in \check{\mathcal R}_\varepsilon} \left| T_{\rho} - \rho \frac1n \sum_{i=1}^n \frac{x_i^*\check{C}_N(\rho)^{-1}x_i}{\Vert x_i\Vert^2} \right| &\asto 0 \label{eq:est_2}.
\end{align}
Identity \eqref{eq:est_3} unfolds from $\frac1n\tr \left[( \frac1n\sum_{i=1}^n z_iz_i^*)^2\right]\asto M_{\nu,2}+cM_{\nu,1}^2=M_{\nu,2}+c$ and from $\max_{1\leq i\leq n}|\frac1N\Vert z_i\Vert^2-1|\asto 0$. As for Equation \eqref{eq:est_2}, it is a consequence of the elements of the proof of Theorem~\ref{th:Wiesel}. Indeed, from \eqref{eq:BN_Bi},
\begin{align*}
	\rho \frac1Nx_i^* \check{C}_N(\rho)^{-1}x_i &=\rho\frac1Nx_i^* \check{B}_{(i)}(\rho)^{-1}x_i\left(\frac1N\tr\check{B}_N(\rho)-c_N(1-\rho)\right)
\end{align*}
where $\check{B}_{(i)}(\rho)=\check{B}_N(\rho)-\frac1n\frac{1-\rho}{\frac1N\tr \check{B}_N}\frac{x_ix_i^*}{\frac1Nx_i^*\check{B}_N(\rho)^{-1}x_i}$, which according to \eqref{eq:B_FN} further reads
\begin{align*}
	\rho \frac1Nx_i^* \check{C}_N(\rho)^{-1}x_i &=\rho\frac1Nx_i^* \check{B}_{(i)}(\rho)^{-1}x_iF_N\left(\left[\frac1{n}\sum_{i=1}^n \frac{\Vert x_i\Vert^2}{\frac1Nx_i^*\check{B}_{(i)}(\rho)^{-1}x_i} \right]^{-1};\rho\right)
\end{align*}
with $F_N(x;\rho)$ the same function as $F$ but with $c_N$ in place of $c$ (recall that in \eqref{eq:B_FN}, $\check{d}_i=\frac1Nz_i^*\check{B}_{(i)}(\rho)^{-1}z_i$). Since the $\tau_i$ normalization is irrelevant in the expression above, $x_i$ can be replaced by $z_i$. Using the convergence result \eqref{eq:conv_checkd} and the continuity and boundedness of $x\mapsto xF_N(x)$, we then have
\begin{align*}
	\sup_{\rho\in\check{\mathcal R}_\varepsilon} \max_{1\leq i\leq n} \left| \rho \frac1Nz_i^* \check{C}_N(\rho)^{-1}z_i - \rho\check{\gamma}(\rho)F(\check{\gamma}(\rho);\rho) \right| \asto 0.
\end{align*}
As a consequence,
\begin{align*}
	&\sup_{\rho\in\check{\mathcal R}_\varepsilon} \left| \rho \frac1n\sum_{i=1}^n \frac1Nz_i^* \check{C}_N(\rho)^{-1}z_i - \rho\check{\gamma}(\rho)F(\check{\gamma}(\rho);\rho) \right| \nonumber \\
	&\leq \sup_{\rho\in\check{\mathcal R}_\varepsilon} \max_{1\leq i\leq n} \left| \rho \frac1Nz_i^* \check{C}_N(\rho)^{-1}z_i - \rho\check{\gamma}(\rho)F(\check{\gamma}(\rho);\rho) \right| \\
	&\asto 0.
\end{align*}
This, and the fact that $\max_{1\leq i\leq n}|\frac1N\Vert z_i\Vert^2 - 1| \asto 0$ gives the result.

It remains to prove that $\hat\rho_N\asto \hat\rho^\star$ and $\check\rho_N\asto \check\rho^\star$. We only prove the first convergence, the second one unfolding along the same lines. First observe from Corollary~\ref{cor:joint} that the defining equation of $\hat\rho_N$ implies
\begin{align*}
	\hat{f}(\hat\rho_N)&= \frac{c}{M_{\nu,2}+c-1} + \ell_n
\end{align*}
for some sequence $\ell_n\asto 0$, with $\hat f:x\mapsto x(\frac1{\hat\gamma(x)}\frac{1-x}{1-(1-x)c}+x))^{-1}$. Since $\hat f$ is a one-to-one growing map from $(\max\{0,1-c^{-1}\},1]$ onto $(0,1]$ (Lemma~\ref{lem:equivalent}) and $\frac{c}{M_{\nu,2}+c-1}\in (0,1)$, such a $\hat\rho_N$ exists (not necessarily uniquely though) for all large $N$ almost surely. Taking such a $\rho_N$, by definition of $\hat\rho^\star$, we further have
\begin{align*}
	\hat{f}(\hat\rho_N) - \hat f(\hat\rho^\star) &\asto 0
\end{align*}
which, by the continuous growth of $\hat f$, ensures that $\hat\rho_N\asto \hat\rho^\star$. The convergence $\hat D_N(\hat\rho_N)\asto D^\star$ is then an application of Proposition~\ref{prop:shrink}.

\bibliographystyle{elsarticle-harv}
\bibliography{/home/romano/Documents/PhD/phd-group/papers/rcouillet/tutorial_RMT/book_final/IEEEabrv.bib,/home/romano/Documents/PhD/phd-group/papers/rcouillet/tutorial_RMT/book_final/IEEEconf.bib,/home/romano/Documents/PhD/phd-group/papers/rcouillet/tutorial_RMT/book_final/tutorial_RMT.bib,/home/romano/Documents/work/papers/robust_est_elliptic/robust_est.bib}

\begin{thebibliography}{32}
\expandafter\ifx\csname natexlab\endcsname\relax\def\natexlab#1{#1}\fi
\expandafter\ifx\csname url\endcsname\relax
  \def\url#1{\texttt{#1}}\fi
\expandafter\ifx\csname urlprefix\endcsname\relax\def\urlprefix{URL }\fi

\bibitem[{Abramovich and Spencer(2007)}]{ABR07}
Abramovich, Y.~I., Spencer, N.~K., 2007. Diagonally loaded normalised sample
  matrix inversion (lnsmi) for outlier-resistant adaptive filtering. In:
  Proceedings of {IEEE} International Conference on Acoustics, Speech and
  Signal Processing (ICASSP'07). Vol.~3. IEEE, pp. III--1105.

\bibitem[{Bai and Silverstein(1998)}]{SIL98}
Bai, Z.~D., Silverstein, J.~W., 1998. {No eigenvalues outside the support of
  the limiting spectral distribution of large dimensional sample covariance
  matrices}. The Annals of Probability 26~(1), 316--345.

\bibitem[{Bai and Silverstein(2009)}]{SIL06}
Bai, Z.~D., Silverstein, J.~W., 2009. {Spectral analysis of large dimensional
  random matrices}, 2nd Edition. Springer Series in Statistics, New York, NY,
  USA.

\bibitem[{Chen et~al.(2011)Chen, Wiesel, and Hero}]{CHE11}
Chen, Y., Wiesel, A., Hero, A.~O., 2011. {Robust shrinkage estimation of
  high-dimensional covariance matrices}. {IEEE} Transactions on Signal
  Processing 59~(9), 4097--4107.

\bibitem[{Couillet et~al.(2013{\natexlab{a}})Couillet, Pascal, and
  Silverstein}]{COU13}
Couillet, R., Pascal, F., Silverstein, J.~W., 2013{\natexlab{a}}. {Robust
  Estimates of Covariance Matrices in the Large Dimensional Regime}. {IEEE}
  Transactions on Information Theory.
\newline\urlprefix\url{http://arxiv.org/abs/1204.5320}

\bibitem[{Couillet et~al.(2013{\natexlab{b}})Couillet, Pascal, and
  Silverstein}]{COU13b}
Couillet, R., Pascal, F., Silverstein, J.~W., 2013{\natexlab{b}}. {The random
  matrix regime of Maronna's M-estimator with elliptically distributed
  samples}. Journal of Multivariate Analysis.
\newline\urlprefix\url{http://arxiv.org/abs/1311.7034}

\bibitem[{Dahirel et~al.(2011)Dahirel, Shekhar, Pereyra, Miura, Artyomov,
  Talsania, Allen, Altfeld, Carrington, Irvine, et~al.}]{DAH11}
Dahirel, V., Shekhar, K., Pereyra, F., Miura, T., Artyomov, M., Talsania, S.,
  Allen, T.~M., Altfeld, M., Carrington, M., Irvine, D.~J., et~al., 2011.
  {Coordinate linkage of HIV evolution reveals regions of immunological
  vulnerability}. Proceedings of the National Academy of Sciences 108~(28),
  11530--11535.

\bibitem[{Horn and Johnson(1985)}]{HOR85}
Horn, R.~A., Johnson, C.~R., 1985. {Matrix Analysis}. Cambridge University
  Press.

\bibitem[{Huber(1964)}]{HUB64}
Huber, P.~J., 1964. Robust estimation of a location parameter. The Annals of
  Mathematical Statistics 35~(1), 73--101.

\bibitem[{Huber(1981)}]{HUB81}
Huber, P.~J., 1981. Robust Statistics. Wiley Series in Probability and
  Statistics. John Wiley \& Sons.

\bibitem[{Kent and Tyler(1991)}]{KEN91}
Kent, J.~T., Tyler, D.~E., 1991. {Redescending M-estimates of multivariate
  location and scatter}. The Annals of Statistics, 2102--2119.

\bibitem[{Laloux et~al.(2000)Laloux, Cizeau, Potters, and Bouchaud}]{POT00}
Laloux, L., Cizeau, P., Potters, M., Bouchaud, J.~P., Jul. 2000. {Random matrix
  theory and financial correlations}. International Journal of Theoretical and
  Applied Finance 3~(3), 391--397.

\bibitem[{Ledoit and Wolf(2003)}]{LED03}
Ledoit, O., Wolf, M., 2003. Improved estimation of the covariance matrix of
  stock returns with an application to portfolio selection. Journal of
  Empirical Finance 10~(5), 603--621.

\bibitem[{Ledoit and Wolf(2004)}]{LED04}
Ledoit, O., Wolf, M., 2004. A well-conditioned estimator for large-dimensional
  covariance matrices. Journal of Multivariate Analysis 88~(2), 365--411.

\bibitem[{Maronna(1976)}]{MAR76}
Maronna, R.~A., 1976. {Robust M-estimators of multivariate location and
  scatter}. The annals of statistics, 51--67.

\bibitem[{Maronna et~al.(2006)Maronna, Martin, and Yohai}]{MAR06}
Maronna, R.~A., Martin, D.~R., Yohai, J.~V., 2006. Robust Statistics: Theory
  and Methods. Wiley Series in Probability and Statistics. John Wiley \& Sons.

\bibitem[{Mar\u{c}enko and Pastur(1967)}]{MAR67}
Mar\u{c}enko, V.~A., Pastur, L.~A., 1967. {Distribution of eigenvalues for some
  sets of random matrices}. Math USSR-Sbornik 1~(4), 457--483.

\bibitem[{Mestre(2008{\natexlab{a}})}]{MES08b}
Mestre, X., Nov. 2008{\natexlab{a}}. {Improved estimation of eigenvalues of
  covariance matrices and their associated subspaces using their sample
  estimates}. {IEEE} Transactions on Information Theory 54~(11), 5113--5129.

\bibitem[{Mestre(2008{\natexlab{b}})}]{MES08}
Mestre, X., Nov. 2008{\natexlab{b}}. {On the asymptotic behavior of the sample
  estimates of eigenvalues and eigenvectors of covariance matrices}. {IEEE}
  Transactions on Signal Processing 56~(11), 5353--5368.

\bibitem[{Mestre and Lagunas(2008)}]{MES08c}
Mestre, X., Lagunas, M., Feb. 2008. {Modified subspace algorithms for DoA
  estimation with large arrays}. {IEEE} Transactions on Signal Processing
  56~(2), 598--614.

\bibitem[{Pascal et~al.(2013)Pascal, Chitour, and Quek}]{PAS13}
Pascal, F., Chitour, Y., Quek, Y., 2013. {Generalized robust shrinkage
  estimator -- Application to STAP data}. Submitted for publication.
\newline\urlprefix\url{http://arxiv.org/pdf/1311.6567}

\bibitem[{Quadeer et~al.(2013)Quadeer, Louie, Shekhar, Chakraborty, Hsing, and
  McKay}]{QUA13}
Quadeer, A.~A., Louie, R. H.~Y., Shekhar, K., Chakraborty, A.~K., Hsing, I.-M.,
  McKay, M.~R., 2013. {Statistical linkage of mutations in the non-structural
  proteins of hepatitis C virus exposes targets for immunogen design}.
  submitted to Journal of Virology.

\bibitem[{Rubio et~al.(2012)Rubio, Mestre, and Palomar}]{RUB12}
Rubio, F., Mestre, X., Palomar, D.~P., 2012. Performance analysis and optimal
  selection of large minimum variance portfolios under estimation risk.
  Selected Topics in Signal Processing, IEEE Journal of 6~(4), 337--350.

\bibitem[{Sch{\"a}fer and Strimmer(2005)}]{SCH05}
Sch{\"a}fer, J., Strimmer, K., 2005. {A shrinkage approach to large-scale
  covariance matrix estimation and implications for functional genomics}.
  Statistical Applications in Genetics and Molecular Biology 4~(1), 32.

\bibitem[{Scharf(1991)}]{SCH91}
Scharf, L., 1991. {Statistical Signal Processing: Detection, Estimation and
  Time-Series Analysis}. Addison-Wesley, Boston, MA, USA.

\bibitem[{Silverstein and Bai(1995)}]{SIL95}
Silverstein, J.~W., Bai, Z.~D., 1995. {On the empirical distribution of
  eigenvalues of a class of large dimensional random matrices}. Journal of
  Multivariate Analysis 54~(2), 175--192.

\bibitem[{Silverstein and Choi(1995)}]{CHO95}
Silverstein, J.~W., Choi, S., 1995. {Analysis of the limiting spectral
  distribution of large dimensional random matrices}. Journal of Multivariate
  Analysis 54~(2), 295--309.

\bibitem[{Steiger(1980)}]{STE80}
Steiger, J.~H., 1980. Tests for comparing elements of a correlation matrix.
  Psychological Bulletin 87~(2), 245.

\bibitem[{Titchmarsh(1939)}]{TIT39}
Titchmarsh, E.~C., 1939. {The Theory of Functions}. Oxford University Press,
  New York, NY, USA.

\bibitem[{Tyler(1987)}]{TYL87}
Tyler, D.~E., 1987. {A distribution-free M-estimator of multivariate scatter}.
  The Annals of Statistics 15~(1), 234--251.

\bibitem[{Yu et~al.(2013)Yu, McKay, and Rubio}]{JIA13}
Yu, J., McKay, M.~R., Rubio, F., 2013. {Minimum variance portfolio optimisation
  with high frequency data: A robust approach based on random matrix theory}.
  Submitted for publication.

\bibitem[{Zhang et~al.(2014)Zhang, Cheng, and Singer}]{ZHA14}
Zhang, T., Cheng, X., Singer, A., 2014. {Marchenko-Pastur Law for Tyler's and
  Maronna's M-estimators}. http://arxiv.org/abs/1401.3424.

\end{thebibliography}

\end{document}